\documentclass{amsproc}

\usepackage{amssymb}
\usepackage{tikz}

\usepackage{graphicx}
\usepackage{amsmath}
\usepackage{subfigure}
\usepackage{calc}
\usepackage{float}

\newtheorem{theorem}{Theorem}[section]
\newtheorem{lemma}{Lemma}[section]
\newtheorem{proposition}{Proposition}[section]
\newtheorem{remark}{Remark}[section]
\newtheorem{corollary}{Corollary}[section]
\newtheorem{definition}{Definition}[section]

\def\Z{\mathbb{Z}}
\def\R{\mathbb{R}}
\def\C{\mathbb{C}}

\def\N{\mathbb{N}}
\def\F{\mathcal{F}}
\def\L{\mathrm{L}}

\def\0{{\bf{0}}}

\def\vx{{\vec{\xi}}}
\def\vt{{\vec{t}}}
\def\vs{{\vec{s}}}

\def\-{{\mbox{\tiny $ - $ }}}
\def\+{{\mbox{\tiny $ + $ }}}

\def\0{{\mbox{\tiny $ (0) $ }}}
\def\1{{\mbox{\tiny $ (1) $ }}}
\def\2{{\mbox{\tiny $ (2) $ }}}

\newcommand{\dotcup}{\ensuremath{\mathaccent\cdot\cup}}
\newcommand{\bigdotcup}{\bigcup \! \! \! \! \!  \cdot \ \ }

\numberwithin{equation}{section}

\usepackage{lipsum}

\newcommand\blfootnote[1]{
  \begingroup
  \renewcommand\thefootnote{}\footnote{#1}%
  \addtocounter{footnote}{-1}%
  \endgroup
}

\usepackage{pstricks}
\definecolor{gray7}{rgb}{0.7,0.7,0.7}
\definecolor{gray8}{rgb}{0.8,0.8,0.8}
\definecolor{gray9}{rgb}{0.9,0.9,0.9}


\begin{document}

\allowdisplaybreaks

   \begin{center}
      \Large\textbf{Equations For Parseval's Frame Wavelets In $L^2(\R^d)$\\  With Compact Supports}
   \end{center}

\vspace{.5cm}

\begin{center}
      \large\textit{Xingde Dai}
\end{center}

\bigskip

\begin{center}Abstract\end{center}

\begin{quotation}

{\small Let $d\geq 1$ be a natural number and $A_0$ be a $d\times d$ expansive integral matrix with determinant $\pm 2.$
Then $A_0$ is integrally similar to an integral matrix $A$ with certain additional properties.
A finite solution to the system of equations  associated with the matrix $A$ will result in an iterated sequence $\{\Psi^k \chi_{[0,1)^d}\}$ that converges to a function $\varphi_A$ in $L^2(\R^d)$-norm. With this (scaling) function $\varphi_A,$ we will construct the Parseval's wavelet function $\psi$ with compact support associated with matrix $A_0.$
}

\end{quotation}

      \address{The University of North Carolina at Charlotte}
      \email{xdai@uncc.edu}

\blfootnote{2010 Mathematics Subject  Classification. Primary 46N99, 47N99, 46E99; Secondary 42C40, 65T60.}

{\bf Keywords:} Hilbert space $L^2(\R^d)$, expansive integral matrix, Parseval's frame wavelet, compact support.\\

\bigskip

\section{Introduction}

\bigskip

In this paper, $d$ is a natural and $d\geq 1$, $\R$ is the real numbers and $\C$ is the complex numbers.  $\R^d$ will be the $d$-dimensional  real Euclidean space and
and $\C^d$  will be the $d$-dimensional complex  Euclidean space.
$\R^d$ is a subset of $\C^d.$ Let $\{\vec{e}_j,j=1,\cdots,d\}$ be the standard basis.
For two vectors $\vec{x}=\sum_{j=1} ^d x_j \vec{e}_j$  and $\vec{y}=\sum_{j=1} ^d y_j \vec{e}_j,$ the inner product of $\vec{x}$ and $\vec{y}$ is  $\vec{x} \circ \vec{y} \equiv \sum_{j=1} ^d x_j \bar{y}_j.$
For a vector $\vx\in\C^d,$
its real part $\mathfrak{Re}(\vx)$ and imaginary part $\mathfrak{Im}(\vx)$ are
vectors in $\R^d$  with $\vx = \mathfrak{Re} (\vx) + i \mathfrak{Im} (\vx).$
The measure $\mu$ will be the Lebesgue measure on $\R^d$
and $L^2(\R^d)$ will be the Hilbert space of all square integrable functions on $\R^d$.
A countable set of elements $\{\psi_i: i\in \Lambda\}$ in $L^2(\R^d)$ is called a \emph{normalized tight frame} of $L^2(\R^d)$ if
\begin{equation}
\label{eq:frame}
   \sum_{i\in\Lambda} |\langle f, \psi_i \rangle |^2 = \|f\|^2,
\ \forall f\in L^2(\R^d).
\end{equation}
It is well known in the literature \cite{christensen} that the Equation \eqref{eq:frame} is equivalent to
\begin{equation}
\label{eq:frameeq}
   f = \sum_{i\in\Lambda} \langle f, \psi_i \rangle \psi_i ,
\ \forall f\in L^2(\R^d),
\end{equation}
where the convergence is in $L^2(\R^d)$-norm.  For a  vector $\vec{\ell}\in\R^d,$
the \textit{translation operator} $T_{\vec{\ell}}$ is defined as
\begin{eqnarray*}
    (T_{\vec{\ell}} f)(\vec{t}) &\equiv& f(\vec{t}-\vec{\ell}), ~ \forall f\in L^2(\R^d).\\
\end{eqnarray*}
Let $A$ be a $d \times d$ integral matrix with eighenvalues $\{\beta_1,\cdots \beta_d\}$. $A$ is called \textit{expansive} if $\min \{|\beta_j|, j\leq d\} >1$. The norm of the linear transformation $A$ on $\R^d$ (or $\C^d$) will be $\|A\|\equiv \max\{|\beta_j|,j=1,\cdots,d\}.$
For two
vectors $\vec{t}_1,\vec{t}_2$ in the Euclidean space $\C^d,$  we have $\vec{t}_1\circ A \vec{t}_2 = A^\tau \vec{t}_1 \circ \vec{t}_2,$ where $A^\tau$ is the transpose matrix of the \textit{real} matrix $A$. $A^\tau$ is expansive iff $A$ is.
We define operator $U_{A}$ as
\begin{eqnarray*}
    (U_A f)(\vec{t}) &\equiv& (\sqrt{|\det(A)|}) f(A\vec{t}),~\forall f \in L^2(\R^d).
\end{eqnarray*}
This is a unitary operator. In particular, for an expansive matrix $A$ with $|\det(A)|=2,$ we will use
$D_A$ for $ U_A$ and call it the \textit{dilation operator associated with $A$}.
The integral lattice $\Z^d$ is an Abelian group under vector addition. The subset $(2\Z)^d$ is a subgroup. For a fixed $d\times d$ integral matrix $A$ with $|\det(A)| =2,$
the two sets $A \Z^d$ and $A^\tau \Z^d$ are proper subgroups of $\Z^d$ containing $(2\Z)^d$. An integral matrix $B$ is said \textit{integrally similar} to another integral matrix $C$ by matrix $S$ if
$C=SB S^{-1} $ for an integral matrix $S$ with $S^{-1}$ being also integral.

\begin{definition}
\label{def:defpsi0}
Let $A$ be an expansive integral matrix  with $|\det(A)|=2.$
A function $\psi\in L^2(\R^d)$ is called a Parseval's frame wavelet associated with $A$, if the set
\begin{equation}
    \{ D_A ^n T_{\vec{\ell}}\psi,  n\in\Z,\vec{\ell}\in\Z^d\}
\end{equation}
constitutes a normalized tight frame of $L^2(\R^d).$
\end{definition}
\begin{remark}
The function $\psi$ is also called single function normalized tight frame wavelet.
A Parseval's frame wavelet is not necessarily a unit vector in $L^2(\R^d)$ unless it is an orthonormal  wavelet. By definition an element $\psi\in L^2(\R^d)$ is
a Parseval's frame wavelet if and only if
\begin{equation}
\label{eq:framew}
    \|f\|^2 = \sum_{n\in\Z , \vec{\ell} \in \Z^d} |\langle f,D_A ^n T_{\vec{\ell}}\psi \rangle |^2, \ \forall f\in L^2(\R^d).
\end{equation}
By Equation \eqref{eq:frameeq} this is equivalent to
\begin{equation}
\label{eq:frameweq}
    f = \sum_{n\in\Z , \vec{\ell} \in \Z^d} \langle f,D_A ^n T_{\vec{\ell}}\psi \rangle D_A ^n T_{\vec{\ell}}\psi , \   \forall f\in L^2(\R^d).
\end{equation}
\end{remark}

In this paper, we will prove that an expansive integral matrix $A_0$ with determinant value $\pm 2$ is integrally similar to an integral matrix $A$ with certain additional properties.
We will prove that a finite solution to the system of equations \eqref{eq:lawtoneq} associated with the matrix $A$ will result in an iterated sequence $\{\Psi^k \chi_{[0,1)^d}\}$ that converges to a function $\varphi_A.$ With this  function $\varphi_A,$ we will construct the Parseval's wavelet function $\psi$ with compact support associated with matrix $A_0.$

The original ideas of the above construction were due to W. Lawton on the one dimensional model \cite{lawton} in 1990.
People use the ideas, however, there is no paper clearly state and prove the above constructions in general $d$ dimensional case, the generalization of Lawtton's results into $L^2(\R^d).$
This paper give a clear formulation together with a rigorous proof.

In \cite{dai2d} we state and prove the above construction in the case $L^2(\R^2)$, the two dimensional case.
It is natural that in some reasoning in this paper which is for general cases $d\geq 1$
we use the similar ideas for the corresponding conclusions in a previous paper \cite{dai2d}.
In order to control the size of this paper and also to make this paper readable, we placed the proofs to some eleven statements into the Appendix. The 10 page Appendix includes proof of
Theorem \ref{theom:redsym}; proofs of Propositions  \ref{prop:filter}, \ref{prop:compactsupport}, \ref{prop:telleskope}, \ref{prop:go0} and \ref{prop:isnorm};
proofs of Lemmas \ref{lem:LV}, \ref{lem:see0}, \ref{lem:gconverge}, \ref{lem:ineq} and \ref{lem:identitylj}.
We have to note that the above mentioned proofs are different and improved somehow than corresponding parts in
\cite{dai2d}.

In the one dimensional case, the $1 \times 1$ dilation matrix $2$ is simple: Its transpose is itself; it acts on the integer lattice $\Z$ will result in an simple sublattice $2\Z$, the even integers; and the integer lattice $\Z$ has a partition of even and odd numbers. However, for a $d \times d$ integral matrix $A$, these things have been changed. For example, $A\Z^d$ and $A^\tau \Z^d$ can be different sublattices of $\Z^d.$ The desired property $A\Z^d=A^\tau \Z^d$ will make the situation relatively simpler when our discussions are in  the frequency domain and the time domain at same time. We formulated the properties of matrices and related lattices we will need in higher dimensional cases into the Partition
Theorem (Theorem \ref{theom:that}).
These properties have been cited in some key Propositions and their proofs (Propositions \ref{prop:filter}, \ref{prop:l2phi}, \ref{prop:telleskope}).
In particular, we need these properties when we define the single function Parseval's frame wavelet (Definition \ref{def:psi}).
Sections \ref{ss:Matrices} and \ref{ss:s2} are devoted to prepare and prove this Partition Theorem.

The next fundamentally important issue is to show that the scaling function $\varphi_A$ is in $L^2(\R^d)$. Due to the variety of the matrices we have to cover, we insert technical Lemmas (Lemmas \ref{lem:sesami}, \ref{lem:A}, \ref{lem:C}) and Proposition \ref{prop:l2phi} in Section \ref{ss:phil2}.
We will follow the classical method for constructing such frame wavelets as provided by I. Daubechies in \cite{dau}. That is, from the filter function $m_0$ to the scaling function $\varphi$. This is outlined in Section \ref{ss:mainresults} Steps (C$_1$)-(C$_3$).  Then we construct the
wavelet function $\psi$. To construct the filter function $m_0$ we start with the system of equations \eqref{eq:lawtoneq} which is a direct generalization of
W. Lawton's system of equations \cite{lawton} for frame wavelets in $L^2(\R)$.
In Section \ref{ss:examples}, we provide two examples to show that the constructions we proved in this paper does produce Parseval's frame wavelets, even orthonormal wavelets.

The literature  of wavelet theory in higher dimensions is rich. Many authors provide significant contributions to the theory.
It is hard to make a short list. However, the author must cite the following names and their papers.

Q. Gu and D. Han \cite{guhan} proved that, if an integral expansive matrix is associated with single function \textit{orthogonal wavelets} with \textit{multi-resolution analysis} (MRA), then the matrix determinant must be $\pm 2.$ Orthogonal wavelets are special single function Parseval's frame wavelets.

The existence of Haar type orthonormal   wavelets (hence with compact support) in $L^2(\R^2)$ was proved by J. Lagarias and
Y. Wang in \cite{wangyang2}.
The first examples of such functions with compact support and with properties of
high smoothness in  $L^2(\R^2)$ and $L^2(\R^3)$ were provided by E. Belogay and Y. Wang
in  \cite{wangyang}.
Compare with \cite{wangyang2} and \cite{wangyang} our methods appear to be more constructive. Also, it provides variety for single function Parseval wavelets which includes the orthogonal wavelets. The methods in this paper provide a wide base in searching for more frame wavelets with normal properties as wavelets in \cite{wangyang}.

The scaling function $\varphi$ in this paper is not necessarily orthogonal. So the wavelet system constructed fits the definition of the
\textit{frame multi-resolution analysis} (FMRA) by   J. Benedetto and S. Li in \cite{benedetto} and  it also fits the definition of the \textit{general multi-resolution analysis} (GMRA) by
L. Baggett, H. Medina and K.  Merrill in \cite{baggett}.

\bigskip

\section{Main Results}\label{ss:mainresults}

Let $A_0$ be a $d\times d$ expansive integral matrix with $|\det (A_0)|=2.$ We will construct Parseval's frame wavelets associated with $A_0$ in the following steps (A)-(E).\\

(A) Find a $d\times d$ integral matrix $A$ (Partition Theorem) which is integrally similar to $A_0$  with the following properties,
\begin{enumerate}
  \item
  \begin{equation*}
  S ^{-1} A S = A_0,
  \end{equation*}
  where $S$ is an integral matrix with $|\det (S)|=1.$
  \item
    \begin{equation*}
    A \Z^d = A^\tau \Z^d.
    \end{equation*}
  \item There exists a vector $\vec{\ell}_A\in\Z^d$ such that
\begin{equation*}
 \Z^d = (\vec{\ell}_A + A \Z^d) \dotcup A \Z^d.
\end{equation*}
  \item There exists a vector $\vec{q}_A \in \Z^d$\\
\begin{equation*}
  \vec{q}_A \circ A \Z^d \subseteq 2\Z
 \textit{ { and \ }}
        \vec{q}_A \circ (\vec{\ell}_A + A \Z^d) \subseteq 2\Z+1.
\end{equation*}
\end{enumerate}

\bigskip

(B) Solve the system of equations
\begin{equation*}
\left\{\begin{array}{l}
\sum_{\vec{n}\in\Z^d}h_{\vec{n}}\overline{h_{\vec{n}+\vec{k}}}=\delta_{\vec{0} \vec{k}},~ \vec{k}\in A\Z^d \\
\sum_{\vec{n}\in\Z^d}h_{\vec{n}}=\sqrt{2}.
\end{array}\right.
\end{equation*}
for a finite solution $\mathcal{S}=\{h_{\vec{n}}:~{\vec{n}\in\Z^d}\}.$ We say $\mathcal{S}$ is a finite solution if the index set of non-zero terms $h_{\vec{n}}$ is included in
the set $\Lambda_0 \equiv \Z^d \cap [-N_0,N_0]^d$ for some natural number $N_0.$

\bigskip

(C) Let $\Psi$ be the linear operator on $L^2(\R^d)$ by
\begin{equation*}
  \Psi \equiv \sum_{\vec{n}\in\Lambda_0} h_{\vec{n}} D_AT_{\vec{n}}.
\end{equation*}
The iterated sequence $\{\Psi^k \chi_{[0,1)^d}, k\in\N\}$ will converge to the scaling function $\varphi_A$ in the $L^2(\R^d)$-norm (Theorem \ref{theom:iterate}).

\bigskip

(D)
Define function  $\psi_A$ 
\begin{eqnarray*}
  \psi_A &\equiv& \sum_{\vec{n}\in\Z^d}
  (-1)^{\vec{q}_A \circ \vec{n}} \overline{h_{\vec{\ell}_A-\vec{n}}} D_AT_{\vec{n}} \varphi_A .
\end{eqnarray*}
This is a Parseval's frame wavelet with compact support associated with matrix $A$ (Theorem \ref{theom_frame}).

\bigskip

(E) Define the wavelet function $\psi$ by
\begin{equation*}
    \psi(\vec{t}) \equiv \psi_A(S\vec{t}),\forall \vec{t}\in\R^d.
\end{equation*}
The function $\psi$ is a Parseval's frame wavelet with compact support associated with the given matrix $A_0$ (Theorem \ref{theom:redsym}).

\bigskip

To prove the conclusion in step (C) and that $\varphi_A$ has a compact support,  we do the following steps (C$_1$)-(C$_3$).\\

(C$_1$) Define the filter function $m_0(\vt)$.
\begin{equation*}
    m_0(\vec{t})\equiv \frac{1}{\sqrt{2}}\sum_{\vec{n}\in\Lambda_0} h_{\vec{n}} e^{-i\vec{n}\circ\vec{t}}.
 \end{equation*}

\bigskip

(C$_2$) Define a function $g$ by
\begin{equation*}
 g(\vec{\xi})\equiv \frac{1}{(2\pi)^{d/2}}\prod_{j=1}^{\infty}m_0((A^{\tau})^{-j} \vec{\xi}), \forall \vec{\xi}\in\R^d
\end{equation*}
The function $g$ is an $L^2(\R^d)$-function (Proposition \ref{prop:l2phi}), and its extension is an entire function on $\C^d$  (Proposition \ref{prop:entire}).

\bigskip

(C$_3$) Define the scaling function $\varphi_A$ 
\begin{equation*}
\varphi_A \equiv \F^{-1} g,
\end{equation*}
where $\F$ is the Fourier transform.
The scaling function $\varphi_A$ is an $L^2(\R^d)$-function with compact support (Proposition \ref{prop:compactsupport}).

\bigskip

\section{Factorization of Matrices}\label{ss:Matrices}

In this section, we will prove a factorization theorem (Proposition \ref{prop:see02}) of integral matrices we discuss.
This will provide a base to prove the Partition Theorem in Section \ref{ss:s2}.
 Let $\mathfrak{M}$ be the set of $d \times d$ integral matrices with determinants $\pm 1.$
This is a group under matrix multiplication.
We will use some elementary  $d\times d$ integral  matrices. We state their properties in  Lemma~\ref{lem:prmat}.

Let $p,i,j$ be natural numbers and $ 1\leq p,i,j\leq d$.
Let $I$  be the $d\times d$ identity matrix.
Denote $\Delta_{i j}$  the $d\times d$ matrix with $1$ at $i j$ position and $0$ at all other positions.
Denote $I_{i j}$ be the matrix after interchanging the $i^{\texttt{th}}$ and the $j^{\texttt{th}}$ columns in the identity matrix $I$.
Denote $D_p \equiv I+\Delta_{p p},$  and $S_p \equiv I-2\Delta_{p p}.$

 \begin{lemma}
 \label{lem:prmat}
 Let $B$ be a $d\times d$ matrix, and $m,n,p,k,\ell,i,j$ be natural numbers in $[1,d].$
\begin{enumerate}  \item
  \begin{equation}
  \label{eq:magic}
\Delta_{m n} \cdot \Delta_{k\ell} = \delta_{n k} \Delta_{m \ell}
 \end{equation}
where $\delta$ is the Kronecker delta.
 \item If $i\neq j, $
 then
\begin{equation}
\label{eq:f2}
 (I+\Delta_{i j})^{-1} = I-\Delta_{i j}.
\end{equation}

    Right multiply $(I\pm \Delta_{i j})$ to $B$ will add the $i^{\texttt{th}}$ column to the $j^{\texttt{th}}$ in matrix $B$ (subtract the $i^{\texttt{th}}$ column from the $j^{\texttt{th}}$ column in matrix $B$).
\item $S_p =S_p ^{-1}.$
   Right multiply $S_p$ to $B$ will change the sign of the $p^{\texttt{th}}$ column.
  \item $I_{i j}   = I_{j i} = I_{i j} ^{-1}.$
   Right multiply $I_{i j}$ to $B$ will interchange the $i^{\texttt{th}}$ and the $j^{\texttt{th}}$ columns in matrix $B$.
  \item $D_p ^{-1} = I - \frac{1}{2} \Delta_{p p} \notin \mathfrak{M}.$
   Right multiply $D_p$ to $B$ will multiply the $p^{\texttt{th}}$ column in $B$ by $2$.
\end{enumerate}
\end{lemma}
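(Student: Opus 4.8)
The plan is to derive every assertion from the single multiplication rule \eqref{eq:magic}, which is itself a one-line entrywise computation: writing $(\Delta_{mn})_{ab}=\delta_{ma}\delta_{nb}$, the $(a,c)$-entry of $\Delta_{mn}\Delta_{k\ell}$ is $\sum_{b}\delta_{ma}\delta_{nb}\delta_{kb}\delta_{\ell c}=\delta_{nk}\,\delta_{ma}\delta_{\ell c}$, which is exactly $\delta_{nk}(\Delta_{m\ell})_{ac}$. In particular $\Delta_{pp}^2=\Delta_{pp}$ and $\Delta_{ij}\Delta_{ij}=\delta_{ij}\Delta_{ii}$. Every matrix appearing in the statement is a short integer combination of $I$ and matrix units --- $D_p=I+\Delta_{pp}$, $S_p=I-2\Delta_{pp}$, $I+\Delta_{ij}$, and the transposition matrix $I_{ij}=I-\Delta_{ii}-\Delta_{jj}+\Delta_{ij}+\Delta_{ji}$ --- so all the products below are evaluated by distributing and collapsing terms via \eqref{eq:magic}.

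For the inverse formulas I would simply multiply out. For (2), $(I+\Delta_{ij})(I-\Delta_{ij})=I-\Delta_{ij}\Delta_{ij}=I-\delta_{ij}\Delta_{ii}$, which equals $I$ precisely because $i\neq j$; this is the only place the hypothesis $i\neq j$ enters, and indeed for $i=j$ one has $I+\Delta_{ii}=D_i$, which is not invertible over $\Z$. Likewise $S_p^2=I-4\Delta_{pp}+4\Delta_{pp}^2=I$, and $D_pD_p^{-1}=(I+\Delta_{pp})(I-\tfrac12\Delta_{pp})=I+(1-\tfrac12-\tfrac12)\Delta_{pp}=I$; since the $(p,p)$-entry of $D_p^{-1}$ is $\tfrac12\notin\Z$, we get $D_p^{-1}\notin\mathfrak{M}$. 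For (4), the expression $I_{ij}=I-\Delta_{ii}-\Delta_{jj}+\Delta_{ij}+\Delta_{ji}$ is manifestly symmetric in $i$ and $j$, giving $I_{ij}=I_{ji}$, and $I_{ij}^2=I$ either by the same distribute-and-collapse computation or by observing that $I_{ij}$ is the permutation matrix of the transposition $(i\;j)$.

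For the column-operation descriptions I would compute $(B\Delta_{ij})_{ac}=\sum_b B_{ab}\delta_{ib}\delta_{jc}=B_{ai}\delta_{jc}$, so that right multiplication by $\Delta_{ij}$ yields the matrix whose $j$-th column is the $i$-th column of $B$ and whose remaining columns vanish. Hence $B(I\pm\Delta_{ij})=B\pm B\Delta_{ij}$ adds (resp.\ subtracts) the $i$-th column of $B$ to (from) its $j$-th column; $BS_p=B-2B\Delta_{pp}$ negates the $p$-th column; $BD_p=B+B\Delta_{pp}$ doubles it; and $BI_{ij}$, being right multiplication by the permutation matrix of $(i\;j)$, interchanges the $i$-th and $j$-th columns. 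I do not expect any real obstacle here: the lemma is pure bookkeeping, and the only points demanding a little care are to keep straight that \emph{right} multiplication acts on columns (a \emph{left} multiplication by the same matrices would act on rows instead) and to respect the hypothesis $i\neq j$ wherever it appears.
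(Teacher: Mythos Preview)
Your argument is correct in every part: the entrywise verification of \eqref{eq:magic} is clean, and the remaining items follow exactly as you say by distributing over $I$ and matrix units and collapsing via \eqref{eq:magic}. The paper itself gives no proof of this lemma at all --- it is stated as a collection of elementary facts and used without further comment --- so your write-up simply supplies the routine details the author chose to omit; there is nothing to compare approaches against.
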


An integral matrix $B$ is said \textit{integrally similar} to another integral matrix $C$ by matrix $S$ if
$C=SB S^{-1} $ for an integral matrix $S$ with $S^{-1}$ being also integral. The only possible integral matrices to serve for
$S$ are matrices in the group $\mathfrak{M}.$ We will call the matrices in the set
\begin{equation*}
\mathfrak{G} = \{S_p\}\cup
\{(I\pm \Delta_{i j}), i\neq j\} \cup
\{I_{i j}  \}
\end{equation*}
\textit{elementary matrices.} Every member in $\mathfrak{G}$ has its inverse also in $\mathfrak{G}$.
Also, we have $\mathfrak{G} \subset \mathfrak{M}.$\\

\begin{lemma}
\label{lem:LV}
Let $B$ be a $d \times d$ non singular integral matrix. Then $B$ can be factored into
\begin{equation*}
    B=LV
\end{equation*}
where $V\in \mathfrak{M}$ which is the finite product of elements from $\mathfrak{G}$ and $L$ an integral lower triangular matrix with positive diagonals $\{\ell_{11},\cdots, \ell_{dd}\}$.
\end{lemma}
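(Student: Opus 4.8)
The plan is to realize the desired factorization as the matrix form of the column-reduction algorithm on $B$, using only the elementary matrices in $\mathfrak{G}$ to perform column operations from the right, and letting $L$ absorb what remains. Concretely, I would argue by induction on $d$, peeling off the last row. Since $B$ is a nonsingular integral matrix, the entries $b_{d1},\dots,b_{dd}$ of its last row are integers, not all zero; let $g=\gcd(|b_{d1}|,\dots,|b_{dd}|)>0$. Using repeated column operations of the type $B\mapsto B(I\pm\Delta_{ij})$ — which by Lemma~\ref{lem:prmat}(2) add or subtract one column from another — one can run the Euclidean algorithm across the last row: these operations act on the last row exactly as integer column operations on the row vector $(b_{d1},\dots,b_{dd})$, so after finitely many of them the last row becomes $(0,\dots,0,g)$ (with $g$ landing in the $d$-th slot after possibly applying some $I_{ij}$ to move it there, and possibly an $S_p$ to fix the sign so that the final $(d,d)$-entry is $+g$). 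Call the accumulated product of these $\mathfrak{G}$-matrices $V_1\in\mathfrak{M}$; then $BV_1$ has last row $(0,\dots,0,\ell_{dd})$ with $\ell_{dd}=g>0$.

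Next I would clear the $(d,d)$ spot out of the other columns of the first $d-1$ rows is not needed — instead write $BV_1$ in block form: its bottom row is $(0\;\cdots\;0\;\ell_{dd})$, so $BV_1=\begin{pmatrix}B' & \ast\\ 0 & \ell_{dd}\end{pmatrix}$ where $B'$ is $(d-1)\times(d-1)$ integral and nonsingular (its determinant is $\det(BV_1)/\ell_{dd}=\pm\det B/g\neq 0$), and $\ast$ is an integral column. Apply the induction hypothesis to $B'$: there is $V'\in\mathfrak{M}_{d-1}$, a product of $(d-1)$-dimensional elementary matrices, with $B'V'=L'$ lower triangular with positive diagonal. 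Embed $V'$ as $\tilde V'=\begin{pmatrix}V'&0\\0&1\end{pmatrix}$, which is a product of $d$-dimensional $\mathfrak{G}$-matrices (each elementary generator in dimension $d-1$ extends to one in dimension $d$ fixing the last coordinate). Then $BV_1\tilde V'=\begin{pmatrix}L'&\ast\\0&\ell_{dd}\end{pmatrix}$, which is \emph{almost} lower triangular — the only offending entries are those in the last column, rows $1,\dots,d-1$. But these can be killed by right-multiplying by matrices of the form $(I+\Delta_{dj})$, $j<d$: such a multiplication adds the $d$-th column into the $j$-th column, which alters the $j$-th column only in rows $\le d-1$ where the $d$-th column is $\ast$ — hmm, this moves mass the wrong way.

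Let me instead clear the last column \emph{before} recursing: after obtaining $BV_1$ with bottom row $(0,\dots,0,\ell_{dd})$, note that subtracting suitable integer multiples of the last column from the earlier columns would require the last column to have zeros in rows $1,\dots,d-1$, which we do not have; so the cleaner route is to clear the $\ast$ block by \emph{row} operations, i.e.\ left-multiplication, which is exactly what building up $L$ means. So: keep $BV_1=\begin{pmatrix}B'&\ast\\0&\ell_{dd}\end{pmatrix}$, recurse on $B'$ to get $B'V'=L'$, set $V=V_1\tilde V'$, and then $BV=\begin{pmatrix}L'&\ast\\0&\ell_{dd}\end{pmatrix}$. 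This is block lower-triangular with a lower-triangular block $L'$ and scalar block $\ell_{dd}>0$, but the $\ast$ column sits above the $\ell_{dd}$ entry, so $BV$ is \emph{not} lower triangular. The fix is trivial, though: the matrix $\begin{pmatrix}L'&\ast\\0&\ell_{dd}\end{pmatrix}$ equals $\begin{pmatrix}I&\ast\ell_{dd}^{-1}\\0&1\end{pmatrix}\begin{pmatrix}L'&0\\0&\ell_{dd}\end{pmatrix}$, so the upper-triangular part is an obstruction — meaning my peeling direction is wrong. I should peel off the \emph{first} column (building $L$ from the top-left), or equivalently work with the first row.

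Cleanest correct version: induct by peeling the \textbf{first column}. Given nonsingular integral $B$, its first column $(b_{11},\dots,b_{d1})^\tau$ is nonzero; by right column operations from $\mathfrak{G}$ — which only ever recombine \emph{columns}, hence never touch the first column when we use $(I\pm\Delta_{1j})$ with $j\neq1$... no, $(I+\Delta_{1j})$ adds column $1$ to column $j$, leaving column $1$ fixed, good, but it does not help reduce column $1$. To reduce the first column I must use $(I+\Delta_{j1})$, $j\neq1$, which adds column $j$ into column $1$. Running the Euclidean algorithm among the columns, I can make the first column equal to $(\,g,0,\dots,0\,)^\tau$ times a unit — here $g=\gcd$ of the first column entries — after applying a permutation $I_{1i}$ and a sign $S_1$; wait, that requires creating zeros in rows $2,\dots,d$ of column $1$, which the column operations $(I+\Delta_{j1})$ cannot do because they operate on whole columns, not entries.

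I see the real point now, and I will state the plan accordingly: \emph{do column operations to clear the first row, not the first column}. Right-multiplication by $(I+\Delta_{1j})$ adds column $1$ to column $j$, changing the $(1,j)$ entry by $b_{11}$; iterating the Euclidean algorithm across row $1$ reduces that row to $(g,0,\dots,0)$ up to a permutation and sign, where $g=\gcd(b_{11},\dots,b_{1d})>0$; collect these into $V_1\in\mathfrak{M}$, a product of generators of $\mathfrak{G}$. Then $BV_1=\begin{pmatrix}\ell_{11}&0\\ c& B'\end{pmatrix}$ with $\ell_{11}=g>0$, $c$ an integral column, $B'$ an $(d-1)\times(d-1)$ nonsingular integral matrix. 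Apply the inductive hypothesis to $B'$: $B'V'=L'$ lower triangular, positive diagonal, $V'$ a product of $(d-1)$-dimensional $\mathfrak{G}$-generators. Embed $V'\hookrightarrow\tilde V'=I_1\oplus V'$, a product of $d$-dimensional $\mathfrak{G}$-generators that fix the first coordinate; then $BV_1\tilde V'=\begin{pmatrix}\ell_{11}&0\\ c'& L'\end{pmatrix}$, which is genuinely lower triangular with positive diagonal $\{\ell_{11}\}\cup\mathrm{diag}(L')$. Take $V=V_1\tilde V'\in\mathfrak{M}$ and $L=BV$; then $B=L V^{-1}$, and $V^{-1}\in\mathfrak{M}$ is again a finite product of $\mathfrak{G}$-generators since every generator's inverse lies in $\mathfrak{G}$. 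Relabeling $V^{-1}$ as $V$ gives the statement.

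The base case $d=1$ is immediate: $B=(b)$ with $b\neq0$, take $V=S_1^{(0\text{ or }1)}$ so that $bV=|b|>0$. The main obstacle — and the only place real care is needed — is the bookkeeping in the Euclidean-algorithm step: verifying that the sequence of operations "add integer multiple of column $i$ to column $j$, swap columns, negate a column" applied to the $1\times d$ first row genuinely terminates with $(g,0,\dots,0)$ and that each such step is realized by a member of $\mathfrak{G}$ (adding a \emph{multiple} $k$ of a column is a composition of $|k|$ copies of $(I\pm\Delta_{ij})$, which is fine since $\mathfrak{M}$ is closed under products); everything else is routine block arithmetic and the observation that $d-1$-dimensional generators embed as $d$-dimensional ones.
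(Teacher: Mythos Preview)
Your final approach---clearing the first row via column operations implementing the Euclidean algorithm, then recursing on the lower-right $(d-1)\times(d-1)$ block---is correct and is essentially what the paper does (the paper phrases the row reduction as ``make the row nonnegative with $S_i$'s, move the smallest positive entry to position $(1,1)$, subtract, repeat,'' which is the same Euclidean process you describe). The detours through the last row and first column are dead ends you correctly abandoned; once you settle on reducing the first row, your argument and the paper's coincide, including the inductive step and the observation that the inverse of a product of $\mathfrak{G}$-generators is again such a product.
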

(For the proof, see Appendix \textsc{Proof of Lemma \ref{lem:LV}}.)

\begin{lemma}
\label{lem:see0}
An integral matrix in $\mathfrak{M}$ is the product of finite elements from $\mathfrak{G}.$
\end{lemma}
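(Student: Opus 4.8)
The plan is to bootstrap from Lemma~\ref{lem:LV}. Let $B\in\mathfrak{M}$ and write $B=LV$ as in that lemma, so $V$ is already a finite product of elements of $\mathfrak{G}$ (and in particular $V\in\mathfrak{M}$), while $L$ is an integral lower triangular matrix with positive diagonal entries $\ell_{11},\dots,\ell_{dd}$. Since $\det(B)=\pm1$ and $\det(V)=\pm1$, we get $\det(L)=\pm1$; but $\det(L)=\ell_{11}\cdots\ell_{dd}$ is a product of positive integers, so $\ell_{11}=\cdots=\ell_{dd}=1$ and $L$ is unipotent lower triangular. It therefore suffices to prove that such an $L$ is a finite product of elements of $\mathfrak{G}$, for then $B=LV$ is one as well.

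To factor $L$, I would reduce it to the identity by elementary column operations, carried out from the last column to the first. Write $L_{ab}$ for the entries of $L$, so $L_{ab}=0$ for $a<b$ and $L_{aa}=1$. Process the columns in the order $j=d,d-1,\dots,1$, maintaining the invariant that once columns $d,\dots,j+1$ have been handled they equal the standard basis vectors $\vec{e}_{d},\dots,\vec{e}_{j+1}$, while columns $1,\dots,j$ are still in their original form; in particular column $j$ equals $\vec{e}_j+\sum_{k>j}L_{kj}\vec{e}_k$. For $k=j+1,\dots,d$ in turn, subtract $L_{kj}$ copies of column $k$ from column $j$: since column $k$ is $\vec{e}_k$, this zeroes the $k$-th coordinate of column $j$ without touching any other column, so after $k$ runs from $j+1$ to $d$ column $j$ has become $\vec{e}_j$ and the invariant is preserved. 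By Lemma~\ref{lem:prmat}(2) each such step is right multiplication by a suitable power $(I\mp\Delta_{kj})^{|L_{kj}|}$ (the sign chosen according to that of $L_{kj}$), and by Lemma~\ref{lem:prmat}(1) we have $\Delta_{kj}^2=\delta_{jk}\Delta_{kj}=0$ because $k\neq j$, so $(I\mp\Delta_{kj})^{|L_{kj}|}=I\mp L_{kj}\Delta_{kj}$ really performs the stated operation and is a product of $|L_{kj}|$ elements of $\mathfrak{G}$. Stringing all these together gives $LP=I$ with $P$ a finite product of elements of $\mathfrak{G}$; hence $L=P^{-1}$, and since the inverse of every element of $\mathfrak{G}$ again lies in $\mathfrak{G}$, $L$ is a finite product of elements of $\mathfrak{G}$, which finishes the proof.

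I do not expect a genuine obstacle here: the substantive work was already done in Lemma~\ref{lem:LV}. The one place to be careful is the bookkeeping of the column reduction, namely clearing the columns right-to-left so that the ``pivot'' column used at each step has already been turned into a standard basis vector; this is what guarantees both that clearing one coordinate of the current column creates no new nonzero entries and that finished columns stay finished. An alternative would be to quote the classical fact that $GL_d(\Z)$ is generated by the transvections $I\pm\Delta_{ij}$ together with one sign matrix, but routing the argument through Lemma~\ref{lem:LV} keeps the section self-contained.
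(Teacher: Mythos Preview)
Your proposal is correct and follows essentially the same route as the paper: both apply Lemma~\ref{lem:LV} to write $B=LV$, observe that $\det(L)=\prod\ell_{jj}=1$ forces $L$ to be unipotent lower triangular, and then reduce $L$ to the identity by right multiplying with powers of $(I\pm\Delta_{kj})$, using $\Delta_{kj}^2=0$ to identify $(I\pm\Delta_{kj})^m$ with $I\pm m\Delta_{kj}$. The only cosmetic difference is the bookkeeping: the paper clears entries row by row from the bottom up, while you clear column by column from the right, but the underlying elementary operations and the invariant (the pivot column is already a standard basis vector when used) are the same.
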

(For the proof, see Appendix \textsc{Proof of Lemma \ref{lem:see0}}.)\\

Let $\vec{r}$ be a d-dimensional row vector for which the only possible non-zero coordinates are the first $p-1$ entries
with values $1$ or $0$,
\begin{equation*}
\vec{r} \equiv [r_1,r_2,\cdots, r_{p-1}, 0, \cdots,0], r_j \in \{0,1\}.
\end{equation*}
Define matrix $L_{\vec{r}} ^{(p)}$ and $M_{\vec{r}} ^{(p)}$ by
\begin{eqnarray*}
  L_{\vec{r}} ^{(p)} &\equiv& I + \sum_{j=1} ^{p-1} r_j \Delta_{p j}, p\leq d;\\
  M_{\vec{r}} ^{(p)} &\equiv& I + \sum_{j=1} ^{p-1} r_j \Delta_{d j}, p\leq d.
\end{eqnarray*}
We have
  \begin{eqnarray*}
    (L_{\vec{r}} ^{(p)})^{-1} &=& I - \sum_{j=1} ^{p-1} r_j \Delta_{p j};\\
    I_{p d} L_{\vec{r}} ^{(p)} I_{p d} &=& M_{\vec{r}} ^{(p)}.\\
  \end{eqnarray*}

\begin{proposition}
\label{prop:see02}
Let $B$ be an integral matrix with $|\det(B)| =  2.$ Then for some natural number $p\leq d, n_0$ and $ m_0,$ and
some vector
\begin{equation*}
\vec{r} \equiv [r_1,r_2,\cdots, r_{p-1}, 0, \cdots,0], r_j \in \{0,1\}.
\end{equation*} we have
\begin{equation}
\label{eq:prop3.1eq1}
  B = L_{\vec{r}} ^{(p)} D_p V.
\end{equation}
\begin{equation}
\label{eq:prop3.1eq2}
  B = I_{p d} M_{\vec{r}} ^{(p)} D_d U.
\end{equation}
\begin{equation}
\label{eq:prop3.1eq3}
  B = V_1 V_2 \cdots V_{n_0} D_d U_1 U_2 \cdots U_{m_0}.
\end{equation}

where $V$ and $U$ are matrices in $\mathfrak{M}$, $V_j, U_i \in \mathfrak{G}$,
$L_{\vec{r}} ^{(p)} $ and
$M_{\vec{r}} ^{(p)}$ are as defined as before Proposition \ref{prop:see02}
\end{proposition}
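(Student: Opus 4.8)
The plan is to obtain all three factorizations from the triangular decomposition of Lemma~\ref{lem:LV}, followed by an integer column reduction of the triangular factor. First I would apply Lemma~\ref{lem:LV} to write $B=LV_0$ with $L$ integral lower triangular with positive diagonal $\{\ell_{11},\dots,\ell_{dd}\}$ and $V_0\in\mathfrak{M}$ a finite product of elements of $\mathfrak{G}$. Since $|\det(B)|=2$ and $|\det(V_0)|=1$ we get $|\det(L)|=2$, and because $L$ is lower triangular with positive integer diagonal, $\ell_{11}\cdots\ell_{dd}=2$; hence exactly one diagonal entry equals $2$, say $\ell_{pp}=2$, and all the others equal $1$. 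This pins down the index $p\le d$ of the statement.

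The technical heart is to show that by right-multiplying $L$ by a finite product $W$ of elements of $\mathfrak{G}$ one reaches exactly $LW=L_{\vec{r}}^{(p)}D_p$ for a suitable $\vec{r}=[r_1,\dots,r_{p-1},0,\dots,0]$ with $r_j\in\{0,1\}$. Right multiplication by a power of $I\pm\Delta_{ij}$ ($i\ne j$) is an elementary integer column operation (adding or subtracting an integer multiple of one column to another), so such a $W$ is automatically a product of elements of $\mathfrak{G}$. I would process the columns of $L$ from the $d$-th down to the first. For $j>p$ and for $j=p$, the columns already treated (those of larger index) have been turned into standard basis vectors $\vec{e}_k$, $k\ne p$, so they can be used to clear every below-diagonal entry of column $j$; since $\ell_{jj}=1$ for $j\ne p$ this makes column $j$ equal to $\vec{e}_j$, while column $p$ becomes $2\vec{e}_p$ (its only nonzero entry having been $\ell_{pp}=2$). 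For $j<p$, I would first clear the below-diagonal entries of column $j$ lying in rows $\ne p$ using the already-treated columns (each of the form $\vec{e}_k$ or $\vec{e}_k+r_k\vec{e}_p$), and then reduce the remaining row-$p$ entry modulo $2$ by subtracting an integer multiple of column $p=2\vec{e}_p$, recording the resulting value as $r_j\in\{0,1\}$. When every column has been processed, $L$ has become $I+\sum_{j<p}r_j\Delta_{pj}+\Delta_{pp}=L_{\vec{r}}^{(p)}D_p$. Therefore $L=L_{\vec{r}}^{(p)}D_pW^{-1}$ with $W^{-1}\in\mathfrak{M}$, so $B=L_{\vec{r}}^{(p)}D_p(W^{-1}V_0)$ and $V:=W^{-1}V_0\in\mathfrak{M}$; this is \eqref{eq:prop3.1eq1}.

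From here \eqref{eq:prop3.1eq2} and \eqref{eq:prop3.1eq3} are formal rewritings. Using the identity $I_{pd}L_{\vec{r}}^{(p)}I_{pd}=M_{\vec{r}}^{(p)}$ recorded before the statement, together with $I_{pd}D_pI_{pd}=D_d$ (conjugation by the transposition of coordinates $p$ and $d$ carries the $2$ from position $p$ to position $d$), and $I_{pd}^2=I$, I would substitute $L_{\vec{r}}^{(p)}=I_{pd}M_{\vec{r}}^{(p)}I_{pd}$ and $D_p=I_{pd}D_dI_{pd}$ into \eqref{eq:prop3.1eq1} to get $B=I_{pd}M_{\vec{r}}^{(p)}D_d(I_{pd}V)$ with $U:=I_{pd}V\in\mathfrak{M}$, which is \eqref{eq:prop3.1eq2}. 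Finally, $I_{pd}\in\mathfrak{G}$, while $M_{\vec{r}}^{(p)}$ is integral with integral inverse $I-\sum_{j<p}r_j\Delta_{dj}$ and hence lies in $\mathfrak{M}$, and $U\in\mathfrak{M}$; applying Lemma~\ref{lem:see0} to $M_{\vec{r}}^{(p)}$ and to $U$ (and noting $I_{pd}$ is already in $\mathfrak{G}$), then concatenating the resulting $\mathfrak{G}$-products, yields $B=V_1\cdots V_{n_0}D_dU_1\cdots U_{m_0}$ with all $V_j,U_i\in\mathfrak{G}$, which is \eqref{eq:prop3.1eq3}.

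I expect the main obstacle to be the bookkeeping in the column-reduction step: the operations must be ordered so that a column, once normalized to $\vec{e}_k$ or $2\vec{e}_p$, is never disturbed again, and so that the row-$p$ entries — which do get perturbed while the other below-diagonal entries of a column are being cleared — are cleaned up at the very end of that column's processing by the single available column $2\vec{e}_p$. Everything else reduces to Lemmas~\ref{lem:LV} and~\ref{lem:see0} and the elementary-matrix identities already established.
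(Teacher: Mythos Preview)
Your proposal is correct and follows essentially the same approach as the paper: apply Lemma~\ref{lem:LV} to get $B=LV_0$, locate the unique diagonal entry $\ell_{pp}=2$, reduce $L$ to $L_{\vec r}^{(p)}D_p$ by elementary column operations (the paper does this in two phases---first invoking the method of Lemma~\ref{lem:see0} to clear all rows but the $p$-th, then reducing the surviving $p$-th-row entries modulo~$2$---while you organize the same operations column-by-column from right to left), and then obtain \eqref{eq:prop3.1eq2} by conjugating with $I_{pd}$. Your explicit derivation of \eqref{eq:prop3.1eq3} via Lemma~\ref{lem:see0} is a welcome addition, as the paper's proof stops after \eqref{eq:prop3.1eq2}.
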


\begin{proof}
 Let $B$ be an integral matrix with $\det B= \pm 2.$ By Lemma \ref{lem:LV}, we have $B=LU$
where $L = (\ell_{i j})$ is lower triangular and $U\in \mathfrak{M}$. So $\ell_{i j}=0,$ if $i<j$ and $\ell_{i i} >0, i=1, \cdots d,$
and $\det(L) = \Pi \ell_{jj} =2.$ Therefore, every diagonal element has value $1$ except for $\ell_{p p}=2$ for some
$p\leq d.$

By the methods we used in proof of Lemma \ref{lem:see0}, (we do all  right multiplication  but $i=p.$) we can factor
$B=LU=L_1 V_1 $ where $V_1$ is the product of some finite elements in $\mathfrak{G}$ and
$L_1 =(c_{i j})$ is lower triangular, and its $p^{\textrm{th}}$-row is
$[c_{p 1}\ c_{p 2} \cdots c_{p (p-1)} \ 2  \ 0  \cdots 0]$ and all entries in other rows are the same
entries in corresponding positions as in identity matrix $I$. For $j=1, \cdots, p-1,$ $c_{p j} =2 m_j+ r_{j}, 0 \leq r_j\leq 1.$
We define the row vector $\vec{r}$
\begin{equation*}
\vec{r} \equiv [r_{p 1}\ r_{p 2} \cdots r_{p (p-1)} \ 2  \ 0  \cdots 0].
\end{equation*}
Notice that the inverse to $(I-\Delta_{p j})^{m_{j} }$ is $(I+\Delta_{p j})^{m_{j} },$ we have
\begin{eqnarray*}
  B &=& L_1 V_1 = \big(L_1   (I-\Delta_{p (p-1)})^{m_{(p-1)} } \big)
    \big( (I+\Delta_{p (p-1)})^{m_{(p-1)} }  V_1 \big) \\
   &=&  \big(L_1   (I-\Delta_{p (p-1)})^{m_{(p-1)} }
   \cdots  (I-\Delta_{p 2})^{m_{2} } \big) \cdot\\
   &&  \big((I+\Delta_{p 2})^{m_{2} }  \cdots (I+\Delta_{p (p-1)})^{m_{(p-1)} }  V_1 \big)\\
   &=&  L_0 V, \textrm{ where }\\
   L_0 &\equiv& L_1   (I-\Delta_{p (p-1)})^{m_{(p-1)} }
   \cdots  (I-\Delta_{p 2})^{m_{2} },\textrm{ and }\\
   V &\equiv& (I+\Delta_{p 2})^{m_{2} }  \cdots (I+\Delta_{p (p-1)})^{m_{(p-1)} }  V_1.
\end{eqnarray*}
It is clear that $V$ is the product of finite elements from $\mathfrak{G}$ and it is in $\mathfrak{M}$.
Also, for $j<p,$ the only different between matrices $L_1$ and $L_1 (I-\Delta_{p 2})^{m_{2} }$ is,
at $pj$ position the first matrix has value $c_{p j}$  and the second (product) matrix has value $r_j.$
So $L_0$ is a lower triangular matrix for which if $j\neq p$ then the $j^{th}$ row vector is $\vec{e}_j$
and the $p^{th}$ row vector is $\vec{r}+2\vec{e}_p.$ So, $L_0=L_{\vec{r}} ^{(p)} D_p.$
Hence we have $B=L_{\vec{r}} ^{(p)} D_p V. $ Since $I_{d p}^2 =I,$ we have
\begin{equation*}
    B = L_{\vec{r}} ^{(p)} D_p V =  I_{d p} (I_{d p} L_{\vec{r}} ^{(p)} I_{d p}) ( I_{d p} D_p I_{d p}) (I_{d p} V)=I_{d p} M_{\vec{r}} ^{(p)}D_d U.
\end{equation*}
Here $U\equiv I_{d p} V$ is the finite product of elements from $\mathfrak{G}$. Its is easy to verify that $ I_{d p} D_p I_{d p} = D_d$ and
$I_{d p} L_{\vec{r}} ^{(p)} I_{d p} = M_{\vec{r}} ^{(p)}.$

\end{proof}

\section{Partition Theorem}\label{ss:s2}

The purpose for this section is to establish the following Theorem \ref{theom:that}. We call it Partition Theorem.
The partition properties are essential in our approach in this paper.

\begin{theorem}
\label{theom:that}
Every integral matrix $B$  with $|\det(B)| = 2$ is integrally similar to an integral matrix $A$ with the properties that
\begin{enumerate}
  \item
\begin{equation}
\label{eq:(1)}
    A \Z^d = A^\tau \Z^d.
\end{equation}
  \item There exists a vector $\vec{\ell}_A\in\Z^d$ such that
\begin{equation}
\label{eq:(2)}
 \Z^d = (\vec{\ell}_A + A \Z^d) \dotcup A \Z^d.
\end{equation}
  \item There exists a vector $\vec{q}_A \in \Z^d$
\begin{equation}\label{eq:(3)}
  \vec{q}_A \circ A \Z^d \subseteq 2\Z
 \textit{ { and \ }}
        \vec{q}_A \circ (\vec{\ell}_A + A \Z^d) \subseteq 2\Z+1.
\end{equation}
  \item For $\vec{m}\in A\Z^d,$ we have
\begin{equation}
\label{eq:partition}
    \Z^d = (\vec{n}-A\Z^d) \dotcup (\vec{\ell}_A-\vec{m}-\vec{n}+A\Z^d), \ \forall \vec{n}\in\Z^d.
\end{equation}
  \item For $\vec{m}\in \vec{\ell}_A+A\Z^d$, we have
\begin{equation}\label{eq:eq}
    \vec{n}-A\Z^d = \vec{\ell}_A-\vec{m}-\vec{n}+A\Z^d, \ \forall \vec{n}\in\Z^d.
\end{equation}
\end{enumerate}
\end{theorem}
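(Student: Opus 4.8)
The strategy is to isolate property~(1) as the only substantive one: once an $A$ integrally similar to $B$ with $A\Z^d=A^\tau\Z^d$ has been produced, properties~(2)--(5) follow by elementary coset bookkeeping. Indeed, for \emph{any} integral $A$ with $|\det A|=2$ one has $[\Z^d:A\Z^d]=2$ and $(2\Z)^d\subseteq A\Z^d$, so $A\Z^d/(2\Z)^d$ is a hyperplane in $(\Z/2)^d$ and hence $A\Z^d=\{\vec x\in\Z^d:\vec c^\tau\vec x\in2\Z\}$ for the unique nonzero $\vec c\in(\Z/2)^d$ annihilating the columns of $A$ modulo $2$; the same holds for $A^\tau$. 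Granting property~(1), put $\Lambda:=A\Z^d=A^\tau\Z^d$ and choose $\vec\ell_A\in\Z^d$ with $\vec c^\tau\vec\ell_A$ odd and $\vec q_A\in\Z^d$ with $\vec q_A\equiv\vec c\pmod 2$. Then property~(2) (the element $\vec\ell_A\notin\Lambda$ represents the nontrivial coset of the index-$2$ subgroup $\Lambda$) and property~(3) are immediate. Since $\Lambda=-\Lambda$ and $2\vec n\in\Lambda$ for all $\vec n$, for $\vec m\in\Lambda$ we get $\vec n-A\Z^d=\vec n+\Lambda$ and $\vec\ell_A-\vec m-\vec n+A\Z^d=\vec\ell_A-\vec n+\Lambda$, and these two cosets differ by $\vec\ell_A-2\vec n\equiv\vec\ell_A\notin\Lambda$, hence are the two cosets of $\Lambda$: this is \eqref{eq:partition}. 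Similarly, for $\vec m\in\vec\ell_A+\Lambda$ we have $\vec\ell_A-\vec m\in\Lambda$, so $\vec\ell_A-\vec m-\vec n+A\Z^d=-\vec n+\Lambda=\vec n+\Lambda=\vec n-A\Z^d$: this is \eqref{eq:eq}.

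It remains to prove property~(1): find $S\in\mathfrak M$ with $A:=SBS^{-1}$ satisfying $A\Z^d=A^\tau\Z^d$. Because $S\Z^d=S^\tau\Z^d=\Z^d$, we have $A\Z^d=SB\Z^d$ and $A^\tau\Z^d=(S^\tau)^{-1}B^\tau\Z^d$, so the hyperplanes $A\Z^d/(2\Z)^d$ and $A^\tau\Z^d/(2\Z)^d$ are cut out modulo $2$ by $(S^\tau)^{-1}\vec c_B$ and by $S\vec c_{B^\tau}$, where $\vec c_B,\vec c_{B^\tau}\in(\Z/2)^d$ are the left- and right-null vectors of $B\bmod 2$ --- both nonzero, since $\det B=\pm2$ forces $\operatorname{rank}(B\bmod 2)=d-1$. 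Hence property~(1) for $A=SBS^{-1}$ is equivalent to $(S^\tau)^{-1}\vec c_B\equiv S\vec c_{B^\tau}\pmod 2$, i.e.\ to
\[
(S^\tau S)\,\vec c_{B^\tau}\equiv\vec c_B\pmod 2 .
\]
As the reduction $SL_d(\Z)\to GL_d(\Z/2)$ is surjective, it suffices to exhibit $\bar S\in GL_d(\Z/2)$ with $\bar S^\tau\bar S\,\vec c_{B^\tau}=\vec c_B$ over $\Z/2$ and then lift it into $\mathfrak M$.

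The last point is a piece of $\Z/2$-linear algebra: for nonzero $\vec u,\vec v\in(\Z/2)^d$ find $\bar S\in GL_d(\Z/2)$ with $\bar S^\tau\bar S\,\vec u=\vec v$. The case $d=1$ is trivial, so let $d\ge2$. Writing $\vec w:=\bar S\vec u$, the identity $\vec u^\tau\bar S^\tau\bar S\,\vec u=\vec w^\tau\vec w$ forces $\vec w$ to have an even (resp.\ odd) number of nonzero entries according as $\vec u^\tau\vec v=0$ (resp.\ $1$); conversely, I would \emph{choose} a nonzero $\vec w$ of the required parity (available since $d\ge2$), take some $\bar S_1\in GL_d(\Z/2)$ with $\bar S_1\vec u=\vec w$ (by transitivity of $GL_d(\Z/2)$ on nonzero vectors), and then replace $\bar S_1$ by $\bar S=\bar S_1\bar R$ for a suitable $\bar R$ with $\bar R\vec u=\vec u$. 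The transposes of the stabilizer $\{\bar R:\bar R\vec u=\vec u\}$ form the stabilizer of the covector $\vec u^\tau$, which acts on the nonzero vectors of $(\Z/2)^d$ with exactly two orbits, namely the two ``sides'' of the hyperplane $\{\vec x:\vec u^\tau\vec x=0\}$; and since $\vec u^\tau(\bar S_1^\tau\vec w)=\vec w^\tau\vec w=\vec u^\tau\vec v$, the vectors $\bar S_1^\tau\vec w$ and $\vec v$ lie on the same side, so a suitable $\bar R$ exists and $\bar S=\bar S_1\bar R$ works. (Alternatively, build by hand a symmetric, invertible, non-alternating matrix $G$ over $\Z/2$ with $G\vec u=\vec v$ and write $G=\bar S^\tau\bar S$ via the classification of symmetric bilinear forms over $\Z/2$.) I expect this $\Z/2$-step --- above all the subcase $\vec u^\tau\vec v=0$, where $\vec w$ must be chosen of even weight and the orbit condition must be checked --- to be the main obstacle; everything else, including the deduction of properties~(2)--(5) from property~(1), is routine.
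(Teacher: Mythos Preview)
Your proposal is correct and takes a genuinely different route from the paper.

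The paper proceeds constructively: using the factorization result (Proposition~\ref{prop:see02}) it writes $B=I_{pd}M_{\vec r}^{(p)}D_dU_1$, conjugates once to get $C_1$, factors $C_1$ again to obtain $C_2=D_dW_1=W_2D_dI_{qd}M_{\vec s}^{(q)}$, and finally sets $A=XC_2X^{-1}$ with $X=I-\Delta_{dq}$ (or $X=I$). Property~(1) is then verified by an explicit entry-by-entry computation showing that $A^{-1}A^\tau$ is integral; properties~(2) and~(3) are checked by writing down explicit generators of $A\Z^d$ from the factorization and producing concrete $\vec\ell_A=\vec e_{p_0}$ and $\vec q_A=\vec e_{p_0}+\sum_{j<p_0}\varrho_j\vec e_j$. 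Properties~(4) and~(5) are deduced from~(2), essentially as you do.

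Your argument instead isolates the exact obstruction: working modulo~$2$, property~(1) for $A=SBS^{-1}$ becomes the single equation $(S^\tau S)\vec c_{B^\tau}\equiv\vec c_B$ in $(\Z/2)^d$, where $\vec c_B,\vec c_{B^\tau}$ are the left and right null vectors of $B\bmod 2$. You then solve this by a clean transitivity argument in $GL_d(\Z/2)$ (choose $\vec w$ of the correct weight parity, move $\vec u$ to $\vec w$, then adjust by an element of the stabilizer of $\vec u$ whose transpose acts transitively on each ``side'' of the hyperplane $\vec u^\tau\vec x=0$), and lift $\bar S$ to $\mathfrak M$ via the surjectivity of $GL_d(\Z)\to GL_d(\Z/2)$. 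The deduction of~(2)--(5) from~(1) is, as you say, pure index-$2$ coset bookkeeping and does not depend on how $A$ was built.

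Your approach is shorter and more conceptual, and makes transparent that only a $\Z/2$-linear statement is at stake. The paper's approach, while heavier, has the advantage of producing explicit formulas for $S$, $\vec\ell_A$, and $\vec q_A$ directly from a normal form of $B$; these explicit objects are what the paper uses in the worked examples of Section~\ref{ss:examples}. If you intend to compute examples, you would still want something like the paper's factorization; for the theorem as stated, your argument suffices.
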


\begin{remark}
Note that the matrices in Theorem \ref{theom:that} are not necessarily expansive. For example, for $d\geq 2,$ $D_d$ is not expansive but determinant value is $2$.
\end{remark}

\begin{proof}

By Proposition \ref{prop:see02} we have
\begin{eqnarray*}
  B &=& I_{p d} M_{\vec{r}} ^{(p)} D_d U_1
\end{eqnarray*}
for some $U_1\in \mathfrak{M}$, $p\leq d$ and $\vec{r}= [r_1,r_2,\cdots, r_{p-1}, 0, \cdots,0], \  r_j \in \{0,1\}.$ Define
\begin{equation*}
   C_1 \equiv  U_1 B U_1 ^{-1} = U_1 I_{p d} M_{\vec{r}} ^{(p)} D_d.
\end{equation*}
We have $|\det(C_1)|= 2$. By Proposition \ref{prop:see02} again,
\begin{eqnarray*}
 C_1 &=& I_{q d} M_{\vec{s}} ^{(q)} D_d U_2
 \end{eqnarray*}
for some $U_2\in \mathfrak{M}$, $q\leq d$ and $\vec{s}= [s_1,s_2,\cdots, s_{q-1}, 0, \cdots,0], s_j \in \{0,1\}.$
Define $C_2 \equiv
  (I_{q d} M_{\vec{s}} ^{(q)})^{-1} C_1 (I_{q d} M_{\vec{s}} ^{(q)})$.
  We have
 \begin{eqnarray*}
 C_2 &=& D_d U_2 (I_{q d} M_{\vec{s}} ^{(q)})\\
  &=&D_d W_1
 \end{eqnarray*}
where $W_1\equiv U_2 (I_{q d} M_{\vec{s}} ^{(q)})\in \mathfrak{M}.$
 Also we have
 \begin{eqnarray*}
 C_2 &=&      (I_{q d} M_{\vec{s}} ^{(q)})^{-1} U_1 I_{p d} M_{\vec{r}} ^{(p)} D_d (I_{q d} M_{\vec{s}} ^{(q)})\\
 &=& W_2 D_d I_{q d} M_{\vec{s}} ^{(q)}
  \end{eqnarray*}
where $W_2 \equiv (I_{q d} M_{\vec{s}} ^{(q)})^{-1} U_1 I_{p d} M_{\vec{r}} ^{(p)} \in \mathfrak{M}.$
Define a matrix $X$ be
\begin{equation}
\label{eq:X}
X \equiv  \left\{
\begin{array}{ll}
I \quad &\texttt{ if }q=d, \\
I-\Delta_{d q} \quad &\texttt{ if } q< d.
\end{array} \right.
\end{equation}
Define
$A\equiv XC_2 X^{-1}.$ It is clear that the matrices $A$ and $B$ are integrally similar.
We claim that the matrix $A$ has the desired properties (1), (2), (3), (4) and (5).

We have
\begin{eqnarray*}
  A &=& XD_d W_1 X^{-1}, \\
  A &=& XW_2 D_d I_{q d} M_{\vec{s}} ^{(q)} X^{-1},
\end{eqnarray*}
then we have
\begin{equation}
\label{eq:A^tau} A^\tau
  = (X^{-1})^\tau (M_{\vec{s}} ^{(q)})^\tau I_{q d}D_d (W_2)^\tau X^\tau,
\end{equation}
and
\begin{equation}
  \label{eq:A^{-1}} A^{-1}
  = X (W_1)^{-1} (D_d)^{-1} X^{-1}.
\end{equation}

Proof of Property (1).
 It is clear that $|\det(A^{-1} A^\tau)|=1.$ We will show that the product $A^{-1} A^\tau$
is an integral matrix. Then we will have $A^{-1} A^\tau \Z^d = \Z^d.$ This is equivalent to the equation $A^\tau \Z^d = A \Z^d.$ We have two cases.

Case (A) First we assume $q<d.$
By \eqref{eq:A^{-1}} and \eqref{eq:A^tau} we have
\begin{eqnarray*}
  A^{-1} A^\tau
  &=& X (W_1)^{-1} \big[ (D_d)^{-1} X^{-1}    (X^{-1})^\tau (M_{\vec{s}} ^{(q)})^\tau I_{q d}D_d \big] (W_2)^\tau X^\tau
\end{eqnarray*}
$(D_d)^{-1}$ and $D_d$ are the only matrices on the right product which are not in $\mathfrak{M}.$  We will prove that the
product $(D_d)^{-1} X^{-1}   (X^{-1})^\tau (M_{\vec{s}} ^{(q)})^\tau I_{q d}D_d$ is an integral matrix. This will complete the proof of (1).

It is clear that $X^{-1} = I+\Delta_{d q}$ and $(X^{-1})^\tau = I+\Delta_{q d}$ and $(D_d)^{-1} = I - \frac{1}{2}\Delta_{d d}.$
So we have
\begin{eqnarray*}
  (D_d)^{-1} X^{-1} \cdot (X^{-1})^\tau &=& (I - \frac{1}{2}\Delta_{d d}) \cdot (I+\Delta_{d q})\cdot (I+\Delta_{q d})\\
  &=& (I - \frac{1}{2}\Delta_{d d}) \cdot (I + \Delta_{q d} + \Delta_{d q} +\Delta_{d d})\\
  &=& I + \Delta_{q d} +\frac{1}{2} \Delta_{d q}.
\end{eqnarray*}
Here we used the Equation \eqref{eq:magic}.
We also have
\begin{eqnarray*}
(M_{\vec{s}} ^{(q)})^\tau I_{q d} &=&  (I + \sum_{j=1} ^{q-1} s_j \Delta_{d j})^\tau \cdot I_{q d}\\
&=& (I + \sum_{j=1} ^{q-1} s_j \Delta_{j d }) \cdot I_{q d}\\
&=& I_{q d} + \sum_{j=1} ^{q-1} s_j \Delta_{j q }.
\end{eqnarray*}
Repeatedly using Equation \eqref{eq:magic}, we obtain that
\begin{eqnarray*}
  A^{-1} A^\tau &=&
\big( (D_d)^{-1} X^{-1} \cdot (X^{-1})^\tau \big) \cdot
\big(  (M_{\vec{s}} ^{(q)})^\tau I_{q d} \big)
   \cdot D_d\\
 &=& (I + \Delta_{q d} +\frac{1}{2} \Delta_{d q}) \cdot (I_{q d} + \sum_{j=1} ^{q-1} s_j \Delta_{j q }) \cdot D_d\\
 &=& (\frac{1}{2}\Delta_{d d} + I_{q d} + \sum_{j=1} ^{q-1} s_j \Delta_{j q }) \cdot D_d\\
 &=& \Delta_{d d}+2 I_{q d} + \sum_{j=1} ^{q-1} s_j \Delta_{j q } .
\end{eqnarray*}
This is an integral matrix.
In the above last step  we used the following equality
\begin{equation}
\Delta_{i j} \cdot D_d =  \left\{
\begin{array}{ll}
2 \Delta_{i j} \quad & j=d, \\
\Delta_{i j} \quad & j< d.
\end{array} \right.
\end{equation}


Case (B)
Next, we assume $q=d.$ By definition \eqref{eq:X} $X=I.$ We have
$A = C_2 = D_d W_1 =W_2 D_d  M_{\vec{s}} ^{(d)}.$
So, $A^{-1}
  = (W_1)^{-1} (D_d)^{-1} $ and
  $A^\tau = (M_{\vec{s}} ^{(d)})^\tau D_d (W_2)^\tau.$
\begin{eqnarray*}
  A^{-1} A^\tau
  &=& W_1 ^{-1} (D_d)^{-1} (M_{\vec{s}} ^{(d)})^\tau D_d  W_2 ^\tau.
  \end{eqnarray*}
Since $ W_1 ^{-1}$ and $W_2 ^\tau$ are integral matrices, it is enough to prove that the product
$(D_d)^{-1} (M_{\vec{s}} ^{(d)})^\tau D_d$ is integral.
  \begin{eqnarray*}
  (D_d)^{-1} (M_{\vec{s}} ^{(d)})^\tau D_d
  &=& (I - \frac{1}{2}\Delta_{d d})\cdot (I + \sum_{j=1} ^{d-1} s_j \Delta_{d j})^\tau \cdot D_d\\
  &=& (I - \frac{1}{2}\Delta_{d d})\cdot (I + \sum_{j=1} ^{d-1} s_j \Delta_{j d}) \cdot D_d\\
  &=& (I - \frac{1}{2}\Delta_{d d} + \sum_{j=1} ^{d-1} s_j \Delta_{j d}) \cdot D_d\\
  &=& I  + \sum_{j=1} ^{d-1} 2 s_j \Delta_{j d}.
\end{eqnarray*}
So, $A^{-1} A^\tau$ an integral matrix.  Proof of Property (1) is complete.\\

Proof of Property (2). Since $|\det(A)|=2,$ by Proposition \ref{prop:see02} we have
\begin{equation}
\label{eq:ok}
    A = I_{p_0 d} M_{\vec{r}_0} ^{(p_0)} D_d U_0
\end{equation}
for some $p_0 \leq d,$ $U_0 \in \mathfrak{M} $ and vector
$\vec{r}_0 \equiv \sum_{j=1} ^{p_0 -1} \varrho_j \vec{e}_j, \varrho_j\in \{0,1\}.$ This is a vector in $\Z^d.$ Also we have
\begin{equation}
\label{eq:mq}
   M_{\vec{r}_0} ^{(p_0)} = I + \sum_{j=1} ^{p_0 -1} \varrho_j \Delta_{d j}.
\end{equation}
Let $\vec{v}=\sum_j x_j \vec{e}_j$ be a vector in $\Z^d.$ The coefficient $x_d$ is even or odd.
So we have
\begin{eqnarray*}
\Z^d &=& (\vec{e}_d + D_d \Z^d ) \dotcup D_d \Z^d .
\end{eqnarray*}
All matrices $I_{p_0 d}, M_{\vec{r}_0} ^{(p_0)}$ and $U_0$ are in $\mathfrak{M}.$ So
$U_0 \Z^d=\Z^d$ and
$I_{p_0 d} M_{\vec{r}_0} ^{(p_0)} \Z^d = \Z^d.$ We have
\begin{eqnarray*}
\Z^d &=& I_{p_0 d} M_{\vec{r}_0} ^{(p_0)} \Z^d\\
&=& I_{p_0 d} M_{\vec{r}_0} ^{(p_0)} \big( (\vec{e}_d + D_d \Z^d ) \dotcup D_d \Z^d\big)\\
&=& I_{p_0 d} M_{\vec{r}_0} ^{(p_0)} \big( (\vec{e}_d + D_d U_0 \Z^d ) \dotcup D_d U_0 \Z^d\big)
\end{eqnarray*}
Define
\begin{equation*}
  \vec{\ell}_A \equiv I_{p_0 d} M_{\vec{r}_0} ^{(p_0)} \vec{e}_d=  \vec{e}_{p_0}.
\end{equation*}
By Equation \eqref{eq:ok} we obtain
\begin{eqnarray*}
 \Z^d &=& (I_{p_0 d} M_{\vec{r}_0} ^{(p_0)} e_d +I_{p_0 d} M_{\vec{r}_0} ^{(p_0)}D_d U_0 \Z^d)\dotcup I_{p_0 d} M_{\vec{r}_0} ^{(p_0)}D_d U_0 \Z^d\\
&=& (\vec{\ell}_A + A \Z^d) \dotcup A \Z^d.
\end{eqnarray*}

Proof of Property (3). Case (A). Assume $p_0 <d.$
We will say that $A \Z^d$ is integrally generated by a (finite) set $\{\vec{u}_j\}$ if for each vector $\vec{v}\in A \Z^d$ there exists a (finite) set of integers $\{s_j\}$ such that $\vec{v} = \sum s_j \vec{u}_j.$ We will call the set a generator for $A \Z^d.$
The set $\{ A e_j, j=1,\cdots, d\}$ is a generator for $A \Z^d.$
By Equation \eqref{eq:ok} $A = I_{p_0 d} M_{\vec{r}_0} ^{(p_0)}D_d U_0.$ Since $U_0 \Z^d = \Z^d,$
the set $\{I_{p_0 d} M_{\vec{r}_0} ^{(p_0)}D_d \vec{e}_i,i=1,\cdots, d\}$ is a generator for $A \Z^d.$
We have
\begin{eqnarray*}
I_{p_0 d}  M_{\vec{r}_0} ^{(p_0)} D_d \vec{e}_i
&=& I_{p_0 d} \big(I + \sum_{j=1} ^{p_0 -1} \varrho_j \Delta_{d j}\big) D_d \vec{e}_i\\
&=& \left\{
\begin{array}{lll}
I_{p_0 d}\big((1+\varrho_i) \vec{e}_i\big) \quad & \texttt{ if }i\leq p_0-1, \\
I_{p_0 d}(\vec{e}_{p_0}) \quad & \texttt{ if } i = p_0,\\
I_{p_0 d}(\vec{e}_i) \quad & \texttt{ if }p_0+1 \leq i \leq d-1,\\
I_{p_0 d}(2\vec{e}_d) \quad & \texttt{ if }i=d.
\end{array} \right.\\
&=&
   \left\{
\begin{array}{llll}
(1+\varrho_i) \vec{e}_i \quad & \texttt{ if }i\leq p_0-1, \\
\vec{e}_d \quad & \texttt{ if } i=p_0,\\
\vec{e}_i \quad & \texttt{ if } p_0+1 \leq i \leq d-1,\\
2\vec{e}_{p_0} \quad & \texttt{ if }i=d.
\end{array} \right.
\end{eqnarray*}

Define
\begin{equation}
\label{eq:theq}
\vec{q}_A\equiv \vec{e}_{p_0}+\sum_{j=1} ^{p_0 -1} \varrho_j \vec{e}_j.
\end{equation}
To prove that $\vec{q}_A \circ A \Z^d \subseteq 2\Z,$ it is enough to show that
$\vec{q}_A\circ (I_{p_0 d}  M_{\vec{r}_0} ^{(p_0)} D_d \vec{e}_i )$ is an even integer for each $i\leq d.$ We have
\begin{eqnarray*}
 &&\vec{q}_A\circ (I_{p_0 d}  M_{\vec{r}_0} ^{(p_0)} D_d \vec{e}_i )  \\
 &=&
   \left\{
\begin{array}{llll}
\vec{q}_A\circ \big((1+\varrho_i) \vec{e}_i \big) \quad & \texttt{ if }i\leq p_0-1, \\
\vec{q}_A\circ \vec{e}_d \quad & \texttt{ if } i=p_0,\\
\vec{q}_A\circ \vec{e}_i \quad & \texttt{ if } p_0+1 \leq i \leq d-1,\\
\vec{q}_A\circ (2\vec{e}_{p_0}) \quad & \texttt{ if }i=d.
\end{array} \right.\\
 &=&
   \left\{
\begin{array}{llll}
(\vec{e}_{p_0}+\sum_{j=1} ^{p_0 -1} \varrho_j \vec{e}_j)
\circ \big((1+\varrho_i) \vec{e}_i \big) \quad & \texttt{ if }i\leq p_0-1, \\
( \vec{e}_{p_0}+\sum_{j=1} ^{p_0 -1} \varrho_j \vec{e}_j)
\circ \vec{e}_d \quad & \texttt{ if } i=p_0,\\
( \vec{e}_{p_0}+\sum_{j=1} ^{p_0 -1} \varrho_j \vec{e}_j)
\circ \vec{e}_i \quad & \texttt{ if } p_0+1 \leq i \leq d-1,\\
( \vec{e}_{p_0}+\sum_{j=1} ^{p_0 -1} \varrho_j \vec{e}_j)
\circ (2\vec{e}_{p_0}) \quad & \texttt{ if }i=d.
\end{array} \right.\\
 &=&
   \left\{
\begin{array}{llll}
\varrho_i(1+\varrho_i) \quad & \texttt{ if }i\leq p_0-1, \\
0 \quad & \texttt{ if } i=p_0,\\
0 \quad & \texttt{ if } p_0+1 \leq i \leq d-1,\\
2 \quad & \texttt{ if }i=d.
\end{array} \right.\\
\end{eqnarray*}
The values of all above inner products are even integers.
So we proved that when $p_0 <d,$ $\vec{q}_A \circ A \Z^d$ is a set of even integers. Also,
\begin{equation*}
    \vec{q}_A \circ \vec{\ell}_A =
    \big(\vec{e}_{p_0}+\sum_{j=1} ^{p_0 -1} \varrho_j \vec{e}_j \big) \circ
    \vec{e}_{p_0} =1.
\end{equation*}
This implies that $\vec{q}_A \circ (\vec{\ell}_A + A \Z^d)$ is a set of odd integers.\\

Case (B) Now we assume $p_0=d.$ Then $I_{p_0 d} =I.$
In this case
\begin{eqnarray*}
  A &=& M_{\vec{r}_0} ^{(d)} D_d U_0,\\
  \vec{r}_0 &=& \sum_{j=1} ^{d-1} \varrho_j e_j, \varrho_j\in \{0,1\}.
\end{eqnarray*}

By the definition in Proof of Property (2)
\begin{eqnarray*}
  \vec{\ell}_A &=& \vec{e}_d.
\end{eqnarray*}
Define
\begin{equation*}
    \vec{q}_A\equiv \vec{e}_{d}+\sum_{j=1} ^{d -1} \varrho_j \vec{e}_j.
\end{equation*}
It is clear that $\vec{q}_A \circ \vec{\ell}_A = (\vec{e}_{d}+\sum_{j=1} ^{d -1} \varrho_j \vec{e}_j) \circ \vec{e}_d = 1.$ Denote $\vec{u}_i \equiv  M_{\vec{r}_0} ^{(d)} D_d \vec{e}_i, i=1,\cdots,d.$ Then the set $\{\vec{u}_i, i=1,\cdots, d\}$ is a generator for $A \Z^d.$ To complete the proof of Case (B), it is enough to show that $\vec{q}_A \circ \vec{u}_i$ is even integer for each $i\leq d.$ We have
\begin{eqnarray*}
  \vec{u}_i &=& M_{\vec{r}_0} ^{(d)} D_d \vec{e}_i, i=1,\cdots,d \\
&=& \left\{
\begin{array}{lll}
M_{\vec{r}_0} ^{(d)} \vec{e}_i \quad & \texttt{ if }i\leq d-1, \\
2M_{\vec{r}_0} ^{(d)}\vec{e}_d \quad & \texttt{ if }i=d.
\end{array} \right.\\
&=& \left\{
\begin{array}{lll}
(1+\varrho_i)\vec{e}_i \quad & \texttt{ if }i\leq d-1, \\
2\vec{e}_d \quad & \texttt{ if }i=d.
\end{array} \right.\\
\end{eqnarray*}
We have
\begin{eqnarray*}
  \vec{q}_A \circ \vec{u}_i &=& \left\{
\begin{array}{lll}
(\vec{e}_{d}+\sum_{j=1} ^{d -1} \varrho_j \vec{e}_j)\circ\big((1+\varrho_i)\vec{e}_i\big) \quad & \texttt{ if }i\leq d-1, \\
(\vec{e}_{d}+\sum_{j=1} ^{d -1} \varrho_j \vec{e}_j)\circ
(2\vec{e}_d) \quad & \texttt{ if }i=d.
\end{array} \right.\\
&=& \left\{
\begin{array}{lll}
(1+\varrho_i) \varrho_i \quad & \texttt{ if }i\leq d-1, \\
2 \quad & \texttt{ if }i=d.
\end{array} \right.
\end{eqnarray*}
Apparently, those are even numbers. Case (B) has been proved.

Proof of Property (4). Let $\vec{m}\in A\Z^d$. Since $A\Z^d$ is a group containing $(2\Z)^d,$ we have
$-A\Z^d=A\Z^d$ and  $-\vec{m}-\vec{n}+A\Z^d= \vec{n}+A\Z^d$. So
\begin{eqnarray}
\label{eq:(a)}
(\vec{n}-A\Z^d) \cup (\vec{\ell}_A-\vec{m}-\vec{n}+A\Z^d)
   &=&
(\vec{n}+A\Z^d) \cup (\vec{\ell}_A+\vec{n}+A\Z^d) .
\end{eqnarray}
If $\vec{n}\in A\Z^d$ then $\vec{n}+A\Z^d = A\Z^d$ and
$\vec{\ell}_A + \vec{n}+A\Z^d = \vec{\ell}_A +A \Z^d.$
If $\vec{n}\in \vec{\ell}_A+A\Z^d$ then $\vec{n}+A \Z^d =\vec{\ell}_A+ A\Z^d$ and
$\vec{\ell}_A + \vec{n}+A\Z^d = A \Z^d.$ So, in either cases we have
\begin{equation*}
(\vec{n}+A\Z^d) \cup (\vec{\ell}_A+\vec{n}+A\Z^d)
  =
(A\Z^d) \dotcup (\vec{\ell}_A+A\Z^d)=\Z^d.
\end{equation*}

Proof of Preperty (5) Let $\vec{m}\in \vec{\ell}_A+A \Z^d.$ Then $\vec{\ell}_A-\vec{m}\in A\Z^d = A\Z^d.$
So $\vec{\ell}_A-\vec{m}-\vec{n}+A\Z^d=\vec{n} - A\Z^d.$ This is \eqref{eq:eq}.

\end{proof}


\section{Reduction Theorem}

\bigskip

For $f,g\in L^1(\R^d) \cap L^2(\R^d),$ the Fourier-Plancherel Transform and Fourier Inverse Transform are defined as
\begin{eqnarray*}
(\F f)(\vec{s}) &\equiv& \frac{1}{(2\pi)^{d/2}} \int_{\R^d}e^{-i\vec{s}\circ\vec{t}}f(\vec{t})d\vt=\hat{f}(\vec{s}),\\
(\F^{-1} g)(\vec{t})&\equiv& \frac{1}{(2\pi)^{d/2}}\int_{\R^d}e^{i\vec{s}\circ\vec{t}}g(\vec{s})d\vec{s}=\check{g}(\vec{t}).\\
\end{eqnarray*}
The set $L^1(\R^d) \cap L^2(\R^d)$ is dense in $L^2(\R^d),$ the operator $\F$ extends to a unitary operator on $L^2(\R^d)$ which is still called Fourier Transform.
For an operator $V$ on $L^2(\R^d),$ we will use notation $\widehat{V}\equiv\F V \F^{-1}.$
We collect the following elementary equalities in Lemma \ref{lem:noproof}. We omit the proof.
\begin{lemma}
\label{lem:noproof}
Let $A$ be a $d\times d$ expansive integral matrix with $|\det(A)| =2$; $\vec{t},\vec{s}$ and $\vec{\ell}\in \R^d$
and $J\in\Z$.
For a $d\times d$ integral matrix $S$ of $|\det(S)|=1,$ assume $B=S^{-1}AS$.
Then
\begin{eqnarray}
\label{eq:P} U_S T_{\vec{\ell}}U_S^{-1} &=& T_{S^{-1}\vec{\ell}}.\\
\label{eq:Q} U_S D_A ^J U_S^{-1} &=& D_B ^J.\\
\label{eq:R}  T_{\vec{\ell}} D_A &=& D_AT_{A\vec{\ell}}.\\
\label{eq:multiplier}  \widehat{T}_{\vec{\ell}} &=& M_{e^{- i \vec{s}\circ\vec{\ell}}}.\\
  \widehat{D}_A &=& U_{{(A^{-1})}^\tau} = U_{{(A^\tau)}^{-1}}=D_{A^\tau}^{-1}=D_{A^\tau}^*.\\
\label{eq:3.6}  T_{A^{-J}\vec{\ell}} D_A^{J}&= &D_A ^J T_{\vec{\ell}}.\\
\label{eq:3.7}  \overline{\widehat{D}_A^{J}\widehat{\phi}(\vt)} &=&
\frac{1}{\sqrt{2^J}}\overline{\widehat{\phi}((A^\tau)^{-J}\vt)}, \phi\in L^2(\R^d).
\end{eqnarray}
where $M_{e^{- i \vec{s}\circ\vec{\ell}}}$ is the unitary multiplication operator by $e^{- i \vec{s}\circ\vec{\ell}}$.
\end{lemma}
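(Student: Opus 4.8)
The plan is to verify each of the displayed identities by a direct pointwise computation from the definitions of $T_{\vec\ell}$, $U_A$ (hence $D_A$), and $\F$. Since every operator occurring is unitary, it suffices to establish each identity on the dense subspace $L^1(\R^d)\cap L^2(\R^d)$ (or on Schwartz functions) and then extend by continuity; I would work on this dense set throughout, so that the Fourier integrals converge absolutely.

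First the identities involving only $T$ and $U$. For \eqref{eq:R}, writing $(T_{\vec\ell}D_Af)(\vt)=\sqrt2\,f(A\vt-A\vec\ell)=(D_AT_{A\vec\ell}f)(\vt)$ settles it immediately; an induction on $J\geq0$ then gives $T_{\vec\ell}D_A^{\,J}=D_A^{\,J}T_{A^J\vec\ell}$, and conjugating $T_{A^{-1}\vec\ell}D_A=D_AT_{\vec\ell}$ by $D_A^{-1}$ extends this to negative $J$, so that substituting $\vec\ell\mapsto A^{-J}\vec\ell$ yields \eqref{eq:3.6}. For \eqref{eq:P}, $|\det S|=1$ makes $U_S^{-1}=U_{S^{-1}}$ act by $(U_S^{-1}f)(\vt)=f(S^{-1}\vt)$, whence $(U_ST_{\vec\ell}U_S^{-1}f)(\vt)=f\bigl(S^{-1}(S\vt-\vec\ell)\bigr)=f(\vt-S^{-1}\vec\ell)$; the same bookkeeping together with $B=S^{-1}AS$ and $|\det B|=|\det A|=2$ gives $U_SD_AU_S^{-1}=D_B$, and raising to the $J$-th power gives \eqref{eq:Q}.

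For the Fourier identities: in $\F T_{\vec\ell}f$ the substitution $\vt\mapsto\vt+\vec\ell$ factors out $e^{-i\vec s\circ\vec\ell}$, giving $\F T_{\vec\ell}=M_{e^{-i\vec s\circ\vec\ell}}\F$, which is \eqref{eq:multiplier}. In $\F D_Af$ the substitution $\u=A\vt$, with $d\u=|\det A|\,d\vt=2\,d\vt$ and the identity $\vec s\circ A^{-1}\u=(A^\tau)^{-1}\vec s\circ\u$ recorded in the introduction, gives $(\F D_Af)(\vec s)=\frac{1}{\sqrt2}\hat f\bigl((A^\tau)^{-1}\vec s\bigr)=(U_{(A^\tau)^{-1}}\hat f)(\vec s)$; since $(A^{-1})^\tau=(A^\tau)^{-1}$ and $A^\tau$ is expansive with $|\det A^\tau|=2$ (so $U_{(A^\tau)^{-1}}=U_{A^\tau}^{-1}=D_{A^\tau}^{-1}$) and $D_{A^\tau}$ is unitary, the whole chain of equalities for $\widehat D_A$ follows. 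Finally $\widehat D_A^{\,J}=U_{(A^\tau)^{-J}}$, so evaluating on $\hat\phi$ and conjugating gives \eqref{eq:3.7}, the scalar $|\det(A^\tau)^{-J}|^{1/2}=2^{-J/2}$ being exactly $1/\sqrt{2^J}$.

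The only genuine difficulty is bookkeeping, not mathematics: one must keep the transpose/adjoint distinctions straight (the identity $\vec t_1\circ A\vec t_2=A^\tau\vec t_1\circ\vec t_2$ is used with $A$ real, so no conjugation enters), and one must check that \eqref{eq:R} and the identification $\widehat D_A=U_{(A^\tau)^{-1}}$ survive passage to negative powers $J$ — both do, since $A$ (hence $A^\tau$) is invertible and the assignments $\vec\ell\mapsto T_{\vec\ell}$, $S\mapsto U_S$ are (anti)homomorphisms, so that $U_BU_C=U_{CB}$. The passage from $L^1\cap L^2$ to $L^2$ is automatic because in each identity both sides are bounded operators agreeing on a dense subspace.
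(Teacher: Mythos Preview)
Your proposal is correct: each identity is a routine pointwise computation from the definitions, and your bookkeeping (the substitution $\vec u=A\vt$ in the Fourier integral, the use of $\vec s\circ A^{-1}\vec u=(A^\tau)^{-1}\vec s\circ\vec u$, the antihomomorphism $U_BU_C=U_{CB}$, and the induction/inversion for negative $J$) is accurate. The paper itself omits the proof, calling these ``elementary equalities,'' so your direct verification is exactly the intended approach.
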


\begin{theorem}
\label{theom:redsym}
Let $A$ be a $d\times d$ expansive integral matrix with $|\det(A)| =2$ and $S$ be a $d\times d$ integral matrix with property  that $|\det(S)|=1$. Let $B\equiv S^{-1}AS$.  Assume that
a function $\psi_A$ is a Parseval's frame wavelet associated with the matrix $A.$ Then the function $\eta_B \equiv U_S \psi_A$ is a Parseval's frame wavelet associated with the matrix $B.$
\end{theorem}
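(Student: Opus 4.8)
The plan is to verify the Parseval identity \eqref{eq:framew} for $\eta_B$ directly, transporting everything through the unitary $U_S$. The key observation is that $U_S$ is unitary on $L^2(\R^d)$, so inner products are preserved: $\langle f, D_B^n T_{\vec\ell}\eta_B\rangle = \langle U_S^{-1}f, U_S^{-1}D_B^n T_{\vec\ell} U_S\psi_A\rangle$ for every $f\in L^2(\R^d)$, $n\in\Z$, $\vec\ell\in\Z^d$. So the whole argument reduces to understanding the conjugated operator $U_S^{-1} D_B^n T_{\vec\ell} U_S$.

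First I would compute that conjugation using Lemma \ref{lem:noproof}. Writing $D_B^n T_{\vec\ell} = D_B^n T_{\vec\ell}$ and inserting $U_S U_S^{-1}$ between the two factors, we get
\begin{equation*}
U_S^{-1} D_B^n T_{\vec\ell} U_S = (U_S^{-1} D_B^n U_S)(U_S^{-1} T_{\vec\ell} U_S).
\end{equation*}
By \eqref{eq:Q} applied with the roles arranged so that $A = S B S^{-1}$ — equivalently replacing $S$ by $S^{-1}$ in that lemma, which is legitimate since $|\det(S^{-1})|=1$ and $(S^{-1})^{-1}A S^{-1}\dots$ — we obtain $U_S^{-1} D_B^n U_S = D_A^n$; and by \eqref{eq:P}, similarly adjusted, $U_S^{-1} T_{\vec\ell} U_S = T_{S\vec\ell}$. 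Hence
\begin{equation*}
U_S^{-1} D_B^n T_{\vec\ell} U_S = D_A^n T_{S\vec\ell}.
\end{equation*}
The one point needing a little care is that Lemma \ref{lem:noproof} is stated with $B = S^{-1}AS$ and conjugation by $U_S$; here I am conjugating by $U_S^{-1} = U_{S^{-1}}$, so I should either re-cite the lemma with $S$ replaced by $S^{-1}$ (noting $A = (S^{-1})^{-1} B (S^{-1})$) or take adjoints/inverses of the stated identities. Either way the formulas \eqref{eq:P} and \eqref{eq:Q} hold in the form I need since they are genuine operator identities valid for any $|\det|=1$ integral matrix.

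Now substitute back: for any $f\in L^2(\R^d)$, setting $h\equiv U_S^{-1}f$,
\begin{equation*}
\sum_{n\in\Z,\ \vec\ell\in\Z^d} |\langle f, D_B^n T_{\vec\ell}\eta_B\rangle|^2
= \sum_{n\in\Z,\ \vec\ell\in\Z^d} |\langle h, D_A^n T_{S\vec\ell}\psi_A\rangle|^2.
\end{equation*}
Because $S$ is an integral matrix with $|\det(S)|=1$, the map $\vec\ell \mapsto S\vec\ell$ is a bijection of $\Z^d$ onto $\Z^d$ (its inverse $S^{-1}$ is also integral), so the index set $\{S\vec\ell : \vec\ell\in\Z^d\}$ is exactly $\Z^d$ and the sum over $\vec\ell$ is merely reindexed, not changed. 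Therefore the right-hand side equals $\sum_{n\in\Z,\ \vec m\in\Z^d} |\langle h, D_A^n T_{\vec m}\psi_A\rangle|^2$, which equals $\|h\|^2$ since $\psi_A$ is a Parseval frame wavelet for $A$. Finally $\|h\|^2 = \|U_S^{-1}f\|^2 = \|f\|^2$ by unitarity of $U_S$. This establishes \eqref{eq:framew} for $\eta_B$ and hence $\eta_B$ is a Parseval frame wavelet associated with $B$.

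I do not expect any real obstacle here; the argument is entirely formal. The only place demanding attention is bookkeeping the direction of conjugation in applying Lemma \ref{lem:noproof} (conjugating by $U_S^{-1}$ versus $U_S$), and observing explicitly that $\vec\ell\mapsto S\vec\ell$ permutes $\Z^d$ so that no translation index is lost or repeated — both routine once stated. If one wanted to be maximally economical, one could also phrase the whole thing as: $U_S$ conjugates the system $\{D_A^nT_{\vec m}\psi_A\}$ onto $\{D_B^nT_{\vec\ell}\eta_B\}$ as an indexed family (up to the relabeling $\vec m = S\vec\ell$), and a unitary carries a Parseval frame to a Parseval frame.
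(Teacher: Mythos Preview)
Your proof is correct and follows essentially the same route as the paper: transport the frame system through the unitary $U_S$, use the conjugation identities \eqref{eq:P}--\eqref{eq:Q} to turn $D_B^n T_{\vec\ell}$ into $D_A^n T_{S\vec\ell}$, and then reindex via the bijection $\vec\ell\mapsto S\vec\ell$ of $\Z^d$. The only cosmetic difference is that the paper verifies the reconstruction formula \eqref{eq:frameweq} while you verify the equivalent Parseval identity \eqref{eq:framew}.
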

(For the proof, see Appendix \textsc{Proof of Theorem \ref{theom:redsym}}).\\

\section{ Lawton's Equations and Filter Function $m_0(\vec{t})$}
\bigskip

Through out the rest of this paper, $A$ will be a $d\times d$ expansive integral matrix with $|\det (A)|=2$ satisfying the equations \eqref{eq:(1)},\eqref{eq:(2)},\eqref{eq:(3)}, \eqref{eq:partition} and \eqref{eq:eq}
in Partition Theorem.
Let $\mathcal{S}=\{h_{\vec{n}}:~{\vec{n}\in\Z^d}\}$ be a \textit{finite} complex solution to the system of equations
\begin{equation}
\label{eq:lawtoneq}
\left\{\begin{array}{l}
\sum_{\vec{n}\in\Z^d}h_{\vec{n}}\overline{h_{\vec{n}+\vec{k}}}=\delta_{\vec{0} \vec{k}},~ \vec{k}\in A\Z^d \\
\sum_{\vec{n}\in\Z^d}h_{\vec{n}}=\sqrt{2}.
\end{array}\right.
\end{equation}
Here  $\delta$ is  the Kronecker delta.
The solution is finite if  $h_{\vec{n}}=0$ for all
$\vec{n}\notin \Lambda_0 \equiv \Z^d \cap [-N_0,N_0]^d$ for some $N_0 \in\N$.
Wayne Lawton introduced \cite{lawton} a system of equations for Parseval's frame wavelet in $L^2(\R).$ The equations \eqref{eq:lawtoneq} are its generalization in higher dimensions.
We will call the system of equations
\eqref{eq:lawtoneq}
\textit{Lawton's system of equations for Parseval's frame wavelets associated with matrix $A$}, or \textit{Lawton's equations associated with matrix $A$}.

Define
\begin{equation}
\label{eq:m0}
    m_0(\vec{t}) \equiv \frac{1}{\sqrt{2}}\sum_{\vec{n}\in\Z^d} h_{\vec{n}} e^{-i\vec{n}\circ\vec{t}}
    =\frac{1}{\sqrt{2}}\sum_{\vec{n}\in\Lambda_0} h_{\vec{n}} e^{-i\vec{n}\circ\vec{t}},\ \vec{t} \in \C^d.
\end{equation}
This is a finite sum and $m_0(\vec{0})=1$. It is a $2\pi$-periodic trigonometric polynomial function in the sense that $m_0(\vec{t})=m_0(\vec{t}+\pi\vec{t}_0), \ \forall \vec{t}_0\in (2\Z)^d.$

\begin{proposition}
\label{prop:filter}
Let $A$ be an expansive $d \times d$ integral matrix and $\vec{q}_A  $ is as stated in
Partition Theorem.
Let $m_0(\vec{t})$ be  defined as in \eqref{eq:m0}, then
\begin{equation}\label{m0eq}
 |m_0(\vec{t})|^2+|m_0(\vec{t}+\pi \vec{q}_A)|^2=1, \ \forall \vec{t}\in\R^d.
 \end{equation}
\end{proposition}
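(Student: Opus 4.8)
The plan is to expand $|m_0(\vec{t})|^2 + |m_0(\vec{t}+\pi\vec{q}_A)|^2$ directly using the definition \eqref{eq:m0}, collect terms by the difference of the two summation indices, and then invoke the orthogonality relations in Lawton's equations \eqref{eq:lawtoneq} together with the partition properties (2) and (3) of the Partition Theorem. First I would write
\[
|m_0(\vec{t})|^2 = \frac{1}{2}\sum_{\vec{n},\vec{m}\in\Lambda_0} h_{\vec{n}}\overline{h_{\vec{m}}}\, e^{-i(\vec{n}-\vec{m})\circ\vec{t}},
\]
and similarly for the shifted term, where the factor $e^{-i(\vec{n}-\vec{m})\circ(\vec{t}+\pi\vec{q}_A)} = e^{-i(\vec{n}-\vec{m})\circ\vec{t}}\,e^{-i\pi\,\vec{q}_A\circ(\vec{n}-\vec{m})}$. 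Setting $\vec{k}=\vec{n}-\vec{m}$ and summing first over $\vec{m}$, the sum of the two expressions becomes
\[
\frac{1}{2}\sum_{\vec{k}\in\Z^d}\Big(\sum_{\vec{m}} h_{\vec{m}+\vec{k}}\overline{h_{\vec{m}}}\Big)\big(1 + (-1)^{\vec{q}_A\circ\vec{k}}\big) e^{-i\vec{k}\circ\vec{t}}.
\]

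The next key step is the case analysis on $\vec{k}$ coming from the partition. By property (2), every $\vec{k}\in\Z^d$ lies either in $A\Z^d$ or in $\vec{\ell}_A + A\Z^d$. By property (3), $\vec{q}_A\circ\vec{k}$ is even in the first case and odd in the second, so the factor $1+(-1)^{\vec{q}_A\circ\vec{k}}$ equals $2$ when $\vec{k}\in A\Z^d$ and vanishes when $\vec{k}\in\vec{\ell}_A+A\Z^d$. Hence only $\vec{k}\in A\Z^d$ survives, and the expression collapses to
\[
\sum_{\vec{k}\in A\Z^d}\Big(\sum_{\vec{m}\in\Z^d} h_{\vec{m}+\vec{k}}\overline{h_{\vec{m}}}\Big) e^{-i\vec{k}\circ\vec{t}}.
\]
Now the first equation of \eqref{eq:lawtoneq} (after a harmless relabeling $\vec{n}=\vec{m}+\vec{k}$, using that $\overline{\sum_{\vec n} h_{\vec n}\overline{h_{\vec n + \vec k}}} = \sum_{\vec m} h_{\vec m + \vec k}\overline{h_{\vec m}}$ and $\delta_{\vec 0 \vec k}$ is real) tells us the inner coefficient equals $\delta_{\vec{0}\vec{k}}$, so the whole sum equals the single term with $\vec{k}=\vec{0}$, namely $1$. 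This proves \eqref{m0eq}.

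I expect the main obstacle to be purely bookkeeping: making sure the index shift $\vec{k}=\vec{n}-\vec{m}$ ranges correctly (it runs over a finite subset of $\Z^d$ since $\Lambda_0$ is finite, so interchanging the order of the double sum is unproblematic), and checking that the conjugation in $\overline{h_{\vec m}}$ is matched against the right factor so that Lawton's first equation applies verbatim rather than to its complex conjugate — but since the right-hand side $\delta_{\vec 0\vec k}$ is real and symmetric in the obvious way, this causes no difficulty. The only genuinely substantive input is the conjunction of partition properties (2) and (3), which is exactly what forces the cross terms indexed by $\vec{\ell}_A + A\Z^d$ to cancel; everything else is a direct computation. (This is the $d$-dimensional analogue of the classical quadrature-mirror-filter identity, and the proof is the standard one adapted to the lattice $A\Z^d$ in place of $2\Z$.)
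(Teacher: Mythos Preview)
Your proof is correct and follows essentially the same route as the paper: expand the two squared moduli, reindex by the difference $\vec{k}$, use properties (2) and (3) of the Partition Theorem to kill the $\vec{\ell}_A+A\Z^d$ terms via the factor $1+(-1)^{\vec{q}_A\circ\vec{k}}$, and finish with the first Lawton equation. The paper's argument is identical up to notation (it sets $\vec{k}=\vec{n}-\vec{m}$ with the roles of the two indices swapped, yielding $h_{\vec m}\overline{h_{\vec m+\vec k}}$ directly), and your remark about the harmless conjugation is exactly the right observation.
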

\begin{corollary}\label{less1}
\begin{equation}
|m_0(\vec{t})|\leq 1,\ \forall\vec{t}\in\R^d.
\end{equation}
\end{corollary}
(For the proof, see Appendix \textsc{Proof of Proposition \ref{prop:filter}}.)\\

\section{The Scaling Function $\varphi$ in $L^2(\R^d)$}\label{ss:phil2}
\bigskip

\begin{definition}
\label{def:g}
Define
 \begin{equation*}
 g(\vec{\xi}) \equiv \frac{1}{(2\pi)^{d/2}}\prod_{j=1}^{\infty}m_0((A^{\tau})^{-j} \vec{\xi}), \forall\vec{\xi}\in\R^d,
\end{equation*}
and
\begin{equation}
\label{def:phi}
\varphi = \F^{-1} g.
\end{equation}
\end{definition}

In this section we will prove that $g$  is an $L^2(\R^d)$-function. Hence by Plancherel Theorem $\varphi$ is also in $L^2(\R^d).$ This provides the base for further construction in this paper. The proof we used in \cite{dai2d} (for case $d=2$) no longer works for our general cases here. We will call $\varphi$ the \textit{scaling function}.
We will use $\Gamma_\pi$ to denote $[-\pi,\pi)^d.$\\
\begin{lemma}
\label{lem:sesami}
Let $f$ be a $2\pi$-periodical continuous function on $\R^d$ and let $\vec{\gamma}\in \R^d.$ Then
\begin{equation*}
    \int_{\Gamma_\pi + \vec{\gamma}} f(\vt) d \vt =
    \int_{\Gamma_\pi } f(\vt) d \vt.
\end{equation*}
\end{lemma}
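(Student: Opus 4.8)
The plan is to reduce the statement to the one-dimensional case via Fubini's theorem, and in one dimension to exploit the periodicity directly. Specifically, write $\vec\gamma = \sum_{j=1}^d \gamma_j \vec e_j$ and introduce the intermediate shifts $\vec\gamma^{(m)} \equiv \sum_{j=1}^m \gamma_j \vec e_j$ for $m=0,1,\dots,d$, so that $\vec\gamma^{(0)} = \vec 0$ and $\vec\gamma^{(d)} = \vec\gamma$. It suffices to show, for each $m$, that
\begin{equation*}
\int_{\Gamma_\pi + \vec\gamma^{(m)}} f(\vt)\,d\vt = \int_{\Gamma_\pi + \vec\gamma^{(m-1)}} f(\vt)\,d\vt,
\end{equation*}
since chaining these $d$ equalities gives the result. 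Each such step changes the domain only in the $m$-th coordinate, so by Fubini one integrates first over the other $d-1$ coordinates (over a fixed product of intervals of length $2\pi$) and is left with showing $\int_{a}^{a+2\pi} \tilde f(x)\,dx = \int_{-\pi}^{\pi} \tilde f(x)\,dx$ for the $2\pi$-periodic continuous function $\tilde f$ of a single real variable obtained by freezing the remaining coordinates.

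First I would establish the one-dimensional fact: if $h$ is $2\pi$-periodic and continuous on $\R$, then $\int_a^{a+2\pi} h = \int_{-\pi}^{\pi} h$ for every $a\in\R$. The standard argument is that $G(a) \equiv \int_a^{a+2\pi} h(x)\,dx$ has derivative $G'(a) = h(a+2\pi) - h(a) = 0$ by periodicity, so $G$ is constant; evaluating at $a=-\pi$ gives the claim. Alternatively, split $[a,a+2\pi)$ at a point of the form $-\pi + 2\pi k$ and use the change of variables $x\mapsto x-2\pi$ on one piece together with periodicity to reassemble $[-\pi,\pi)$; either route is routine. Continuity guarantees all integrals are finite and that the fundamental-theorem-of-calculus step is legitimate; in fact local integrability would suffice, but continuity is what is assumed here.

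Second I would assemble the pieces carefully. In the $m$-th step, write points as $\vt = (\vt', x, \vt'')$ where $\vt'$ runs over the first $m-1$ coordinates and $\vt''$ over the last $d-m$; the domain $\Gamma_\pi + \vec\gamma^{(m-1)}$ has $x$ ranging over $[-\pi,\pi)$ while $\Gamma_\pi + \vec\gamma^{(m)}$ has $x$ ranging over $[-\pi+\gamma_m, \pi+\gamma_m)$, with the $\vt'$ and $\vt''$ ranges identical in both (each a translate of $[-\pi,\pi)^{m-1}$, resp. $[-\pi,\pi)^{d-m}$). Fix $\vt',\vt''$; the function $x\mapsto f(\vt',x,\vt'')$ is continuous and $2\pi$-periodic in $x$ because $f$ is $2\pi$-periodic in each variable, so the one-dimensional lemma gives equality of the inner integrals, and integrating over $\vt',\vt''$ finishes the step.

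The main obstacle is essentially bookkeeping rather than mathematics: making the Fubini reduction precise and confirming that translating the cube one coordinate at a time does not disturb the ranges of the other coordinates, so that the inner integrals being compared really are integrals of the same function over $2\pi$-length intervals. There is no genuine analytic difficulty — continuity and $2\pi$-periodicity are exactly the hypotheses needed, and no convergence or measure-theoretic subtlety arises beyond the elementary use of Fubini on a bounded continuous function over a bounded box.
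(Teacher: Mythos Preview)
Your proof is correct and takes a genuinely different route from the paper's. The paper argues directly in $d$ dimensions by a tiling argument: it covers $\R^d$ by the disjoint translates $\Gamma_\pi + 2\pi\vec n$, $\vec n\in\Z^d$, observes that $\Gamma_\pi+\vec\gamma$ meets finitely many of them, and shows that shifting each intersection back by its lattice vector $2\pi\vec n$ produces a partition of $\Gamma_\pi$; periodicity then matches the integrals piece by piece. Your approach instead telescopes through the coordinates, using Fubini to reduce to the classical one-variable fact $\int_a^{a+2\pi}h=\int_{-\pi}^{\pi}h$, which you dispatch via the fundamental theorem of calculus. The paper's argument is purely measure-theoretic (no differentiation, no Fubini iteration) and yields as a byproduct the explicit set-level decomposition of the translated cube, which is in the spirit of the partition arguments used elsewhere in the section; your argument is shorter and leans on a fact every reader already knows from single-variable calculus. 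Both are entirely adequate here.
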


\begin{proof}
Denote $\Gamma_\pi ^{\mbox{\tiny ($ \vec{n}) $ }} \equiv \Gamma_\pi + \pi \vec{n}, \ \vec{n}\in\Z^d.$  The family
$\{\Gamma_\pi ^{\mbox{\tiny ($ \vec{n}) $ }}, \vec{n}\in(2\Z)^d\}$ is a partition of $\R^d.$
Since $\Gamma_\pi + \vec{\gamma}$ is bounded there is a finite subset
$\vec{\Lambda} \subset (2\Z)^d,$
\begin{equation*}
    \bigdotcup _{\vec{n}\in \vec{\Lambda}} (\Gamma_\pi + \vec{\gamma})\cap \Gamma_\pi ^{\mbox{\tiny ($ \vec{n}) $ }} = \Gamma_\pi + \vec{\gamma}.
\end{equation*}
Denote $\Gamma_{\pi, {\mbox{\tiny $ \vec{n} $ }}} \equiv (\Gamma_\pi + \vec{\gamma})\cap \Gamma_\pi ^{\mbox{\tiny ($ \vec{n}) $ }} - \pi \vec{n}.$ We claim that the family $\{\Gamma_{\pi, {\mbox{\tiny $ \vec{n} $ }}}, \vec{n}\in\vec{\Lambda}\}$ is a partition of $\Gamma_\pi.$ It is clear that
$\Gamma_{\pi, {\mbox{\tiny $ \vec{n} $ }}} \subseteq \Gamma_\pi, \ \forall \vec{n} \in \vec{\Lambda}.$
Assume that for some different $\vec{n}_1,\vec{n}_2 \in \vec{\Lambda},$
there is a vector
$\vec{u}\in\Gamma_{\pi, {\mbox{\tiny $ \vec{n}_1 $ }}} \cap
\Gamma_{\pi, {\mbox{\tiny $ \vec{n}_2 $ }}} . $
This implies that $\vec{u}+\pi\vec{n}_1 \in \Gamma_\pi +\vec{\gamma}$ and
$\vec{u}+\pi\vec{n}_2 \in \Gamma_\pi +\vec{\gamma},$ or
$\vec{u}-\vec{\gamma}\in \Gamma_\pi ^{\mbox{\tiny ($ -\vec{n}_1) $ }} \cap
\Gamma_\pi ^{\mbox{\tiny ($ -\vec{n}_2) $ }}.$ This is impossible since
$\{\Gamma_\pi ^{\mbox{\tiny ($ \vec{n}) $ }}, \vec{n}\in(2\Z)^d\}$ is a partition of $\R^d.$

Now we have
\begin{eqnarray*}
  \int_{\Gamma_\pi } f(\vt) d \vt
&=& \int_{\dotcup_{\vec{n}\in\vec{\Lambda}} \Gamma_{\pi, {\mbox{\tiny $ \vec{n} $ }}}} f(\vt) d \vt\\
&=& \sum_{\vec{n}\in\vec{\Lambda}}
\int_{(\Gamma_\pi + \vec{\gamma})\cap \Gamma_\pi ^{\mbox{\tiny $ (\vec{n}) $ }} - \pi \vec{n}} f(\vt) d \vt\\
&=& \sum_{\vec{n}\in\vec{\Lambda}}
\int_{(\Gamma_\pi + \vec{\gamma})\cap \Gamma_\pi ^{\mbox{\tiny $ (\vec{n}) $ }}} f(\vt) d \vt\\
&=&
\int_{\dotcup_{\vec{n}\in\vec{\Lambda}}(\Gamma_\pi + \vec{\gamma})\cap \Gamma_\pi ^{\mbox{\tiny $ (\vec{n}) $ }}} f(\vt) d \vt\\
&=& \int_{\Gamma_\pi +\vec{\gamma} } f(\vt) d \vt.
\end{eqnarray*}

\end{proof}

\begin{lemma}
\label{lem:A}
Let $M$ be a $d\times d$ integral matrix in $\mathfrak{G}$ and $G$ be a $d\times d$ integral matrix with $\det(G) \neq 0$.
Then
\begin{enumerate}
  \item  There exists a finite partition of $\Gamma_\pi$, $\{\Delta_j, j\in\Lambda_M\}$ and a subset of elements of $(2\Z)^d,$ $\{\vec{n}_j,j\in\Lambda_M\}$ such that
\begin{equation}
\label{eq:Psecond}
    M\Gamma_\pi = \bigdotcup_{j\in\Lambda_M} (\Delta_j +\pi \vec{n}_j).
\end{equation}
  \item Let $f$ be a $2\pi$-periodical continuous function on $\R^d.$ Then
  \begin{equation}
  \label{eq:B}
\int_{GM \Gamma_\pi} f(\vt) d \vt =  \int_{G\Gamma_\pi} f(\vt) d \vt
\end{equation}
and
  \begin{equation}
  \label{eq:CC}
\int_{M \Gamma_\pi} f(\vt) d \vt =  \int_{\Gamma_\pi} f(\vt) d \vt
\end{equation}
\end{enumerate}
\end{lemma}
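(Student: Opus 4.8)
The plan is to reduce both statements to a single structural fact: $\Gamma_\pi=[-\pi,\pi)^d$ is an \emph{exact} (not merely a.e.) fundamental domain for the translation action of the lattice $\pi(2\Z)^d=(2\pi\Z)^d$ on $\R^d$, so that $\R^d=\bigdotcup_{\vec{n}\in(2\Z)^d}(\Gamma_\pi+\pi\vec{n})$, exactly as used in the proof of Lemma \ref{lem:sesami}. Since $M\in\mathfrak{G}\subset\mathfrak{M}$, both $M$ and $M^{-1}$ are integral, hence $M\Z^d=\Z^d$ and therefore $M(2\Z)^d=2M\Z^d=(2\Z)^d$. Applying the linear bijection $M$ to the tiling above and re-indexing by $\vec{m}=M\vec{n}$, I get $\R^d=\bigdotcup_{\vec{m}\in(2\Z)^d}(M\Gamma_\pi+\pi\vec{m})$; that is, $M\Gamma_\pi$ is itself an exact fundamental domain for $(2\pi\Z)^d$. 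This is the only place where the hypothesis $M\in\mathfrak{G}$ (rather than an arbitrary integral matrix) is used.

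For part (1): $M\Gamma_\pi$ is bounded, so only finitely many $\vec{n}\in(2\Z)^d$ give $(M\Gamma_\pi)\cap(\Gamma_\pi+\pi\vec{n})\neq\emptyset$; I index these by a finite set $\Lambda_M$, call them $\vec{n}_j$, and set $\Delta_j\equiv(M\Gamma_\pi-\pi\vec{n}_j)\cap\Gamma_\pi\subseteq\Gamma_\pi$. Equation \eqref{eq:Psecond} is then immediate from the definitions together with the tiling $\R^d=\bigdotcup_{\vec{n}\in(2\Z)^d}(\Gamma_\pi+\pi\vec{n})$, since $M\Gamma_\pi\cap(\Gamma_\pi+\pi\vec{n}_j)=\Delta_j+\pi\vec{n}_j$. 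That $\{\Delta_j\}_{j\in\Lambda_M}$ is a partition of $\Gamma_\pi$ follows because each $\vec{x}\in\Gamma_\pi$ lies in exactly one translate $M\Gamma_\pi-\pi\vec{n}$, $\vec{n}\in(2\Z)^d$, by the fundamental-domain property of $M\Gamma_\pi$ established above, hence in exactly one $\Delta_j$.

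For part (2): I apply the linear bijection $G$ (injective because $\det G\neq 0$) to the partition of part (1), obtaining $GM\Gamma_\pi=\bigdotcup_{j\in\Lambda_M}(G\Delta_j+\pi G\vec{n}_j)$ with $\{G\Delta_j\}$ a partition of $G\Gamma_\pi$. Because $G$ is integral and $\vec{n}_j\in(2\Z)^d$, the vector $G\vec{n}_j$ lies in $2G\Z^d\subseteq(2\Z)^d$, so $\pi G\vec{n}_j$ is a genuine period of $f$; changing variables $\vt\mapsto\vt+\pi G\vec{n}_j$ in each summand yields
\[
\int_{GM\Gamma_\pi} f(\vt)\,d\vt=\sum_{j\in\Lambda_M}\int_{G\Delta_j} f(\vt+\pi G\vec{n}_j)\,d\vt=\sum_{j\in\Lambda_M}\int_{G\Delta_j} f(\vt)\,d\vt=\int_{G\Gamma_\pi} f(\vt)\,d\vt,
\]
which is \eqref{eq:B}; \eqref{eq:CC} is the case $G=I$ (and could alternatively be deduced from Lemma \ref{lem:sesami} applied to the finitely many translates).

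I do not expect a serious obstacle; the content is bookkeeping, but two points need care and are where a careless argument fails. First, one must use the half-open cube and the equality $M(2\Z)^d=(2\Z)^d$ so that $\Gamma_\pi$ and $M\Gamma_\pi$ are \emph{exact} fundamental domains — otherwise the decomposition in (1) holds only up to measure zero, which suffices for (2) but not for the stated set identity \eqref{eq:Psecond}. Second, for \eqref{eq:B} it is not enough that $G$ be integral; one genuinely needs $\pi G\vec{n}_j\in(2\pi\Z)^d$, i.e.\ that $G$ maps $(2\Z)^d$ into $(2\Z)^d$, which is why I record $G\vec{n}_j\in 2G\Z^d\subseteq(2\Z)^d$. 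Finiteness of $\Lambda_M$, disjointness, and the change of variables are then routine.
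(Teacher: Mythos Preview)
Your proof is correct and takes a genuinely different route from the paper. The paper proves part~(1) by case analysis on the generators in $\mathfrak{G}$: for $S_p$ and $I_{ij}$ it observes that $M\Gamma_\pi=\Gamma_\pi$ outright; for $I\pm\Delta_{ij}$ it reduces (via conjugation by permutation matrices) to $M=I+\Delta_{12}$ and then explicitly cuts the resulting parallelogram-cylinder $P\times[-\pi,\pi)^{d-2}$ into two triangular prisms that translate by $\pm 2\pi\vec{e}_1$ back into $\Gamma_\pi$, with a picture in the $X_1X_2$-plane. You instead give a single abstract argument: since $M\in\mathfrak{M}$ forces $M(2\Z)^d=(2\Z)^d$, the image $M\Gamma_\pi$ is itself an exact fundamental domain for $(2\pi\Z)^d$, and intersecting the two fundamental domains $\Gamma_\pi$ and $M\Gamma_\pi$ with each other's tilings yields the finite matching partition. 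Your approach is cleaner, avoids geometry and pictures, and in fact works for any $M\in\mathfrak{M}$, not just the generators; the paper's approach has the modest advantage of producing explicit pieces (three per shear matrix). Part~(2) is argued identically in both: apply $G$, use $G\vec{n}_j\in(2\Z)^d$, and invoke the $2\pi$-periodicity of $f$.
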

\begin{proof}  (1)
It is clear that
\begin{eqnarray}
\label{eq:I12}  I_{i j} \Gamma_{\pi} &=& \Gamma_{\pi},\ i,j\leq d, \\
  S_p \Gamma_{\pi} &=& \Gamma_{\pi},\ p\leq d ,
\end{eqnarray}
since $I_{i j}\Gamma_{\pi}$, $S_p \Gamma_{\pi}$ have the same vertices as of $\Gamma_{\pi}.$

Let $M\equiv I+\Delta_{i j}, \ i\neq j$ and $i,j \leq d.$
Since $I+\Delta_{1 2} = I_{2 j} I_{1 i} (I+\Delta_{i j})I_{1 i} I_{2 j}$, by \eqref{eq:I12}
we can assume $M=I+\Delta_{1 2}.$
The set $M \Gamma_\pi$ is a cylinder
in $d\times d$ space. We have $M\Gamma_\pi = P \times [-\pi,\pi]^{d-2}$ where $P$  is
the orthogonal project of $M\Gamma_\pi$ into the two dimensional coordinate plane $X_1 \times X_2$.
The set $P$ is a parallelogram (Figure 1 left). The set $P$ contains two disjoint triangles $\triangle_+$ and $\triangle_-.$
$P$ is the disjoint union of $\{\triangle_+, \triangle_-, P\backslash(\triangle_+ \dotcup \triangle_-)\}.$
Also $\Gamma_\pi$
 is disjoint union of $\{\triangle_+ - 2\pi \vec{e}_1, \triangle_- + 2\pi \vec{e}_1, P\backslash(\triangle_+ \dotcup \triangle_-)\}$  (Figure 1 right).

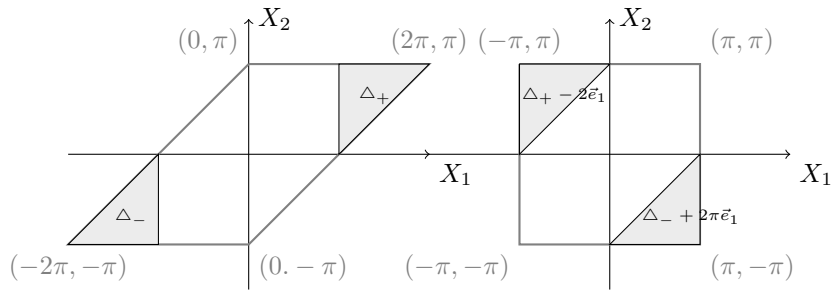
\begin{figure}[h]
\begin{tikzpicture}[scale=.6]
\draw[->] (-3.5,0)-- (12.5,0)
node[below right] {$X_1$};
\draw[->] (4.5,0)-- (4.51,0)
node[below right] {$X_1$};
\draw[->] (0.5,-3)-- (0.5,3)
node[right] {$X_2$};
\draw[->] (8.5,-3)-- (8.5,3)
node[right] {$X_2$};
\draw[step=.25cm,gray,thick] (0.5+0,2)-- (0.5+4,2)node[above] {$(2\pi,\pi)$}-- (0.5+0,-2)node[below right] {$(0.-\pi)$}-- (0.5-4,-2)node[below] {$(-2\pi,-\pi)$}-- (0.5+0,2)node[above left] {$(0,\pi)$};
\draw[step=.25cm,gray,thick] (6.5,2)-- (10.5,2)node[above right] {$(\pi,\pi)$}-- (8.5+2,-2)node[below right] {$(\pi,-\pi)$}-- (6.5,-2)node[below left] {$(-\pi,-\pi)$}-- (6.5+0,2)node[above] {$(-\pi,\pi)$};
\draw[fill=gray!15] (0.5-4,-2) -- (0.5-4+2,-2) -- (0.5-4+2,0) -- cycle;
\node at (-2-.1,-2+.6) {{\tiny$\triangle_-$}};
\draw[fill=gray!15] (0.5-4+4+8,-2) -- (0.5-4+4+8+2,-2) -- (0.5-4+4+8+2,0) -- cycle;
\node at (-2-.7+13,-2+.6) {{\tiny $\triangle_- +2\pi \vec{e}_1$}};
\draw[fill=gray!15] (6.5,0) -- (6.5,2) -- (8.5,2) -- cycle;
\draw[fill=gray!15] (2.5,0) -- (2.5,2) -- (4.5,2) -- cycle;
\node at (3.3,1+.3) {{\tiny $\triangle_+$}};
\node at (3.5+4,1+.3) {{\tiny $\triangle_+ -2\vec{e}_1$}};
\end{tikzpicture}
\caption{$X_1 \times X_2$ Coordinate Plane}
\end{figure}

Therefore
\begin{itemize}
  \item $\{\triangle_+ \times [-\pi,\pi]^{d-2},\triangle_- \times [-\pi,\pi]^{d-2},P\backslash(\triangle_+ \dotcup \triangle_-) \times [-\pi,\pi]^{d-2} \}$
is a partition of $M\Gamma_\pi$;
  \item $\{\triangle_+ \times [-\pi,\pi]^{d-2}-2\pi \vec{e}_1,\triangle_- \times [-\pi,\pi]^{d-2}+2\pi \vec{e}_1, P\backslash(\triangle_+ \dotcup \triangle_-) \times [-\pi,\pi]^{d-2} \}$
is a partition of $\Gamma_\pi$.
\end{itemize}
The proof for the case $M=I-\Delta_{i j}$ is similar to this. We omit it. \\

(2) Since $G:\R^d\rightarrow \R^d $ by $\vt\mapsto G\vt$ is linear and one-to-one, we have
\begin{eqnarray*}
GM\Gamma_\pi &=& G\big( \bigdotcup (\Delta_j +\pi \vec{n}_j) \big) = \bigdotcup (G\Delta_j +\pi G\vec{n}_j).
\end{eqnarray*}
Also, $\pi G\vec{n}_j \in\pi (2\Z)^d$ and $f$ is $2\pi$-periodical, so we have
\begin{eqnarray*}
\int_{GM \Gamma_\pi} f(\vt) d \vt
&=&   \int_{\dotcup (G\Delta_j +\pi G\vec{n}_j)} f (\vt) d \vt\\
&=&   \int_{G(\dotcup\Delta_j )} f (\vt) d \vt\\
&=&   \int_{G\Gamma_\pi} f (\vt) d \vt
\end{eqnarray*}
Replace $G$ by $I$ in \eqref{eq:B}, we obtain \eqref{eq:CC}.

\end{proof}

\begin{lemma}
\label{lem:C}
Let $A$ be a $d\times d$ expansive integral matrix with $|\det (A)|=2$, and let $f$ be a $2\pi$-periodical continuous function on $\R^d.$
Then
\begin{equation*}
\int_{A \Gamma_\pi} f d \vt = 2 \int_{\Gamma_\pi} f d \vt
\end{equation*}
\end{lemma}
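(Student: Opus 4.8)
The plan is to reduce the statement, via the factorization of Proposition~\ref{prop:see02}, to the single elementary matrix $D_d = I+\Delta_{dd}$ of Section~\ref{ss:Matrices}, for which the integral can be computed by hand, and then to transport this computation back using Lemmas~\ref{lem:sesami} and~\ref{lem:A}.

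First I would invoke Proposition~\ref{prop:see02}, equation~\eqref{eq:prop3.1eq3}: since $|\det(A)|=2$ we may write $A = V_1V_2\cdots V_{n_0}\,D_d\,U_1U_2\cdots U_{m_0}$ with every $V_j,U_i\in\mathfrak{G}$. Applying equation~\eqref{eq:B} of Lemma~\ref{lem:A} exactly $m_0$ times — each time taking $M$ to be the rightmost surviving factor $U_i$ and $G$ the product of everything to its left (an integral matrix of determinant $\pm2\neq0$, so the hypotheses of Lemma~\ref{lem:A} are met) — peels the $U_i$ off one at a time and gives $\int_{A\Gamma_\pi}f\,d\vt = \int_{W D_d\Gamma_\pi}f\,d\vt$, where $W:=V_1\cdots V_{n_0}$ is a finite product of elements of $\mathfrak{G}$.

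The key auxiliary fact is: for every finite product $W$ of elements of $\mathfrak{G}$ and every $\vec{\gamma}\in\R^d$ one has $\int_{W\Gamma_\pi+\vec{\gamma}}f\,d\vt = \int_{\Gamma_\pi}f\,d\vt$. I would prove this by induction on the number of factors of $W$. The base case $W=I$ is precisely Lemma~\ref{lem:sesami}. For the inductive step, write $W=W'M$ with $M\in\mathfrak{G}$ and use Lemma~\ref{lem:A}(1) to write $M\Gamma_\pi = \bigdotcup_{j}(\Delta_j+\pi\vec{n}_j)$, where $\{\Delta_j\}$ is a partition of $\Gamma_\pi$ and $\vec{n}_j\in(2\Z)^d$; then $W\Gamma_\pi+\vec{\gamma} = \bigdotcup_j(W'\Delta_j+\pi W'\vec{n}_j+\vec{\gamma})$. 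Since $W'$ is integral, $\pi W'\vec{n}_j\in\pi(2\Z)^d$ is a period of $f$, so the integral over $W'\Delta_j+\pi W'\vec{n}_j+\vec{\gamma}$ equals that over $W'\Delta_j+\vec{\gamma}$; summing over $j$ and using that $\{\Delta_j\}$ partitions $\Gamma_\pi$ gives $\int_{W'\Gamma_\pi+\vec{\gamma}}f\,d\vt$, to which the inductive hypothesis applies. (Equivalently, the proof of Lemma~\ref{lem:A}(2) goes through verbatim with an arbitrary translation $\vec{\gamma}$ inserted throughout, and one iterates that together with Lemma~\ref{lem:sesami}.)

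Finally, $D_d=I+\Delta_{dd}$ multiplies the $d$-th coordinate by $2$ and fixes the others, so $D_d\Gamma_\pi = [-\pi,\pi)^{d-1}\times[-2\pi,2\pi) = (\Gamma_\pi-\pi\vec{e}_d)\,\dotcup\,(\Gamma_\pi+\pi\vec{e}_d)$. As $W$ is linear and invertible it preserves this disjoint union, so $W D_d\Gamma_\pi = (W\Gamma_\pi-\pi W\vec{e}_d)\,\dotcup\,(W\Gamma_\pi+\pi W\vec{e}_d)$, and applying the auxiliary fact with $\vec{\gamma}=\mp\pi W\vec{e}_d$ yields $\int_{W D_d\Gamma_\pi}f\,d\vt = 2\int_{\Gamma_\pi}f\,d\vt$. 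Combined with the first reduction this gives $\int_{A\Gamma_\pi}f\,d\vt = 2\int_{\Gamma_\pi}f\,d\vt$. I expect no genuine obstacle here; the only place demanding care is the bookkeeping in the induction — ensuring the pieces $W'\Delta_j$ stay honestly disjoint and reassemble to $W'\Gamma_\pi$ — which is routine once Lemma~\ref{lem:A} is in hand.
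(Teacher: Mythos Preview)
Your proof is correct and follows the same overall strategy as the paper: factor $A$ via Proposition~\ref{prop:see02}, peel off the $U_i$ using Lemma~\ref{lem:A}\eqref{eq:B}, and handle $D_d$ explicitly. The only difference is in the decomposition of $D_d\Gamma_\pi$: the paper splits it as $\Gamma_\pi \dotcup (\Gamma_\pi^+ - 2\pi\vec{e}_d) \dotcup (\Gamma_\pi^- + 2\pi\vec{e}_d)$ with $\Gamma_\pi^\pm = [-\pi,\pi]^{d-1}\times[\,0,\pm\pi\,]$, so the shifts are full periods and Lemma~\ref{lem:A} applies directly without any translated version; you instead split it as $(\Gamma_\pi-\pi\vec{e}_d)\dotcup(\Gamma_\pi+\pi\vec{e}_d)$ with half-period shifts, which forces you to prove the auxiliary translated identity by induction together with Lemma~\ref{lem:sesami}. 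Both work; the paper's decomposition is slightly more economical since it avoids that extra induction.
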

\begin{proof}
By Proposition \ref{prop:see02} Equation \eqref{eq:prop3.1eq3} we have $V_j, U_i \in \mathfrak{G}$ such that
\begin{equation*}
   A = V_1 V_2 \cdots V_{n_0} D_d U_1 U_2 \cdots U_{m_0}.
\end{equation*}
Repeatedly using Lemma \ref{lem:A} Equation \eqref{eq:B} we have
\begin{eqnarray*}
\int_{A \Gamma_\pi} f d \vt  &=& \int_{V_1 V_2 \cdots V_{n_0} D_d U_1 U_2 \cdots U_{m_0} \Gamma_\pi} f d \vt \\
&=& \int_{V_1 V_2 \cdots V_{n_0} D_d \Gamma_\pi} f d \vt  .
\end{eqnarray*}
Denote $\Gamma_\pi ^+ \equiv [-\pi,\pi]^{d-1} \times [0,\pi], \ \Gamma_\pi ^- \equiv [-\pi,\pi]^{d-1} \times [-\pi,0].   $
Modulus a measure zero set we have
$
 D_d \Gamma_\pi = [-\pi,\pi]^{d-1} \times [-2\pi,2\pi]
 =\Gamma_\pi \dotcup \big( (\Gamma_\pi ^+ -2 \pi \vec{e}_d ) \dotcup ( \Gamma_\pi ^- + 2 \pi \vec{e}_d )\big)
$.
So we have
\begin{eqnarray*}
\int_{A \Gamma_\pi} f d \vt  &=&
\int_{V_1 V_2 \cdots V_{n_0} \big( \Gamma_\pi \dotcup \big( (\Gamma_\pi ^+ -2 \pi \vec{e}_d ) \dotcup ( \Gamma_\pi ^- + 2 \pi \vec{e}_d )\big) \big)} f d \vt\\
&=&
\int_{V_1 V_2 \cdots V_{n_0} \Gamma_\pi } f d \vt
+ \int_{V_1 V_2 \cdots V_{n_0} \big( (\Gamma_\pi ^+ -2 \pi \vec{e}_d ) \dotcup ( \Gamma_\pi ^- + 2 \pi \vec{e}_d ) \big)} f d \vt.
\end{eqnarray*}
By Lemma \ref{lem:A}  we have
\begin{eqnarray*}
\int_{V_1 V_2 \cdots V_{n_0} \Gamma_\pi } f d \vt &=& \int_{\Gamma_\pi } f d \vt,
\end{eqnarray*}
and
\begin{eqnarray*}
&& \int_{V_1 V_2 \cdots V_{n_0} \big( (\Gamma_\pi ^+ -2 \pi \vec{e}_d ) \dotcup ( \Gamma_\pi ^- + 2 \pi \vec{e}_d ) \big)} f d \vt \\
&=&
\int_{V_1 V_2 \cdots V_{n_0} ( \Gamma_\pi ^- + 2 \pi \vec{e}_d )} f d \vt +
\int_{V_1 V_2 \cdots V_{n_0}  (\Gamma_\pi ^+ -2 \pi \vec{e}_d )} f d \vt\\
&=&
\int_{V_1 V_2 \cdots V_{n_0} (\Gamma_\pi ^+ \dotcup \Gamma_\pi ^-  )} f d \vt \\
&=& \int_{V_1 V_2 \cdots V_{n_0} \Gamma_\pi} f d \vt \\
&=& \int_{\Gamma_\pi} f d \vt .
\end{eqnarray*}

The proof of Lemma \ref{lem:C} is complete.
\end{proof}

\begin{proposition}
\label{prop:l2phi}
The functions $g$ and $\varphi$ are in $L^2(\R^d).$
\end{proposition}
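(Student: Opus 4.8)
The plan is to show that the partial products $g_k(\vxi) \equiv (2\pi)^{-d/2}\prod_{j=1}^k m_0((A^\tau)^{-j}\vxi)$ are uniformly bounded in $L^2(A^{\tau k}\Gamma_\pi)$, and then pass to the limit. The guiding identity is the filter relation $|m_0(\vt)|^2 + |m_0(\vt+\pi\vec{q}_A)|^2 = 1$ from Proposition \ref{prop:filter}, together with the $2\pi$-periodicity of $m_0$ and $\vec{q}_A\circ A\Z^d \subseteq 2\Z$, $\vec{q}_A\circ(\vec{\ell}_A + A\Z^d)\subseteq 2\Z+1$ from the Partition Theorem. First I would set $I_k \equiv \int_{A^{\tau k}\Gamma_\pi} |g_k(\vxi)|^2\, d\vxi$ and establish the recursion $I_{k} = I_{k-1}$ for all $k$, which forces $I_k = I_1 = \int_{A^\tau \Gamma_\pi} |g_1(\vxi)|^2 d\vxi = (2\pi)^{-d}\int_{A^\tau\Gamma_\pi}|m_0((A^\tau)^{-1}\vxi)|^2 d\vxi$; by the change of variables $\vxi = A^\tau\vec{u}$ and Lemma \ref{lem:C} (applied to $A^\tau$, which is also expansive of determinant $\pm 2$) this equals $(2\pi)^{-d}\cdot 2\cdot\int_{\Gamma_\pi}|m_0(\vec{u})|^2 d\vec{u}$, a finite constant since $m_0$ is a trigonometric polynomial.

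The heart of the argument is the recursion $I_k = I_{k-1}$. To get it, substitute $\vxi = A^\tau\vec{u}$ in $I_k$, so $d\vxi = 2\, d\vec{u}$ and $I_k = 2\,(2\pi)^{-d}\int_{A^{\tau(k-1)}\Gamma_\pi}\bigl|m_0(\vec{u})\bigr|^2\prod_{j=1}^{k-1}\bigl|m_0((A^\tau)^{-j}\vec{u})\bigr|^2 d\vec{u} = 2\int_{A^{\tau(k-1)}\Gamma_\pi}|m_0(\vec{u})|^2 |g_{k-1}(\vec{u})|^2 d\vec{u}$. Now I would split the domain $A^{\tau(k-1)}\Gamma_\pi$ into two congruent halves using Property (2)/(4) of the Partition Theorem: since $A^{\tau(k-1)}\Z^d = A^{k-1}\Z^d$ (by Property (1) iterated) has index $2$ in $\Z^d$ with coset representative $\vec{\ell}$ (here one should take $\vec{\ell}$ with $\vec{q}_A\circ\vec{\ell}$ odd), the region tiles as $A^{\tau(k-1)}\Gamma_\pi$ and its translate by $\pi\vec{q}_A$ covering a fundamental domain twice, while $g_{k-1}$ is $2\pi A^{k-1}\Z^d$-periodic (each factor $m_0((A^\tau)^{-j}\vec{u})$ is invariant under $\vec{u}\mapsto \vec{u}+2\pi A^{\tau(j)}\vec{m}$, and for $j\le k-1$ this includes the needed periods). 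Pairing $\vec{u}$ with $\vec{u}+\pi\vec{q}_A$ and using $|m_0(\vec{u})|^2+|m_0(\vec{u}+\pi\vec{q}_A)|^2=1$ collapses the integrand's $|m_0|^2$ factor, yielding $I_k = I_{k-1}$. This is essentially the standard Mallat–Daubechies telescoping, but the bookkeeping of which lattice translations preserve $g_{k-1}$ and how the two half-domains fit together is the delicate part — I expect that to be the main obstacle, and it is exactly where Lemmas \ref{lem:sesami} and \ref{lem:A} and the Partition Theorem's parity statements get used.

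Once $\sup_k I_k < \infty$ is established, I would conclude as follows. The functions $g_k$ converge pointwise on $\R^d$ to $g$ (the infinite product converges since $|m_0|\le 1$ near $\vec{0}$ by Corollary \ref{less1} and $m_0(\vec{0})=1$ with $m_0$ smooth, so $\prod_j m_0((A^\tau)^{-j}\vxi)$ converges locally uniformly — this should be cited from or proved as in Proposition \ref{prop:entire}'s circle of ideas, or handled directly). For any fixed $R>0$, the ball $B_R$ is contained in $A^{\tau k}\Gamma_\pi$ for $k$ large (as $A^\tau$ is expansive), so $\int_{B_R}|g_k|^2 \le I_k \le I_1$; letting $k\to\infty$ and using Fatou's lemma gives $\int_{B_R}|g|^2 \le I_1$, and then $R\to\infty$ yields $\|g\|_{L^2(\R^d)}^2 \le I_1 < \infty$. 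Hence $g\in L^2(\R^d)$, and since $\F$ is unitary on $L^2(\R^d)$, $\varphi = \F^{-1}g \in L^2(\R^d)$ as well, completing the proof.
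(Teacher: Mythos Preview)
Your overall strategy---bound the partial-product integrals uniformly and apply Fatou---is the same as the paper's, and your endgame (pointwise convergence of $g_k$ on expanding sets, Fatou, then $\varphi=\F^{-1}g$) is fine. The gap is in your recursion step. After your one-step substitution you have
\[
I_k \;=\; 2\int_{(A^\tau)^{k-1}\Gamma_\pi}\bigl|m_0(\vec{u})\bigr|^2\,\bigl|g_{k-1}(\vec{u})\bigr|^2\,d\vec{u},
\]
and you propose to pair $\vec{u}$ with $\vec{u}+\pi\vec{q}_A$ so that $|m_0|^2+|m_0(\cdot+\pi\vec{q}_A)|^2=1$ collapses the extra factor. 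For this you need $g_{k-1}(\vec{u}+\pi\vec{q}_A)=g_{k-1}(\vec{u})$. But the period lattice of $g_{k-1}$ is $2\pi(A^\tau)^{k-1}\Z^d$, and $\pi\vec{q}_A\notin 2\pi(A^\tau)^{k-1}\Z^d$ (indeed $\vec{q}_A\notin 2\Z^d$ since $\vec{q}_A\circ\vec{\ell}_A$ is odd). So the shift by $\pi\vec{q}_A$ does \emph{not} leave $g_{k-1}$ invariant, and the pairing fails as stated.

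The paper avoids this by substituting all the way down: with $\vec{\eta}=(A^\tau)^{-J}\vxi$ one obtains
\[
\int_{A^\tau\Gamma_\pi}\ \prod_{m=0}^{J-1}\bigl|m_0\bigl((A^\tau)^{m}\vec{\eta}\bigr)\bigr|^2\,d\vec{\eta},
\]
now with \emph{positive} powers of $A^\tau$. This integrand is $2\pi\Z^d$-periodic, so Lemma~\ref{lem:C} gives a factor of $2$ and Lemma~\ref{lem:sesami} allows the translation $\vec{\eta}\mapsto\vec{\eta}+\pi\vec{q}_A$ of $\Gamma_\pi$. Crucially, for $m\ge 1$ one has $(A^\tau)^m\vec{q}_A\in(2\Z)^d$ (since $\vec{q}_A\circ A\Z^d\subseteq 2\Z$ forces $A^\tau\vec{q}_A\in(2\Z)^d$), so only the $m=0$ factor is affected by the shift, and the filter identity collapses it. Your argument can be repaired by making this full substitution instead of the single step; then everything you wrote after ``Once $\sup_k I_k<\infty$'' goes through.
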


\begin{proof}
For $J\in\N,$ we define
\begin{equation*}M_J(\vx)\equiv \left\{
\begin{array}{ll}
\prod_{j=1}^{J}|m_0((A^\tau)^{-j}\vx)|^2, & \text{if $\vx\in(A^\tau)^{J+1}\Gamma_\pi$;}\\
0, & \text{if $\vx \in \R^d \backslash (A^\tau)^{J+1}\Gamma_\pi$}.
\end{array}\right.
\end{equation*}
To prove the Proposition \ref{prop:l2phi},  by Fatou's Lemma it suffices to show  that $\{ \int_{\R^d}M_J(\vx)d\vx, J\in\N\}$ is a bounded sequence.

We have
\begin{eqnarray*}
\int_{\R^d}M_J(\vx)d\vx
&=& \int_{(A^\tau)^{J+1}\Gamma_\pi}\prod_{k=1}^{J}|m_0((A^\tau)^{-k}\vx)|^2 d\vx\\
&=& \int_{(A^\tau)^{J}(A^\tau \Gamma_\pi)} \prod_{k=1}^{J}|m_0((A^\tau)^{-k}\vx)|^2 d\vx\\
&=& |\det((A^\tau)^{J})| \int_{A^\tau\Gamma_\pi } \prod_{m=0}^{J-1}|m_0((A^\tau)^{m}\eta)|^2  d\vec{\eta}
\end{eqnarray*}
where  $\vec{\eta}\equiv (A^\tau)^{-J} \vx.$

We claim that
\begin{equation}
  \int_{A^\tau\Gamma_\pi } \prod_{m=0}^{J-1}|m_0((A^\tau)^{m}\eta)|^2  d\vec{\eta}
  =
  \int_{\Gamma_\pi } \prod_{m=1}^{J-1}|m_0((A^\tau)^{m}\eta)|^2  d\vec{\eta}.
\end{equation}
By this claim and the calculation before we will have
\begin{eqnarray*}
  \int_{\R^d}M_J(\vx)d\vx
  &=& |\det((A^\tau)^{J})| \int_{A^\tau\Gamma_\pi } \prod_{m=0}^{J-1}|m_0((A^\tau)^{m}\eta)|^2  d\vec{\eta}\\
  &=& |\det((A^\tau)^{J})| \int_{\Gamma_\pi } \prod_{m=1}^{J-1}|m_0((A^\tau)^{m}\eta)|^2  d\vec{\eta}\\
  &=&  \int_{(A^\tau)^J\Gamma_\pi } \prod_{m=1}^{J-1}|m_0((A^\tau)^{-(J-m)}\vx)|^2  d\vx\\
  &=&   \int_{\R^d}M_{J-1}(\vx)d\vx.
\end{eqnarray*}
So, $\{ \int_{\R^d}M_J(\vx)d\vx, J\in\N\}$ is a constant sequence. We will complete the proof when we finish the proof of the claim.

Proof of the Claim.
By Lemma \ref{lem:sesami}, let $\vec{\gamma}=-\pi \vec{q}_A$, we have
\begin{eqnarray*}
  &&
  \int_{\Gamma_\pi } \prod_{m=0}^{J-1}|m_0((A^\tau)^{m}\eta)|^2  d\vec{\eta}\\
   &=&
  \int_{\Gamma_\pi- \pi \vec{q}_A } \prod_{m=0}^{J-1}|m_0((A^\tau)^{m}\eta)|^2  d\vec{\eta}\\
   &=&
   \int_{\Gamma_\pi} | m_0 (\vec{\lambda}+\pi \vec{q}_A)|^2 \cdot \prod_{m=1}^{J-1}|m_0((A^\tau)^{m}\vec{\lambda} + \pi(A^\tau)^m \vec{q}_A)|^2  d\vec{\lambda}\\
\end{eqnarray*}
where $\vec{\lambda} =  \vec{\eta}-\pi \vec{q}_A.$

%
%
By Partition Theorem equation \eqref{eq:(3)}, $\vec{q}_A \circ A \Z^d $ are even numbers.
So $A^\tau \vec{q}_A \circ \vec{n} $ is even for every $\vec{n} \in\Z^d.$ This implies
that $A^\tau \vec{q}_A \in (2\Z)^d.$ So $(A^\tau)^m\vec{q}_A \in (2\Z)^d$ for $m\geq 1 .$
By the fact that the function $m_0$ is $2\pi$-periodical, we have
\begin{equation}\label{eq:z-2}
 \int_{\Gamma_\pi } \prod_{m=0}^{J-1}|m_0((A^\tau)^{m}\eta)|^2  d\vec{\eta}
   =
   \int_{\Gamma_\pi} |m_0 (\vec{\lambda}+\pi \vec{q}_A)|^2 \cdot \prod_{m=1}^{J-1}|m_0((A^\tau)^{m}\vec{\lambda})|^2  d\vec{\lambda}\\
\end{equation}

Now, by Lemma \ref{lem:C} we have
\begin{equation}
\label{eq:z-1}
    \int_{A^\tau\Gamma_\pi } \prod_{m=0}^{J-1}|m_0((A^\tau)^{m}\eta)|^2  d\vec{\eta}
    =
    2\cdot \int_{\Gamma_\pi } \prod_{m=0}^{J-1}|m_0((A^\tau)^{m}\eta)|^2  d\vec{\eta}.
\end{equation}

Combine \eqref{eq:z-1} and \eqref{eq:z-2}, and by \eqref{eq:m0} we have
\begin{eqnarray*}
&&  \int_{A^\tau\Gamma_\pi } \prod_{m=0}^{J-1}|m_0((A^\tau)^{m}\eta)|^2  d\vec{\eta}\\
&=&
\int_{\Gamma_\pi } \prod_{m=0}^{J-1}|m_0((A^\tau)^{m}\eta)|^2  d\vec{\eta} +
\int_{\Gamma_\pi } \prod_{m=0}^{J-1}|m_0((A^\tau)^{m}\eta)|^2  d\vec{\eta} \\
&=&
\int_{\Gamma_\pi} |m_0 (\vec{\lambda})|^2 \cdot \prod_{m=1}^{J-1}|m_0((A^\tau)^{m}\vec{\lambda})|^2  d\vec{\lambda}
+
\int_{\Gamma_\pi} |m_0 (\vec{\lambda}+\pi \vec{q}_A)|^2 \cdot \prod_{m=1}^{J-1}|m_0((A^\tau)^{m}\vec{\lambda})|^2 d\vec{\lambda}\\
&=&
\int_{\Gamma_\pi} (|m_0 (\vec{\lambda})|^2+ |m_0 (\vec{\lambda}+\pi \vec{q}_A)|^2 )\cdot \prod_{m=1}^{J-1}|m_0((A^\tau)^{m}\vec{\lambda})|^2  d\vec{\lambda}\\
&=&
\int_{\Gamma_\pi} \prod_{m=1}^{J-1}|m_0((A^\tau)^{m}\vec{\lambda})|^2  d\vec{\lambda}.
\end{eqnarray*}
The claim has been proven.

\end{proof}

\section{$\varphi$ has a compact support}

\bigskip

In this section we will prove that the scaling function $\varphi$
has a compact support in $\R^d$ (Proposition \ref{prop:compactsupport}).
We outline the ideas for this. We place the proofs of Lemma \ref{lem:gconverge},
Lemma \ref{lem:ineq} and Proposition \ref{prop:compactsupport} in the Appendix since
in the proofs we use the similar ideas we used in the previous paper \cite{dai2d}

We will need the following Schwartz's Paley-Wiener Theorem.\\

\textsc{Schwartz's Paley-Wiener Theorem}
\textit{An entire function $F$ on $\C^d,d\in\N,$ is the Fourier Transform of a distribution with compact support in $\R^d$ if and only if
there are some constants $C,N$ and $B$, such that
\begin{equation}
\label{eq:ineqcpt}
|F(\vx)| \leq C(1+|\vx|)^N e^{B |\mathfrak{Im} (\vx)|}, ~ \forall \vx \in\C^d
\end{equation}
The distribution is supported on the closed ball with center $\vec{0}$ and radius $B$.
}\\

First, we prove that $g$ is an entire function.
Denote $d_j(\vx) \equiv m_0((A^\tau)^{-j}\vx)-1$ and
denote $\beta=\|(A^\tau)^{-1}\|^{-1}.$ Since $A$ is expansive, $\beta>1$. Here $\|\cdot \|$ is the operator norm of linear operators on the Euclidian space $\C^d.$
\begin{lemma}
\label{lem:gconverge}
Let  $\Omega$ be a bounded closed region in $\C^d.$
Then there exists a constant $C_\Omega>0$,
\begin{equation}
|d_j (\vx)| \leq \frac{C_\Omega}{\beta^j},\ \forall j\in\N,\vx\in \Omega.
\end{equation}
\end{lemma}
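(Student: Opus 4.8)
The plan is to exploit that $m_0$ extends to an entire function on $\C^d$ which equals $1$ at the origin, so that $m_0-1$ vanishes there and is therefore Lipschitz near $\vec 0$; expansiveness will then guarantee that all the contracted arguments $(A^\tau)^{-j}\vx$ stay inside one fixed ball, on which a single Lipschitz constant suffices uniformly in $j$.

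Concretely, I would first fix $\Omega$ and set $R\equiv\sup_{\vx\in\Omega}|\vx|$, which is finite since $\Omega$ is bounded. From \eqref{eq:m0}, $m_0(\vec z)=\frac{1}{\sqrt2}\sum_{\vec n\in\Lambda_0}h_{\vec n}e^{-i\vec n\circ\vec z}$ is a finite sum of exponentials, hence entire on $\C^d$, and $m_0(\vec 0)=1$. Since $\|\cdot\|$ is the operator norm it is submultiplicative, so $\|(A^\tau)^{-j}\|\le\|(A^\tau)^{-1}\|^{\,j}=\beta^{-j}$; because $A$ is expansive we have $\beta>1$, hence $\beta^{-j}\le 1$, and therefore $|(A^\tau)^{-j}\vx|\le\beta^{-j}|\vx|\le R$ for every $j\in\N$ and every $\vx\in\Omega$. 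Thus all the points $(A^\tau)^{-j}\vx$ lie in the fixed compact ball $\overline{B}_R\equiv\{\vec z\in\C^d:|\vec z|\le R\}$.

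Next, on $\overline{B}_R$ the entire function $m_0$ is $C^1$ (as a function of the $2d$ real coordinates), so $L\equiv\sup_{\vec z\in\overline{B}_R}\|\nabla m_0(\vec z)\|<\infty$. Integrating $\tfrac{d}{dt}m_0(t\vec z)$ over $t\in[0,1]$ along the segment joining $\vec 0$ to $\vec z$ (which stays in the convex set $\overline{B}_R$) yields the uniform bound $|m_0(\vec z)-1|=|m_0(\vec z)-m_0(\vec 0)|\le L|\vec z|$ for all $\vec z\in\overline{B}_R$.

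Finally I would combine the two estimates: for $\vx\in\Omega$ and $j\in\N$,
\[
|d_j(\vx)|=|m_0((A^\tau)^{-j}\vx)-1|\le L\,|(A^\tau)^{-j}\vx|\le L\,\|(A^\tau)^{-1}\|^{\,j}\,|\vx|\le LR\,\beta^{-j},
\]
so $C_\Omega\equiv LR$ does the job. I do not expect a genuine obstacle in this argument; the only point requiring a little care is keeping the Lipschitz constant independent of $j$, which is exactly why one must first observe (using $\beta>1$) that the contracted arguments never leave the fixed ball $\overline{B}_R$, together with the fact that the relevant $\|\cdot\|$ is the operator norm, so that the bound $\|(A^\tau)^{-j}\|\le\|(A^\tau)^{-1}\|^{\,j}$ is legitimate.
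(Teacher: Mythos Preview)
Your argument is correct and follows the same underlying idea as the paper: use that $m_0-1$ is entire and vanishes at $\vec 0$, confine all contracted arguments $(A^\tau)^{-j}\vx$ to a single compact set via $\beta>1$, and extract a Lipschitz-type bound there. The only cosmetic difference is that the paper makes the Lipschitz factor explicit by writing $e^{-iz}-1=z\,v(z)$ with $v(z)=(e^z-1)/z$ entire and bounding $|v|$ on the relevant compact set, whereas you invoke a gradient bound for $m_0$ on $\overline{B}_R$; both produce the same $C_\Omega\beta^{-j}$ estimate.
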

(For the proof, see Appendix \textsc{Proof of Lemma \ref{lem:gconverge}}.)\\

\begin{proposition}
\label{prop:entire}
The function $g(\vx)$  is an entire function on $\C^d$.
\end{proposition}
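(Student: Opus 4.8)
The plan is to exhibit $g$ as a locally uniform limit of entire functions. For $N\in\N$ set
$g_N(\vx)\equiv \frac{1}{(2\pi)^{d/2}}\prod_{j=1}^{N}m_0((A^\tau)^{-j}\vx)$. By \eqref{eq:m0} each $m_0$ is a finite linear combination of exponentials, so $\vx\mapsto m_0((A^\tau)^{-j}\vx)$ is entire on $\C^d$ and hence so is each partial product $g_N$. It then suffices to show that $g_N\to g$ uniformly on every bounded closed region $\Omega\subset\C^d$, because a locally uniform limit of entire functions on $\C^d$ is again entire (each $g_N$ is holomorphic in each variable separately, this passes to uniform limits by the one‑variable Weierstrass theorem, and separate holomorphy together with continuity gives joint holomorphy). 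This also shows along the way that the infinite product defining $g$ converges at every point of $\C^d$, so $g$ is well defined there.

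First I would fix a bounded closed region $\Omega\subset\C^d$ and apply Lemma \ref{lem:gconverge} to obtain $C_\Omega>0$ with $|d_j(\vx)|\le C_\Omega\beta^{-j}$ for all $j\in\N$ and $\vx\in\Omega$, where $\beta>1$ since $A$ is expansive. In particular $\sum_{j\ge 1}\sup_{\vx\in\Omega}|d_j(\vx)|$ is dominated by the convergent geometric series $C_\Omega\sum_{j\ge 1}\beta^{-j}=\frac{C_\Omega}{\beta-1}$. Next I would choose $j_0\in\N$ with $C_\Omega\beta^{-j}<\frac12$ for all $j>j_0$, so that the factors $m_0((A^\tau)^{-j}\vx)=1+d_j(\vx)$ with $j>j_0$ stay bounded away from $0$ on $\Omega$; the standard criterion for absolute and uniform convergence of infinite products (applicable precisely because $\sum_{j>j_0}\sup_\Omega|d_j|<\infty$) then shows that $\prod_{j>j_0}\bigl(1+d_j(\vx)\bigr)$ converges uniformly on $\Omega$ to a bounded function. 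Multiplying by the fixed entire head $\prod_{j=1}^{j_0}m_0((A^\tau)^{-j}\vx)$ yields uniform convergence $g_N\to g$ on $\Omega$, and letting $\Omega$ exhaust $\C^d$ completes the proof.

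The only delicate point is the uniform convergence of the infinite product, and this is already in hand once Lemma \ref{lem:gconverge} is available: the geometric decay $|d_j(\vx)|\le C_\Omega\beta^{-j}$ is exactly what drives both the convergence of the product and the local uniformity of the estimate, so I do not expect a genuine obstacle here. It is also worth remarking that the restriction of this entire $g$ to $\R^d$ coincides with the function $g$ of Definition \ref{def:g}, so Proposition \ref{prop:entire} indeed provides the entire extension of the $L^2(\R^d)$‑function $g$ of Proposition \ref{prop:l2phi} required in step (C$_2$).
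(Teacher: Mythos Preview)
Your proof is correct and follows essentially the same approach as the paper: define the partial products $g_N$, observe they are entire, invoke Lemma~\ref{lem:gconverge} to get the geometric bound $|d_j|\le C_\Omega\beta^{-j}$ on each bounded $\Omega$, deduce uniform convergence of the infinite product, and conclude that $g$ is entire as a locally uniform limit of entire functions. The only cosmetic differences are that you split off a finite head to keep the tail factors away from zero and appeal to separate holomorphy plus Weierstrass, whereas the paper simply cites the standard product criterion and Morera's theorem directly.
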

\begin{proof}
For $J\in\N, \ \vx\in\C^d,$ define
\begin{equation}
g_J (\vx) \equiv \frac{1}{(2\pi)^{d/2}}\prod_{j=1}^{J}m_0((A^\tau)^{-j}\vx)= \frac{1}{(2\pi)^{d/2}} \prod_{j=1}^{J}(1+d_j(\vx)).
\end{equation}
Since this is a finite product, $g_J$ is an entire function.

Since $\beta>1,$ by Lemma \ref{lem:gconverge} $\sum |d_j (\vx)|$ converges uniformly on bounded region $\Omega,$
the product $\prod_{j=0}^\infty (1+|d_j(\vx)|)$ converges uniformly on $\Omega.$
This implies that   $g$ is the uniform limit of a sequence of entire functions $g_J$ on every bounded region $\Omega$. By Morera Theorem
$g$ is an entire function on $\C^d.$

\end{proof}

\begin{lemma}
\begin{equation}
\label{eq:E} |e^{-iz}-1|\leq \min(2,|z|), \forall z\in \C,\mathfrak{Im} (z)\leq 0.
\end{equation}
\end{lemma}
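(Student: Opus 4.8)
The plan is to prove the two estimates $|e^{-iz}-1|\le 2$ and $|e^{-iz}-1|\le|z|$ separately on the closed lower half-plane $\{z\in\C:\mathfrak{Im}(z)\le 0\}$, and then combine them. The single fact driving both halves is that $|e^{-iz}|=e^{\mathfrak{Im}(z)}\le 1$ whenever $\mathfrak{Im}(z)\le 0$: writing $z=x+iy$ with $y\le 0$ gives $-iz=y-ix$, hence $|e^{-iz}|=e^{y}\le 1$.

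First I would dispose of the bound by $2$, which is immediate from the triangle inequality: $|e^{-iz}-1|\le|e^{-iz}|+1\le 1+1=2$. For the bound by $|z|$, I would integrate the derivative of the entire function $t\mapsto e^{-itz}$ over $[0,1]$ to obtain $e^{-iz}-1=-iz\int_0^1 e^{-itz}\,dt$; taking moduli and using $|e^{-itz}|=e^{ty}\le 1$ for $t\in[0,1]$ (again because $y\le 0$) yields $|e^{-iz}-1|\le|z|\int_0^1 e^{ty}\,dt\le|z|$. Taking the minimum of the two bounds finishes the proof.

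There is essentially no obstacle here: the only point requiring care is to invoke the hypothesis $\mathfrak{Im}(z)\le 0$ in both places — once to bound $|e^{-iz}|$ by $1$ for the triangle-inequality estimate, and once to bound the integrand $|e^{-itz}|$ uniformly by $1$ for the integral estimate. (One could instead evaluate $\int_0^1 e^{ty}\,dt=(1-e^{y})/(-y)$ and note this is $\le 1$, but the crude bound $e^{ty}\le 1$ already suffices.)
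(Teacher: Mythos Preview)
Your proof is correct. The bound $|e^{-iz}-1|\le 2$ is handled identically to the paper (triangle inequality together with $|e^{-iz}|=e^{\mathfrak{Im}(z)}\le 1$). For the bound $|e^{-iz}-1|\le|z|$, however, you take a genuinely different route: you write $e^{-iz}-1=-iz\int_0^1 e^{-itz}\,dt$ and bound the integrand by $1$. The paper instead computes $|e^{-iz}-1|^2$ explicitly as $(e^b-1)^2+4e^b\sin^2(a/2)$ (with $z=a+ib$) and bounds the two pieces separately using $(1-e^b)^2\le b^2$ (from $e^b\ge 1+b$) and $4\sin^2(a/2)\le a^2$, $e^b\le 1$. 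Your integral argument is shorter and avoids the trigonometric decomposition; the paper's computation is more hands-on but makes the role of each coordinate visible. Either way the hypothesis $\mathfrak{Im}(z)\le 0$ enters exactly where you flag it.
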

\begin{proof}
Let $z=a+ib,$ with $b=\mathfrak{Im} (z)\leq 0.$
So we have
\begin{equation*}
 |e^{-iz}-1| \leq 1+ |e^{-iz}| \leq 1 + e^b \leq 2.
\end{equation*}
On the other hand, we have $e^b > 1+b, \ \forall b\neq 0.$ When $b < 0,$ $b^2=(-b)^2 >(1-e^b)^2.$ So
\begin{equation*}
  |e^{-iz}-1|^2 = e^{2b} -2e^b \cos a +1 =
(e^b-1)^2 +e^b\cdot 4\sin^2 \frac{a}{2} \leq b^2+a^2 = |z|.
\end{equation*}
\end{proof}

\begin{lemma}
\label{lem:ineq}
There exist constants $B_0, C_0$ such that for all $ j\in\N, \ \vx \in \C^d$,
\begin{equation}
   | m_0 \big( (A^\tau)^{-j} \vx \big)| \leq
   \big( 1+  C_0
\min (1, \frac{| \vx|}{\beta^j})\big) \exp( \frac{B_0|\mathfrak{Im}(\vx)|}{\beta^j} )
.
\end{equation}
\end{lemma}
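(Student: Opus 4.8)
The plan is to expand $m_0$ about the origin. Writing $\vec{w}\equiv(A^\tau)^{-j}\vx$ and using $m_0(\vec{0})=1$ together with $\sum_{\vec{n}\in\Lambda_0}h_{\vec{n}}=\sqrt2$, one has
\[
m_0(\vec{w})=1+\frac{1}{\sqrt2}\sum_{\vec{n}\in\Lambda_0}h_{\vec{n}}\bigl(e^{-i\,\vec{n}\circ\vec{w}}-1\bigr),
\]
so that $|m_0(\vec{w})|\le 1+\frac{1}{\sqrt2}\sum_{\vec{n}\in\Lambda_0}|h_{\vec{n}}|\,\bigl|e^{-i\,\vec{n}\circ\vec{w}}-1\bigr|$. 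It then suffices to bound each term $\bigl|e^{-i\,\vec{n}\circ\vec{w}}-1\bigr|$ by a product of a factor comparable to $\min(1,|\vx|/\beta^{j})$ and an exponential factor in $|\mathfrak{Im}(\vx)|/\beta^{j}$, and to absorb the finitely many constants coming from the index set $\Lambda_0$.

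First I would upgrade \eqref{eq:E} to the two-sided estimate $|e^{-iz}-1|\le e^{|\mathfrak{Im}(z)|}\min(2,|z|)$, valid for every $z\in\C$: when $\mathfrak{Im}(z)>0$, factor $e^{-iz}-1=e^{-iz}\bigl(1-e^{iz}\bigr)$, note $|e^{-iz}|=e^{\mathfrak{Im}(z)}$, and apply \eqref{eq:E} to $-z$ (which has nonpositive imaginary part) to get $|1-e^{iz}|=\bigl|e^{-i(-z)}-1\bigr|\le\min(2,|z|)$. This two-sided form is what is needed here, because the quantity $\vec{n}\circ\vec{w}$ carries a conjugation and so has imaginary part of no definite sign. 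Taking $z=\vec{n}\circ\vec{w}$, I would then note (using that $\vec{n}$ is a real vector together with Cauchy--Schwarz) that $|z|\le|\vec{n}|\,|\vec{w}|$ and $|\mathfrak{Im}(z)|\le|\vec{n}|\,|\mathfrak{Im}(\vec{w})|$; since $\vec{n}\in\Lambda_0$ forces $|\vec{n}|\le\sqrt{d}\,N_0$, and since $(A^\tau)^{-j}$ is a real matrix whose operator norm is at most $\|(A^\tau)^{-1}\|^{j}=\beta^{-j}$, one gets $|\vec{w}|\le\beta^{-j}|\vx|$ and $|\mathfrak{Im}(\vec{w})|=\bigl|(A^\tau)^{-j}\mathfrak{Im}(\vx)\bigr|\le\beta^{-j}|\mathfrak{Im}(\vx)|$.

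Combining these with the elementary inequality $\min(2,ts)\le\max(2,t)\,\min(1,s)$ (used with $t=\sqrt{d}\,N_0$ and $s=|\vx|/\beta^{j}$) yields
\[
\bigl|e^{-i\,\vec{n}\circ\vec{w}}-1\bigr|\le \max(2,\sqrt{d}\,N_0)\,\exp\!\Bigl(\tfrac{\sqrt{d}\,N_0\,|\mathfrak{Im}(\vx)|}{\beta^{j}}\Bigr)\,\min\!\Bigl(1,\tfrac{|\vx|}{\beta^{j}}\Bigr),
\]
and summing against $|h_{\vec{n}}|$ produces $|m_0(\vec{w})|\le 1+C_0\,e^{B_0|\mathfrak{Im}(\vx)|/\beta^{j}}\min(1,|\vx|/\beta^{j})$ with $B_0=\sqrt{d}\,N_0$ and $C_0=\tfrac{1}{\sqrt2}\max(2,\sqrt{d}\,N_0)\sum_{\vec{n}\in\Lambda_0}|h_{\vec{n}}|$. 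Since the exponential factor is at least $1$, it may be pulled out in front of the whole bracket, giving exactly $\bigl(1+C_0\min(1,|\vx|/\beta^{j})\bigr)\exp\bigl(B_0|\mathfrak{Im}(\vx)|/\beta^{j}\bigr)$, as claimed. The only mildly delicate points are the sign bookkeeping for the complex inner product, handled by the two-sided refinement of \eqref{eq:E}, and the rearrangement of the two nested $\min$'s; everything else is routine. As an alternative one could invoke Lemma \ref{lem:gconverge} to treat $\vx$ in a fixed bounded region and argue separately when $|\vx|\ge\beta^{j}$, but the single estimate above already covers all regimes.
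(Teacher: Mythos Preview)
Your argument is correct and reaches the stated bound, but it proceeds along a different line from the paper's proof. The paper does \emph{not} upgrade \eqref{eq:E} to a two--sided estimate. Instead, for each $\vx\in\C^d$ it introduces an auxiliary vector $\vec{\ell}_{\vx}\in\{-N_0,N_0\}^d$, with the $m$-th entry chosen according to the sign of $\mathfrak{Im}\bigl(((A^\tau)^{-j}\vx)_m\bigr)$, so that $\mathfrak{Im}\bigl((\vec{n}-\vec{\ell}_{\vx})\circ(A^\tau)^{-j}\vx\bigr)\le 0$ for every $\vec{n}\in\Lambda_0$. One then factors
\[
m_0\bigl((A^\tau)^{-j}\vx\bigr)=e^{-i\vec{\ell}_{\vx}\circ(A^\tau)^{-j}\vx}\Bigl(1+\tfrac{1}{\sqrt2}\sum_{\vec{n}\in\Lambda_0}h_{\vec{n}}\bigl(e^{-i(\vec{n}-\vec{\ell}_{\vx})\circ(A^\tau)^{-j}\vx}-1\bigr)\Bigr),
\]
applies the one--sided \eqref{eq:E} to each summand (now legitimate), and bounds the prefactor by $\exp(B_0|\mathfrak{Im}(\vx)|/\beta^j)$ with $B_0=2\sqrt{d}\,N_0$. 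Your approach trades this sign--fixing trick for the elementary observation $|e^{-iz}-1|\le e^{|\mathfrak{Im}(z)|}\min(2,|z|)$, which is cleaner and even yields the slightly smaller constant $B_0=\sqrt{d}\,N_0$; the paper's device, on the other hand, keeps the exponential factor as a genuine multiplicative prefactor from the start rather than pulling it out at the end. Either route is adequate for the Paley--Wiener application that follows. (One small remark: your parenthetical about the inner product ``carrying a conjugation'' is beside the point here, since for $m_0$ to extend holomorphically the pairing $\vec{n}\circ\vec{t}$ must be bilinear; the indefinite sign of $\mathfrak{Im}(\vec{n}\circ\vec{w})$ comes simply from $\vec{n}$ ranging over $\Lambda_0$.)
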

(For the proof, see Appendix \textsc{Proof of Lemma \ref{lem:ineq}}.)

\begin{proposition}
\label{prop:compactsupport}
The scaling function $\varphi$ is an $L^2 (\R^d)$ function with compact support.
\end{proposition}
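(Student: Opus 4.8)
\textbf{Proof proposal for Proposition \ref{prop:compactsupport}.}

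The plan is to apply Schwartz's Paley--Wiener Theorem to the entire function $g$, which is already known to be entire by Proposition \ref{prop:entire} and square-integrable by Proposition \ref{prop:l2phi}. The goal is to produce constants $C$, $N$ and $B$ so that the growth estimate \eqref{eq:ineqcpt} holds for $g$ on all of $\C^d$; the conclusion that $\varphi=\F^{-1}g$ is a distribution with compact support (in fact supported in the closed ball of radius $B$) is then immediate from the theorem, and combining this with $\varphi\in L^2(\R^d)$ gives that $\varphi$ is an $L^2$-function with compact support.

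First I would write $g(\vx)=\frac{1}{(2\pi)^{d/2}}\prod_{j=1}^\infty m_0((A^\tau)^{-j}\vx)$ and bound the modulus factor by factor using Lemma \ref{lem:ineq}, which supplies constants $B_0,C_0$ with
\[
|m_0((A^\tau)^{-j}\vx)|\leq\Bigl(1+C_0\min\bigl(1,\tfrac{|\vx|}{\beta^j}\bigr)\Bigr)\exp\Bigl(\tfrac{B_0|\mathfrak{Im}(\vx)|}{\beta^j}\Bigr).
\]
Taking the product over $j\geq 1$, the exponential factors multiply to $\exp\bigl(B_0|\mathfrak{Im}(\vx)|\sum_{j\geq1}\beta^{-j}\bigr)=\exp\bigl(B|\mathfrak{Im}(\vx)|\bigr)$ with $B=B_0/(\beta-1)$, using $\beta>1$. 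For the polynomial part I would split the index set at $j_0\approx\log_\beta|\vx|$: for $j\leq j_0$ the factor $1+C_0\min(1,|\vx|/\beta^j)$ is at most $1+C_0$, contributing $(1+C_0)^{j_0}$, which is polynomial in $|\vx|$ of degree $N=\log(1+C_0)/\log\beta$; for $j>j_0$ one has $|\vx|/\beta^j<1$, so $\sum_{j>j_0}C_0|\vx|/\beta^j\leq C_0|\vx|\beta^{-j_0}/(1-\beta^{-1})$ is bounded by a constant, whence $\prod_{j>j_0}(1+C_0|\vx|/\beta^j)\leq\exp(C')$. Assembling these pieces yields $|g(\vx)|\leq C(1+|\vx|)^N e^{B|\mathfrak{Im}(\vx)|}$ for all $\vx\in\C^d$, which is exactly \eqref{eq:ineqcpt}.

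The main obstacle is the bookkeeping in the polynomial factor: one must be careful that the splitting point $j_0$ depends on $|\vx|$, that the count of ``large'' factors is genuinely $O(\log|\vx|)$ rather than $O(\log|\vx|\cdot|\mathfrak{Im}(\vx)|)$ or worse, and that the tail product over small factors is uniformly bounded independent of $\vx$. A clean way to organize this is to bound $\log\prod(1+a_j)\leq\sum a_j$ with $a_j=C_0\min(1,|\vx|/\beta^j)$ and estimate $\sum_j\min(1,|\vx|/\beta^j)\leq \log_\beta(1+|\vx|)+\text{const}$ directly, avoiding an explicit case split; this gives the polynomial growth with constant $N$ depending only on $C_0$ and $\beta$. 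Once the growth bound is in hand, the appeal to Schwartz's Paley--Wiener Theorem finishes the proof, and the explicit radius $B=B_0/(\beta-1)$ records where $\varphi$ is supported.
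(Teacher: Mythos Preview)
Your proposal is correct and follows essentially the same approach as the paper's proof: apply Schwartz's Paley--Wiener Theorem by bounding $|g(\vx)|$ factor-by-factor via Lemma~\ref{lem:ineq}, sum the exponential contributions geometrically to get $B=B_0\sum_{j\geq1}\beta^{-j}$, and split the polynomial product at $j_0$ with $|\vx|\in[\beta^{j_0},\beta^{j_0+1})$ so that the first $j_0$ factors give $(1+C_0)^{j_0}\leq(1+|\vx|)^N$ while the tail is bounded by $\prod_{k\geq0}(1+C_0\beta^{-k})\leq e^{\sum C_0\beta^{-k}}$. Your alternative bookkeeping via $\log\prod(1+a_j)\leq\sum a_j$ with $\sum_j\min(1,|\vx|/\beta^j)\leq\log_\beta(1+|\vx|)+\mathrm{const}$ is a clean equivalent that avoids the explicit case split, but the content is the same.
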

(For the proof, see Appendix \textsc{Proof of Proposition \ref{prop:compactsupport}}.)\\

\section{Parseval's Frame Wavelet Function $\psi$}\label{ss:psi}

In this section we will define a function $\psi$ associated with the scaling function $\varphi.$  In Theorem \ref{theom_frame} we prove that  the function $\psi$ is a Parseval's frame wavelet function associated with matrix $A$.  Since in the proofs of Lemma \ref{lem:identitylj}, Proposition \ref{prop:go0} and
Proposition \ref{prop:isnorm} we use the ideas that we have used in the previous paper \cite{dai2d}, to make this paper readable, we place these proofs in Appendix.

By Definition  \ref{def:g} and Equations \eqref{eq:R}, \eqref{eq:multiplier} and \eqref{eq:3.6} in Lemma \ref{lem:noproof} we have
\begin{eqnarray*}
  \widehat{\varphi}(\vs)
  &=&g(\vs) =  m_0 ((A^\tau)^{-1} \vs) \cdot \frac{1}{(2\pi)^{d/2}}
  \prod_{j=2}^{\infty}m_0((A^{\tau})^{-j} \vs)\\
  &=& m_0 ((A^\tau)^{-1} \vs) g((A^\tau)^{-1} \vs)\\
  &=&
  \frac{1}{\sqrt{2}}\sum_{\vec{n}\in\Lambda_0} h_{\vec{n}} e^{-i\vec{n}\circ (A^\tau)^{-1}\vs} g((A^\tau)^{-1} \vs)\\
  &=&
  \sum_{\vec{n}\in\Lambda_0} h_{\vec{n}} \widehat{T}_{A^{-1}\vec{n}}\widehat{D}_A g(\vs),\textrm{ by \eqref{eq:multiplier} and   \eqref{eq:3.7} } \\
  &=&
  \sum_{\vec{n}\in\Lambda_0} h_{\vec{n}} \widehat{D}_A\widehat{T}_{\vec{n}} \widehat{\varphi}(\vs), \textrm{by \eqref{eq:R}.}
\end{eqnarray*}
Taking  Fourier inverse transform on two sides, we have
\begin{equation}
\label{eq:2rel}
  \varphi = \sum_{\vec{n}\in\Lambda_0} h_{\vec{n}} D_AT_{\vec{n}} \varphi.
\end{equation}
This is the \textit{two scaling equation associate with matrix $A$}. An equivalent form is
\begin{equation*}
\varphi (\vec{t}) = \sqrt{2} \sum_{\vec{n}\in\Lambda_0} h_{\vec{n}} \varphi (A\vec{t}-\vec{n}), \ \vt\in\R^d.
\end{equation*}

\begin{definition}
\label{def:psi}
Define a function $\psi$ on $\R^d$ by
\begin{equation}
\label{eq:defpsi}
  \psi \equiv \sum_{\vec{n}\in\Z^d}
  (-1)^{\vec{q}_A \circ \vec{n}} \overline{h_{\vec{\ell}_A-\vec{n}}} D_AT_{\vec{n}} \varphi .
\end{equation}  \end{definition}

An equivalent statement is
\begin{equation*}
    \psi (\vec{t}) = \sqrt{2} \sum_{\vec{n}\in\Z^d}
     (-1)^{\vec{q}_A \circ \vec{n}} \overline{h_{\vec{\ell}_A-\vec{n}}}  \varphi (A\vec{t}-\vec{n}),\forall \vec{t}\in\R^d.
\end{equation*}
It is clear that this function $\psi$ has a compact support since $\varphi$ has a compact support and $\{h_{\vec{n}}\}$ is a finite solution to Lawton's equations. For $J\in\Z,$ and $f\in L^2(\R^d)$ define

\begin{eqnarray*}
I_J
 &\equiv&
\sum_{\vec{k}\in\Z^d} \langle f,D_A^JT_{\vec{k}}\varphi\rangle D_A^JT_{\vec{k}}\varphi; \\
F_J
&\equiv&
\sum_{\vec{k}\in\Z^d} \langle f,D_A^JT_{\vec{k}}\psi\rangle D_A^JT_{\vec{k}}\psi.
\end{eqnarray*}

\begin{proposition}
\label{prop:telleskope}
Let $f\in L^2(\R^d)$. Then
\begin{equation}
\label{eq:T}
  I_{J+1} = I_J + F_J, \forall J\in\Z.
\end{equation}
\end{proposition}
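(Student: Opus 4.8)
The plan is to recognize $I_{J+1}=I_J+F_J$ as one refinement step of a multiresolution and to reduce it to the classical double-shift (quadrature-mirror-filter) identity among the coefficients $\{h_{\vec n}\}$, which then follows from Lawton's equations \eqref{eq:lawtoneq} together with the partition properties \eqref{eq:(2)}, \eqref{eq:(3)}, \eqref{eq:partition} and \eqref{eq:eq} of Theorem \ref{theom:that}. First I would note that, since $\varphi$ (Proposition \ref{prop:compactsupport}) and $\psi$ (Definition \ref{def:psi}) are compactly supported $L^2(\R^d)$-functions, the families $\{T_{\vec k}\varphi:\vec k\in\Z^d\}$ and $\{T_{\vec k}\psi:\vec k\in\Z^d\}$ are Bessel sequences, so $f\mapsto I_J$, $f\mapsto F_J$ and $f\mapsto I_{J+1}$ are bounded operators on $L^2(\R^d)$. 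Since a bounded operator on a complex Hilbert space is determined by its quadratic form, \eqref{eq:T} is equivalent to
\begin{equation*}
\sum_{\vec k\in\Z^d}|\langle f,D_A^{J+1}T_{\vec k}\varphi\rangle|^2=\sum_{\vec k\in\Z^d}|\langle f,D_A^{J}T_{\vec k}\varphi\rangle|^2+\sum_{\vec k\in\Z^d}|\langle f,D_A^{J}T_{\vec k}\psi\rangle|^2,\qquad\forall f\in L^2(\R^d);
\end{equation*}
replacing $f$ by $D_A^{J}f$ and using that $D_A$ is unitary, it suffices to treat $J=0$.

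Next I would expand everything in terms of the functions $D_AT_{\vec m}\varphi$. From the two-scale relation \eqref{eq:2rel} and \eqref{eq:R} one gets $T_{\vec k}\varphi=\sum_{\vec m\in\Z^d}h_{\vec m-A\vec k}\,D_AT_{\vec m}\varphi$, and from Definition \ref{def:psi}, using that $\vec q_A\circ A\vec k$ is even by \eqref{eq:(3)}, $T_{\vec k}\psi=\sum_{\vec m\in\Z^d}(-1)^{\vec q_A\circ\vec m}\,\overline{h_{\vec\ell_A-\vec m+A\vec k}}\,D_AT_{\vec m}\varphi$. Writing $c_{\vec m}=\langle f,D_AT_{\vec m}\varphi\rangle$, this gives $\langle f,T_{\vec k}\varphi\rangle=\sum_{\vec m}\overline{h_{\vec m-A\vec k}}\,c_{\vec m}$ and $\langle f,T_{\vec k}\psi\rangle=\sum_{\vec m}(-1)^{\vec q_A\circ\vec m}h_{\vec\ell_A-\vec m+A\vec k}\,c_{\vec m}$. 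Substituting these into the $J=0$ identity, squaring, and interchanging the orders of the sums over $\vec k$ and $(\vec m,\vec m')$ --- legitimate because $\{h_{\vec n}\}$ is a \emph{finite} solution, so for each $\vec k$ only finitely many $(\vec m,\vec m')$ contribute and a routine Cauchy--Schwarz estimate yields absolute convergence --- reduces the whole statement to the coefficient identity
\begin{equation*}
\sum_{\vec k\in\Z^d}\Bigl(\overline{h_{\vec m-A\vec k}}\,h_{\vec m'-A\vec k}+(-1)^{\vec q_A\circ(\vec m+\vec m')}\,h_{\vec\ell_A-\vec m+A\vec k}\,\overline{h_{\vec\ell_A-\vec m'+A\vec k}}\Bigr)=\delta_{\vec m\vec m'},\qquad\forall\,\vec m,\vec m'\in\Z^d.
\end{equation*}

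To establish this I would split on the coset of $\vec m-\vec m'$ modulo $A\Z^d$; by \eqref{eq:(2)} there are exactly two cosets, $A\Z^d$ and $\vec\ell_A+A\Z^d$, and from \eqref{eq:(2)} one also gets $2\vec\ell_A\in A\Z^d$. Note $(-1)^{\vec q_A\circ(\vec m+\vec m')}=(-1)^{\vec q_A\circ(\vec m-\vec m')}$, since the exponents differ by $2\vec q_A\circ\vec m'$. If $\vec m-\vec m'\in A\Z^d$, this sign is $+1$ by \eqref{eq:(3)}; as $\vec k$ ranges over $\Z^d$, $\vec m-A\vec k$ ranges over $\vec m+A\Z^d$, while by \eqref{eq:partition} $\vec\ell_A-\vec m+A\vec k$ ranges over the complementary coset, and after reindexing the second sum has the same summand $\overline{h_{\vec n}}\,h_{\vec n+(\vec m'-\vec m)}$ over that complementary coset; adding the two sums gives $\sum_{\vec n\in\Z^d}\overline{h_{\vec n}}\,h_{\vec n+(\vec m'-\vec m)}=\delta_{\vec m\vec m'}$ by \eqref{eq:lawtoneq}. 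If $\vec m-\vec m'\notin A\Z^d$, then $\delta_{\vec m\vec m'}=0$ and the sign is $-1$ by \eqref{eq:(3)}; reindexing shows that, by \eqref{eq:eq}, the two sums run over the \emph{same} coset $\vec m+A\Z^d$ with identical summands, so the opposite signs produce exact cancellation and the left side is $0$.

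The ideas are short; the real work --- and the step I expect to be the main obstacle --- is the bookkeeping in the last paragraph: correctly matching the shifted cosets $\vec m+A\Z^d$, $\vec\ell_A-\vec m+A\Z^d$ and $\vec m'+A\Z^d$ under the substitutions, and keeping track of the sign $(-1)^{\vec q_A\circ(\vec m+\vec m')}$, which is exactly where properties \eqref{eq:(2)}--\eqref{eq:eq} of the Partition Theorem (together with the inclusion $(2\Z)^d\subseteq A\Z^d$) are used. A secondary, routine point to dispatch carefully is the Fubini/absolute-convergence justification for the interchange of summations in the expansion step.
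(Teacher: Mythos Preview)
Your proposal is correct and follows essentially the same route as the paper: expand $T_{\vec k}\varphi$ and $T_{\vec k}\psi$ via the two-scale relations, reduce to a coefficient identity in the $h_{\vec n}$'s, and verify that identity by splitting on the coset of $\vec m-\vec m'$ in $\Z^d/A\Z^d$, invoking \eqref{eq:partition}, \eqref{eq:eq}, \eqref{eq:(3)} and Lawton's equations \eqref{eq:lawtoneq}. The only cosmetic difference is that the paper works directly with the vector identity $I_{-1}+F_{-1}=I_0$ (expanding both sides as $\sum_{\vec m,\vec n}(\alpha_{\vec m,\vec n}+\beta_{\vec m,\vec n})\langle f,T_{\vec m+\vec n}\varphi\rangle T_{\vec n}\varphi$ and showing $\alpha_{\vec m,\vec n}+\beta_{\vec m,\vec n}=\delta_{\vec m,\vec 0}$), whereas you first pass to the quadratic form via polarization; this extra step is valid but unnecessary, since the same coefficient computation establishes the operator identity directly.
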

(For the proof, see Appendix \textsc{Proof of Proposition \ref{prop:telleskope}}.)\\

For $f\in L^2 (\R^d)$ and $J\in \Z,$ we will use the following notations.
\begin{eqnarray*}
  L_0(f) &\equiv& \sum_{\vec{\ell} \in \Z^d} |\langle f,T_{\vec{\ell}}\varphi \rangle |^2.\\
  L_J(f) &\equiv& \sum_{\vec{\ell} \in \Z^d} |\langle f,D_A^JT_{\vec{\ell}}\varphi \rangle |^2 = L_0 ((D^J _A )^* f).\\
\end{eqnarray*}

Let $\rho>0,$ we define functions $f_\rho$ and
$f_{\overline{\rho}}$ by
\begin{eqnarray*}
  \widehat{f_\rho} &\equiv& \widehat{f} \cdot \chi_{\{|\vt|\leq \rho\}}. \\
  \widehat{f_{\overline{\rho}}}  &\equiv & \widehat{f} \cdot \chi_{\{|\vt| > \rho\}}.
\end{eqnarray*}
Here $\chi$ is the characteristic function.
It is clear that we have
\begin{lemma}
\label{lem:rho}
The function $f_\rho$ and $f_{\overline{\rho}}$ have the following properties,
\begin{enumerate}
  \item $f = f_\rho+f_{\overline{\rho}}$,
  \item $\|f\|^2 = \|\widehat{f}\|^2=\|f_\rho\|^2 +\|f_{\overline{\rho}}\|^2,$
  \item $\lim_{\rho\rightarrow\infty}\|f_\rho\|^2 = \|f\|^2,$
  \item $\lim_{\rho\rightarrow\infty}\|f_{\overline{\rho}}\|^2 = 0.$
\end{enumerate}
\end{lemma}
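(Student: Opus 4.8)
The plan is to derive all four assertions from one elementary pointwise identity, namely $\chi_{\{|\vt|\leq\rho\}}+\chi_{\{|\vt|>\rho\}}\equiv 1$ on $\R^d$, together with the Plancherel theorem (the unitarity of $\F$ on $L^2(\R^d)$). For item (1), I would add the two defining equations to get $\widehat{f_\rho}+\widehat{f_{\overline{\rho}}}=\widehat{f}$ almost everywhere, and then apply $\F^{-1}$, using its linearity, to conclude $f_\rho+f_{\overline{\rho}}=f$.

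For item (2), the equality $\|f\|^2=\|\widehat{f}\|^2$ is Plancherel. For the second equality I would observe that $\widehat{f_\rho}$ and $\widehat{f_{\overline{\rho}}}$ are supported on the disjoint sets $\{|\vt|\leq\rho\}$ and $\{|\vt|>\rho\}$, so $\langle \widehat{f_\rho},\widehat{f_{\overline{\rho}}}\rangle=0$; hence $\|\widehat{f}\|^2=\|\widehat{f_\rho}+\widehat{f_{\overline{\rho}}}\|^2=\|\widehat{f_\rho}\|^2+\|\widehat{f_{\overline{\rho}}}\|^2$ by the Pythagorean identity, and a further application of Plancherel to each summand gives $\|f\|^2=\|f_\rho\|^2+\|f_{\overline{\rho}}\|^2$.

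For item (3), I would write $\|f_\rho\|^2=\|\widehat{f_\rho}\|^2=\int_{\{|\vt|\leq\rho\}}|\widehat{f}(\vt)|^2\,d\vt$ and let $\rho\to\infty$; since the integrands $|\widehat{f}|^2\chi_{\{|\vt|\leq\rho\}}$ increase pointwise to $|\widehat{f}|^2$ as $\rho$ grows, the monotone convergence theorem yields $\lim_{\rho\to\infty}\|f_\rho\|^2=\int_{\R^d}|\widehat{f}(\vt)|^2\,d\vt=\|\widehat{f}\|^2=\|f\|^2$. Finally, item (4) follows immediately by combining (2) and (3): $\|f_{\overline{\rho}}\|^2=\|f\|^2-\|f_\rho\|^2\to 0$. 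There is no genuine obstacle in this lemma; the only step requiring a word of justification is the passage to the limit in (3), which is handled by monotone convergence because the balls $\{|\vt|\leq\rho\}$ increase to $\R^d$.
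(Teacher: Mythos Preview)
Your argument is correct. The paper itself offers no proof of this lemma at all---it simply prefaces the statement with ``It is clear that we have''---so your write-up supplies exactly the routine justification (the pointwise identity $\chi_{\{|\vt|\leq\rho\}}+\chi_{\{|\vt|>\rho\}}=1$, Plancherel, disjoint supports, and monotone convergence) that the author left implicit.
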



\begin{theorem}\label{theom_frame}
Let $\psi$ be as defined in Definition \ref{def:psi}. Then,  $\{D_A ^n T_{\vec{\ell}}\psi, n\in\Z,\vec{\ell}\in\Z^d\}$ is a Parseval's frame for $L^2(\R^d)$.
\end{theorem}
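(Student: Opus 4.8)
The plan is to show that the frame identity \eqref{eq:framew} holds for every $f\in L^2(\R^d)$ by exploiting the telescoping relation \eqref{eq:T} from Proposition \ref{prop:telleskope} together with control of the ``tail'' quantities $L_J(f)$. First I would fix $f\in L^2(\R^d)$ and, iterating \eqref{eq:T}, write for any $M<N$
\begin{equation*}
  I_N = I_M + \sum_{J=M}^{N-1} F_J ,
\end{equation*}
so that $\sum_{J=M}^{N-1}F_J = I_N - I_M$. Since $F_J\geq 0$ in the sense that $\sum_{\vec k}\langle f,D_A^JT_{\vec k}\psi\rangle D_A^JT_{\vec k}\psi$ arises from a nonnegative quadratic form, taking inner products with $f$ gives the scalar telescoping
\begin{equation*}
  \sum_{J=M}^{N-1}\sum_{\vec\ell\in\Z^d}|\langle f,D_A^JT_{\vec\ell}\psi\rangle|^2
  = \langle I_N,f\rangle - \langle I_M,f\rangle
  = L_N(f) - L_M(f).
\end{equation*}
Hence the full bilateral sum in \eqref{eq:framew} equals $\lim_{N\to\infty}L_N(f) - \lim_{M\to-\infty}L_M(f)$, and the theorem reduces to the two limit statements: $\lim_{N\to\infty}L_N(f)=\|f\|^2$ and $\lim_{M\to-\infty}L_M(f)=0$.

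For the limit as $N\to+\infty$, I would use $L_J(f)=L_0((D_A^J)^*f)$ and pass to the frequency side. Because $\varphi=\F^{-1}g$ with $g$ continuous, $g(\vec 0)=\prod_j m_0(\vec 0)=(2\pi)^{-d/2}$ (recall $m_0(\vec 0)=1$), one expects $\widehat\varphi$ to behave like a ``low-pass'' generator: as $J\to\infty$ the dilation $(D_A^J)^*$ rescales $\widehat f$ so that its mass concentrates near the origin, where $|\widehat\varphi|^2$ averages to the constant $1$ over a fundamental domain of $2\pi(A^\tau)^{-?}\Z^d$. Concretely I would first verify the claim on the dense set of $f$ with $\widehat f$ bounded and compactly supported, using Lemma \ref{lem:rho} to reduce to such $f$, and then express $L_0((D_A^J)^*f)$ via the periodization $\sum_{\vec k}|\widehat\varphi(\vec s+2\pi\vec k)|^2$; the key analytic input is that this periodization is continuous with value $1$ at $\vec s=\vec 0$, which follows from Proposition \ref{prop:filter} (the quadrature identity $|m_0(\vt)|^2+|m_0(\vt+\pi\vec q_A)|^2=1$) iterated down the scales, exactly as in the classical Mallat/Daubechies argument. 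For the limit as $M\to-\infty$, the opposite rescaling spreads $\widehat f$ out to infinity, and since $\widehat\varphi\in L^2$ its tail mass vanishes; again I would first check this for $\widehat f$ compactly supported away from $\vec 0$ and bounded, use dominated convergence against $\|g\|_{L^2}^2<\infty$ (Proposition \ref{prop:l2phi}), and extend by density via Lemma \ref{lem:rho}.

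The main obstacle I anticipate is the $N\to+\infty$ limit, specifically establishing that the periodization $P(\vec s)\equiv\sum_{\vec k\in\Z^d}|\widehat\varphi(\vec s+2\pi\vec k)|^2$ is continuous at the origin with $P(\vec 0)=1$. This is where the Partition Theorem properties \eqref{eq:(1)}--\eqref{eq:partition} and the quadrature identity \eqref{m0eq} must be combined carefully: one splits $\Z^d=A^\tau\Z^d\,\dotcup\,(\vec\ell+A^\tau\Z^d)$ (using $A\Z^d=A^\tau\Z^d$), writes $\widehat\varphi(\vec s)=m_0((A^\tau)^{-1}\vec s)\,\widehat\varphi((A^\tau)^{-1}\vec s)$ from the two-scale relation \eqref{eq:2rel}, and uses the fact that $A^\tau\vec q_A\in(2\Z)^d$ (shown in the proof of Proposition \ref{prop:l2phi}) so that translating by $\pi\vec q_A$ interchanges the two cosets while leaving the periodic factor $m_0((A^\tau)^{-1}\vec s)$ shifted exactly by $\pi\vec q_A$. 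Iterating this splitting $J$ times and letting $J\to\infty$, with a uniform bound $\prod|m_0((A^\tau)^{-j}\vec s)|^2\le 1$ from Corollary \ref{less1} to justify the interchange of limit and sum, yields $P(\vec 0)=1$ and local near-constancy. Once the periodization fact is in hand, the two limits and hence \eqref{eq:framew} follow, completing the proof.
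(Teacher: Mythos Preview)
Your overall architecture is right and matches the paper: reduce to the scalar telescoping identity
\[
\sum_{J=M}^{N-1}\sum_{\vec\ell\in\Z^d}|\langle f,D_A^JT_{\vec\ell}\psi\rangle|^2 = L_N(f)-L_M(f)
\]
via Proposition~\ref{prop:telleskope}, and then establish the two limits. The paper packages these as Proposition~\ref{prop:isnorm} ($L_J\to\|f\|^2$) and Proposition~\ref{prop:go0} ($L_{-J}\to 0$), invoking Lemma~\ref{lem:identitylj} for the first.

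Where your proposal goes wrong is the mechanism for $\lim_{N\to\infty}L_N(f)=\|f\|^2$. You aim to show that the periodization $P(\vec s)=\sum_{\vec k}|\widehat\varphi(\vec s+2\pi\vec k)|^2$ is continuous at $\vec 0$ with value $(2\pi)^{-d}$, by splitting $\Z^d$ into cosets of $A^\tau\Z^d$ and iterating the two–scale relation together with \eqref{m0eq}. That iteration does not close up: after one step one gets
\[
P(\vec s)=|m_0((A^\tau)^{-1}\vec s)|^2\,P((A^\tau)^{-1}\vec s)+|m_0((A^\tau)^{-1}\vec s+2\pi(A^\tau)^{-1}\vec\ell_A)|^2\,P((A^\tau)^{-1}\vec s+2\pi(A^\tau)^{-1}\vec\ell_A),
\]
and neither is $2\pi(A^\tau)^{-1}\vec\ell_A\equiv\pi\vec q_A\pmod{2\pi\Z^d}$ guaranteed (so \eqref{m0eq} does not apply directly), nor is $P$ invariant under the shift by $2\pi(A^\tau)^{-1}\vec\ell_A$ (since $\vec\ell_A\notin A^\tau\Z^d$). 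At $\vec s=\vec 0$ the relation degenerates to $P(\vec 0)=P(\vec 0)+(\text{nonneg})$, which gives no information about $P(\vec 0)$. More fundamentally, the periodization is \emph{not} the object that enters $L_J(f)$: for $\widehat f$ compactly supported and $J$ large one has simply
\[
L_J(f)=(2\pi)^d\int_{\R^d}|\widehat f(\vec u)|^2\,|\widehat\varphi((A^\tau)^{-J}\vec u)|^2\,d\vec u,
\]
with no lattice sum over $|\widehat\varphi|^2$ at all.

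The paper's route is shorter and avoids this detour. Lemma~\ref{lem:identitylj} writes $L_J(f)$ as an integral over $\R^d$ of a sum over $\vec\ell\in\Z^d$; the $\vec\ell=\vec 0$ term is exactly $U_J(f)=(2\pi)^d\int|\widehat f|^2|\widehat\varphi((A^\tau)^{-J}\cdot)|^2$, which tends to $\|f\|^2$ by dominated convergence using only $\widehat\varphi(\vec 0)=(2\pi)^{-d/2}$ and Corollary~\ref{less1}. For $\widehat f$ supported in a ball of radius $\rho$ the remaining terms $V_J(f)$ vanish once $J$ is so large that $|2\pi(A^\tau)^J\vec\ell|>2\rho$ for every $\vec\ell\neq\vec 0$; one then passes from $f_\rho$ to general $f$ using the uniform bound $L_J(f)\le C\|f\|^2$ (established in the proof of Proposition~\ref{prop:go0}). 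For the limit $L_{-J}(f)\to 0$ the paper works in the \emph{time} domain, exploiting the compact support of $\varphi$ (Proposition~\ref{prop:compactsupport}) rather than the $L^2$ decay of $\widehat\varphi$ you propose; your frequency-side sketch is plausible but would need an independent uniform bound to justify the density step, which the paper already extracts from the time-domain argument.
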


\begin{proof}
Let $f\in L^2(\R^d).$ We will prove that
\begin{equation}
\label{eq:psieq2}
f = \sum_{n\in\Z}\sum_{\vec{\ell}\in\Z^d}\langle f, D_A^n T_{\vec{\ell}}\psi\rangle D_A ^n T_{\vec{\ell}}\psi,
\end{equation}
the convergence is in $L^2 (\R^d)$-norm.

By Proposition \ref{prop:telleskope},
we have
$I_j - I_{j-1} =  F_{j-1}$, $\forall j\in\Z$.
Hence
\begin{equation*}
  \sum_{j=-J+1}^{J}F_{j} = I_J - I_{-J}, \forall J\in\Z.
\end{equation*}
This implies that
\begin{eqnarray*}
&&  \sum_{j=-J+1} ^J \sum_{\vec{\ell}\in\Z^d}\langle f, D_A^j T_{\vec{\ell}}\psi\rangle D_A ^j T_{\vec{\ell}}\psi\\
&=&\sum_{\vec{\ell}\in\Z^d}\langle f, D_A^JT_{\vec{\ell}}\varphi\rangle D_A^JT_{\vec{\ell}}\varphi  - \sum_{\vec{\ell}\in\Z^d}\langle f, D_A^{-J}T_{\vec{\ell}}\varphi\rangle D^{-J}T_{\vec{\ell}}\varphi.
\end{eqnarray*}
 Taking inner product of $f$  with both sides of the equation, we have
\begin{eqnarray*}
\sum_{j=-J+1} ^J \sum_{\vec{\ell}\in\Z^d}
|\langle f, D_A^j T_{\vec{\ell}}\psi\rangle|^2
&=& L_J(f) - L_{-J}(f).
\end{eqnarray*}
By Proposition \ref{prop:go0} and Proposition \ref{prop:isnorm}, we have

\begin{align*}
\lim_{J\rightarrow+\infty} & L_J(f)  =  \| f \|^2; \\
\lim_{J\rightarrow+\infty} & L_{-J}(f)  =  0.
\end{align*}
So, we have
\begin{equation*}
\sum_{j\in \Z} \sum_{\vec{\ell}\in\Z^d}|\langle f, D_A^j T_{\vec{\ell}}\psi\rangle|^2
=\|f\|^2, \forall f\in L^2(\R^d).
\end{equation*}

\end{proof}

\begin{proposition}
\label{prop:go0}
Let  $f\in \L^2(\R^d)$. Then
\begin{equation*}
\lim_{J\rightarrow+\infty}
L_{-J}(f)=0.
\end{equation*}
\end{proposition}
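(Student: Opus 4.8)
The plan is to reduce the claim $\lim_{J\to\infty}L_{-J}(f)=0$ to a statement about how the mass of $\widehat f$ escapes to infinity under the iterated contraction $(A^\tau)^{-1}$, and then to exploit the fact that $\varphi$ has compact support (Proposition \ref{prop:compactsupport}), so that $\widehat\varphi=g$ is bounded and continuous with $g(\vec 0)=1$. First I would rewrite $L_{-J}(f)$ on the Fourier side: by Plancherel and Lemma \ref{lem:noproof} (Equations \eqref{eq:multiplier}, \eqref{eq:3.7}), $\langle f, D_A^{-J}T_{\vec\ell}\varphi\rangle$ becomes, up to the normalizing factor $2^{-J/2}$, the $\vec\ell$-th Fourier coefficient of the $2\pi$-periodization of $\overline{\widehat f(\vt)}\,\widehat\varphi((A^\tau)^{J}\vt)$ over the lattice whose fundamental domain is $(A^\tau)^{-J}\Gamma_\pi$. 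Applying Parseval for Fourier series on that lattice gives
\begin{equation*}
L_{-J}(f)=\int_{(A^\tau)^{-J}\Gamma_\pi}\Big|\sum_{\vec m\in\Z^d}\widehat f\big(\vt+(A^\tau)^{-J}\cdot 2\pi\vec m\big)\,\overline{\widehat\varphi\big((A^\tau)^{J}\vt+2\pi\vec m\big)}\Big|^2\,d\vt,
\end{equation*}
or more conveniently, after the change of variables $\vs=(A^\tau)^{J}\vt$ (Jacobian $2^{-J}$ cancels the two factors of $2^{-J/2}$ squared against $|\det A^\tau|^J=2^J$) one lands on an expression of the shape $\sum_{\vec m}\int_{\Gamma_\pi+2\pi\vec m}|\widehat f((A^\tau)^{-J}\vs)|^2\,|\widehat\varphi(\vs)|^2\,d\vs=\int_{\R^d}|\widehat f((A^\tau)^{-J}\vs)|^2|\widehat\varphi(\vs)|^2\,d\vs$ if one is careful about the cross terms; the honest route is to keep the periodization and bound it.

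The key estimate is then the following. Split $f=f_\rho+f_{\overline\rho}$ as in Lemma \ref{lem:rho}. For the low-frequency part $f_\rho$, the vector $(A^\tau)^{-J}\vs$ ranges over a set shrinking to $\vec 0$ as $J\to\infty$ (since $A^\tau$ is expansive, $\|(A^\tau)^{-J}\|\to 0$), so $|\widehat{f_\rho}((A^\tau)^{-J}\vs)|$ is supported on $\{\vs:\ |\vs|\le \|A^\tau\|^{J}\rho\}$, but more usefully: writing things on the time side, $D_A^{-J}T_{\vec\ell}\varphi = U_{A^{-1}}^{J}T_{\vec\ell}\varphi$ is a function supported on $A^{J}(\mathrm{supp}\,\varphi-\vec\ell)$, a set whose diameter grows like $\|A^{-1}\|^{-J}$; the overlaps $\langle f_\rho, D_A^{-J}T_{\vec\ell}\varphi\rangle$ become uniformly small and $\ell^2$-summable control is obtained because $\{D_A^{-J}T_{\vec\ell}\varphi:\vec\ell\in\Z^d\}$ is a Bessel sequence with a bound independent of $J$ — this last fact follows from $\widehat\varphi\in L^\infty$ and the standard periodization argument (it is implicitly the same mechanism used for $I_J$ in Proposition \ref{prop:telleskope}). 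Concretely: $L_{-J}(f)\le \|\widehat\varphi\|_\infty^2\cdot(\text{Bessel const})\cdot$ an integral of $|\widehat f|^2$ over a region that, for the $f_\rho$ piece, has measure tending to $0$, giving $\limsup_J L_{-J}(f)\le C\|f_{\overline\rho}\|^2$ for every $\rho$, and then $\rho\to\infty$ via Lemma \ref{lem:rho}(4) finishes it.

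The main obstacle I anticipate is making the periodization/Parseval bookkeeping rigorous: one must correctly identify the lattice dual to $\{D_A^{-J}T_{\vec\ell}\varphi\}$, handle the cross terms in $|\sum_{\vec m}\ldots|^2$ (they do not vanish unless $\varphi$ is orthogonal, which is \emph{not} assumed here), and produce a Bessel bound for the family $\{D_A^{-J}T_{\vec\ell}\varphi\}$ that is uniform in $J$ — the uniformity is what lets the $\rho$-argument go through. I would get the uniform Bessel bound from $\sum_{\vec m\in\Z^d}|\widehat\varphi(\vs+2\pi\vec m)|^2\le C$ for a.e.\ $\vs$, which holds because $\widehat\varphi=g$ is bounded with rapid-ish decay inherited from $\varphi$ having compact support (so $g$ is band-limited-type smooth); once that periodized sum is in $L^\infty(\Gamma_\pi)$, the Bessel constant for $\{T_{\vec\ell}\varphi\}$ is exactly its supremum, and dilating by $D_A^{-J}$ is unitary so the constant is unchanged. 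With that in hand the two limits reduce to dominated convergence plus Lemma \ref{lem:rho}, and the proof closes.
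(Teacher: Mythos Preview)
Your overall architecture---a uniform-in-$J$ Bessel bound for $\{D_A^{-J}T_{\vec\ell}\varphi\}$ plus a truncation of $f$---is reasonable, but the justification you give for the Bessel bound is backwards. Compact support of $\varphi$ yields \emph{smoothness} (indeed analyticity) of $\widehat\varphi=g$, not decay; ``rapid-ish decay inherited from $\varphi$ having compact support'' is the wrong direction of the uncertainty principle, and nothing in the paper gives you $\sum_{\vec m}|\widehat\varphi(\cdot+2\pi\vec m)|^2\in L^\infty(\Gamma_\pi)$ directly. The paper obtains the Bessel bound in the time domain instead: since $\operatorname{supp}\varphi\subset[-B,B)^d$, for each fixed residue $\vec d\in\Lambda_B=\Z^d\cap[-B,B]^d$ the translates $T_{(2B+1)\vec\ell+\vec d}\varphi$ have pairwise disjoint supports, hence $\sum_{\vec\ell}|\langle f,T_{(2B+1)\vec\ell+\vec d}\varphi\rangle|^2\le\|\varphi\|^2\|f\|^2$, and summing over the finitely many $\vec d$ gives $L_0(f)\le(2B+1)^d\|\varphi\|^2\|f\|^2$. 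Unitarity of $D_A$ makes this uniform in $J$.

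For the limit itself the paper does \emph{not} use the frequency split $f=f_\rho+f_{\overline\rho}$ or the periodization identity at all; it stays in the time domain and splits the \emph{lattice sum}. Writing $\vec n=(2B+1)\vec\ell+\vec d$, the terms with $\vec\ell\neq\vec 0$ have $D_A^{-J}T_{\vec n}\varphi$ supported in $A^J(E_B+(2B+1)\vec\ell+\vec d)\subset\R^d\setminus A^JE_0$, so their total contribution is at most a constant times $\int_{\R^d\setminus A^JE_0}|f|^2\to 0$. The finitely many remaining terms $|\langle f,D_A^{-J}T_{\vec d}\varphi\rangle|^2$, $\vec d\in\Lambda_B$, each tend to $0$ by a separate truncation $f_N=\chi_{[-N,N]^d}f$ and the observation that $A^{-J}[-N,N]^d$ has measure shrinking to zero. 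Your Fourier-side route runs into exactly the obstacle you flag: as $J\to\infty$ the shifts $2\pi(A^\tau)^{-J}\vec\ell$ become \emph{dense}, so the cross terms in the periodization proliferate rather than vanish, and your ``integral over a region of measure tending to $0$'' never materializes for the band-limited piece $f_\rho$. The time-domain argument sidesteps all of this.
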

(For the proof, see Appendix \textsc{Proof of Proposition \ref{prop:go0}}.)\\

\begin{lemma}
\label{lem:identitylj}
Let $f\in L^2(\R^d)$ and $J\in\Z.$ Then
\begin{equation*}
L_J(f) = (2\pi)^d
\int_{\R^d} \sum_{\vec{\ell}\in\Z^d}  \Big( \widehat{f}(\vt)\overline{\widehat{f}(\vt-2\pi (A^\tau)^J\vec{\ell})}  \widehat{\varphi}((A^\tau)^{-J}\vt-2\pi \vec{\ell}) \overline{\widehat{\varphi}((A^\tau)^{-J}\vt)}\Big) d\vt
\end{equation*}
\end{lemma}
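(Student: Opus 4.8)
The plan is to transfer the computation to the frequency side and to recognize $L_J(f)$ as the squared $\ell^2$-norm of a sequence of Fourier coefficients of a suitable periodic function. First I would handle the building blocks: using the Plancherel theorem together with \eqref{eq:multiplier} and \eqref{eq:3.7} of Lemma \ref{lem:noproof}, and recalling $|\det A| = |\det A^\tau| = 2$, one computes
\[
  \widehat{D_A^J T_{\vec{\ell}}\varphi}(\vt) = \frac{1}{\sqrt{2^J}}\, e^{-i\,((A^\tau)^{-J}\vt)\circ\vec{\ell}}\, \widehat{\varphi}((A^\tau)^{-J}\vt),
\]
hence
\[
  \langle f, D_A^J T_{\vec{\ell}}\varphi\rangle = \frac{1}{\sqrt{2^J}}\int_{\R^d}\widehat{f}(\vt)\, e^{i\,((A^\tau)^{-J}\vt)\circ\vec{\ell}}\,\overline{\widehat{\varphi}((A^\tau)^{-J}\vt)}\, d\vt .
\]
The change of variables $\vt = (A^\tau)^J\vec{w}$ (Jacobian $2^J$) turns this into $\sqrt{2^J}\int_{\R^d} F(\vec{w})\, e^{i\,\vec{w}\circ\vec{\ell}}\, d\vec{w}$, where $F(\vec{w}) \equiv \widehat{f}((A^\tau)^J\vec{w})\,\overline{\widehat{\varphi}(\vec{w})}$; since $\widehat{f}\circ(A^\tau)^J$ and $\widehat{\varphi}$ both lie in $L^2(\R^d)$ (the latter by Proposition \ref{prop:l2phi}), $F\in L^1(\R^d)$ by Cauchy--Schwarz.

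Next I would periodize: set $\Phi(\vec{w}) \equiv \sum_{\vec{k}\in\Z^d} F(\vec{w}+2\pi\vec{k})$, a $2\pi$-periodic function in $L^1([-\pi,\pi)^d)$ whose $\vec{\ell}$-th Fourier coefficient equals $(2\pi)^{-d}\int_{\R^d}F(\vec{w})\, e^{-i\,\vec{w}\circ\vec{\ell}}\, d\vec{w}$. Thus $\langle f, D_A^J T_{\vec{\ell}}\varphi\rangle$ is, up to the factor $\sqrt{2^J}\,(2\pi)^d$, the $(-\vec{\ell})$-th Fourier coefficient of $\Phi$, and Parseval's identity for Fourier series gives
\[
  L_J(f) = 2^J\,(2\pi)^{d}\int_{[-\pi,\pi)^d}|\Phi(\vec{w})|^2\, d\vec{w}.
\]
Then I would unfold $|\Phi|^2$: expanding $|\Phi(\vec{w})|^2 = \sum_{\vec{k},\vec{m}}F(\vec{w}+2\pi\vec{k})\overline{F(\vec{w}+2\pi\vec{m})}$, substituting $\vec{m} = \vec{k}+\vec{\ell}$, and absorbing the sum over $\vec{k}$ into an integral over $\R^d$, one obtains $\int_{[-\pi,\pi)^d}|\Phi|^2 = \sum_{\vec{\ell}}\int_{\R^d}F(\vec{w})\overline{F(\vec{w}+2\pi\vec{\ell})}\,d\vec{w}$. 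Inserting $F(\vec{w}) = \widehat{f}((A^\tau)^J\vec{w})\overline{\widehat{\varphi}(\vec{w})}$, replacing $\vec{\ell}$ by $-\vec{\ell}$ in the summation, and changing variables back via $\vt = (A^\tau)^J\vec{w}$ (Jacobian $2^{-J}$, which cancels the $2^J$ in front) produces exactly the asserted formula.

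The computation itself is routine; the delicate point, which I expect to be the main obstacle, is justifying the two interchanges of summation and integration and, above all, invoking \emph{Parseval} rather than merely Bessel in the middle step, which requires $\Phi\in L^2$ and not just $\Phi\in L^1$. I would circumvent this by first establishing the identity for $f$ with $\widehat{f}$ bounded and compactly supported — where $F$ has compact support, $\Phi$ is a finite sum of bounded functions, and every interchange is by Tonelli — and then extending to arbitrary $f\in L^2(\R^d)$ by density, using that $\varphi$ has compact support (Proposition \ref{prop:compactsupport}), so that $\widehat{\varphi}$ is continuous and bounded and $\sum_{\vec{\ell}}|\widehat{\varphi}(\vt-2\pi\vec{\ell})|^2$ is locally bounded; this makes both sides depend continuously on $\widehat{f}$ on the relevant compact pieces and lets one pass to the limit. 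Careful bookkeeping of the $2^{\pm J}$ Jacobian factors — legitimate precisely because $|\det A^\tau| = 2$ — is the other place to stay alert.
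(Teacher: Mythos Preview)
Your proposal is correct and follows essentially the same periodization argument the paper has in mind: pass to the frequency side via Plancherel and \eqref{eq:multiplier}, \eqref{eq:3.7}, recognize $\langle f, D_A^J T_{\vec{\ell}}\varphi\rangle$ as (a constant times) a Fourier coefficient of the periodization $\Phi$ of $F(\vec{w})=\widehat{f}((A^\tau)^J\vec{w})\overline{\widehat{\varphi}(\vec{w})}$, apply Parseval for Fourier series, and unfold $|\Phi|^2$ back to an integral over $\R^d$ --- the paper explicitly says the proof uses the same ideas as in \cite{dai2d}, and this is the standard route. Your care in first treating $f$ with $\widehat{f}$ bounded and compactly supported (so all sums are finite and every interchange is elementary) and then extending by density, using the boundedness of $\widehat{\varphi}$ from Proposition~\ref{prop:compactsupport}, is exactly what is needed to make the Parseval step rigorous.
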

(For the proof, see Appendix \textsc{Proof of Lemma \ref{lem:identitylj}}.)\\

\begin{proposition}
\label{prop:isnorm}
 We have
\begin{equation}
\lim _{J\rightarrow + \infty} L_J(f) = \|f\|^2, \forall f\in L^2(\R^d).
\end{equation}
\end{proposition}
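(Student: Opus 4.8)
The plan is to start from the integral identity for $L_J(f)$ provided by Lemma \ref{lem:identitylj} and to show that, as $J\to+\infty$, every term in the sum over $\vec\ell\in\Z^d$ except $\vec\ell=\vec 0$ vanishes, while the $\vec\ell=\vec 0$ term converges to $\|f\|^2$. Concretely, Lemma \ref{lem:identitylj} gives
\[
L_J(f) = (2\pi)^d \int_{\R^d} \sum_{\vec\ell\in\Z^d} \widehat f(\vt)\,\overline{\widehat f(\vt-2\pi (A^\tau)^J\vec\ell)}\,\widehat\varphi((A^\tau)^{-J}\vt-2\pi\vec\ell)\,\overline{\widehat\varphi((A^\tau)^{-J}\vt)}\,d\vt,
\]
and the $\vec\ell=\vec 0$ contribution is $(2\pi)^d\int_{\R^d}|\widehat f(\vt)|^2\,|\widehat\varphi((A^\tau)^{-J}\vt)|^2\,d\vt$. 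Since $A$ is expansive, $(A^\tau)^{-J}\vt\to\vec 0$ for every fixed $\vt$, and $\widehat\varphi$ is continuous (it is $g$, an entire function by Proposition \ref{prop:entire}) with $\widehat\varphi(\vec 0)=g(\vec 0)=\frac1{(2\pi)^{d/2}}\prod_j m_0(\vec 0)=\frac1{(2\pi)^{d/2}}$; by Corollary \ref{less1} we also have $|m_0|\le 1$, hence $|(2\pi)^{d/2}\widehat\varphi((A^\tau)^{-J}\vt)|=\prod_{j=1}^\infty |m_0((A^\tau)^{-J-j}\vt)|\le 1$. So the integrand of the $\vec\ell=\vec 0$ term is dominated by $|\widehat f(\vt)|^2/(2\pi)^d\in L^1$ and converges pointwise to $|\widehat f(\vt)|^2/(2\pi)^d$; dominated convergence gives that this term tends to $(2\pi)^d\cdot(2\pi)^{-d}\|\widehat f\|^2=\|f\|^2$.

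The main work is to show the remaining sum $\sum_{\vec\ell\ne\vec 0}(\cdots)$ tends to $0$. I would first reduce to $f$ with $\widehat f$ compactly supported and bounded: by Lemma \ref{lem:rho} such $f$ are dense, $L_J$ is built from the frame-type sum $L_0((D_A^J)^*f)$ which is uniformly bounded (this is exactly where Proposition \ref{prop:isnorm}'s uniform-bound companion — the fact that $\{D_A^nT_{\vec\ell}\varphi\}$ has a Bessel-type bound, used implicitly via Proposition \ref{prop:telleskope}/\ref{prop:go0} — enters), so a standard $3\varepsilon$ argument lets us assume $\operatorname{supp}\widehat f\subseteq B(\vec 0,\rho)$ and $\|\widehat f\|_\infty<\infty$. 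Then in the cross term $\widehat f(\vt)\overline{\widehat f(\vt-2\pi(A^\tau)^J\vec\ell)}$ both factors are nonzero only when $\vt\in B(\vec 0,\rho)$ and $\vt-2\pi(A^\tau)^J\vec\ell\in B(\vec 0,\rho)$, which forces $2\pi|(A^\tau)^J\vec\ell|<2\rho$; since $A^\tau$ is expansive, $|(A^\tau)^J\vec\ell|\ge \beta^J|\vec\ell|$ with $\beta>1$ (here $\beta=\|(A^\tau)^{-1}\|^{-1}$ as in the excerpt), so for $J$ large enough the condition $\beta^J|\vec\ell|<\rho/\pi$ has no solution with $\vec\ell\ne\vec 0$. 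Hence for all sufficiently large $J$ the entire sum over $\vec\ell\ne\vec 0$ is identically zero on the relevant domain, and $L_J(f)$ equals just the $\vec\ell=\vec 0$ term, whose limit is $\|f\|^2$ by the previous paragraph.

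Finally I would assemble the pieces: for general $f\in L^2(\R^d)$ and $\varepsilon>0$, pick $f_\rho$ with $\widehat{f_\rho}$ compactly supported and $\|f-f_\rho\|<\varepsilon$; using the uniform Bessel bound $L_J(g)\le M\|g\|^2$ (valid for all $g$ and all $J$, inherited from the frame estimates established via Propositions \ref{prop:telleskope} and \ref{prop:go0}) together with $\sqrt{L_J(\cdot)}$ being a seminorm, estimate $|\sqrt{L_J(f)}-\sqrt{L_J(f_\rho)}|\le \sqrt{M}\,\varepsilon$, combine with $L_J(f_\rho)\to\|f_\rho\|^2$ and $\big|\|f_\rho\|-\|f\|\big|<\varepsilon$, and let $\varepsilon\to0$. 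The one delicate point to get right is the justification of the uniform Bessel bound $L_J(f)\le M\|f\|^2$ — it is what makes the density reduction legitimate — but this follows from $|\widehat\varphi|\le(2\pi)^{-d/2}$ together with the standard computation of $\sum_{\vec\ell}|\langle h,T_{\vec\ell}\varphi\rangle|^2$ via the periodization $\sum_{\vec k}\widehat\varphi(\vt+2\pi\vec k)\overline{\widehat\varphi(\vt+2\pi\vec k')}$, which is bounded because $\varphi$ has compact support (Proposition \ref{prop:compactsupport}) so only finitely many shifts overlap.
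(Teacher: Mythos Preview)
Your proposal is correct and follows essentially the same approach as the paper: split $L_J(f)$ via Lemma~\ref{lem:identitylj} into the diagonal term $U_J(f)$ (your $\vec\ell=\vec 0$ piece) and the off-diagonal $V_J(f)$, handle $U_J$ by dominated convergence using $|\widehat\varphi|\le(2\pi)^{-d/2}$ and $\widehat\varphi(\vec 0)=(2\pi)^{-d/2}$, kill $V_J(f_\rho)$ for $J$ large by support disjointness of $\widehat{f_\rho}(\vt)$ and $\widehat{f_\rho}(\vt-2\pi(A^\tau)^J\vec\ell)$, and close with a density argument using a uniform Bessel bound $L_J(g)\le M\|g\|^2$. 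The paper obtains that Bessel bound (inequality~\eqref{eq:x}) directly in the time domain from the compact support of $\varphi$, inside the proof of Proposition~\ref{prop:go0}, rather than via periodization or frame estimates; apart from that bookkeeping detail the two arguments coincide.
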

(For the proof, see Appendix \textsc{Proof of Proposition \ref{prop:isnorm}}.)\\

\underline{}\section{Summary and Iteration}

Let $B$ be a $d\times d$ expansive integral matrix with $\det{B}=\pm2$ and $f$ be a function in
$L^2(\R^d).$ Denote
\begin{eqnarray*}
V^{(0)}_B (f) &\equiv& \overline{\textrm{span}}(\{T_{\vec{\ell}}f ,\vec{\ell} \in \Z^d\}).\\
  V^{(n)}_B (f) &\equiv& D_B^nV^{(0)}_B (f), n\in\Z.
\end{eqnarray*}

Let $A_0$ be an $d\times d$ integral expansive matrix with $\det A_0 = \pm 2.$ By the Partition Theorem (Theorem \ref{theom:that}) $A_0 = S^{-1} A S $ for some  $d\times d$ integral matrix $S$ with $\det S = \pm 1$ and there are two vectors $\vec{\ell}_A, \vec{q}_A \in \Z^d$ with properties $(1)\textrm{-}(5)$ in Theorem \ref{theom:that}. By the construction in Sections $\S$\ref{ss:s2}-$\S$\ref{ss:psi} we obtained scaling function $\varphi_A$ and Parseval's frame wavelet in $\psi_A$ $L^2(\R^d)$.

Define
 $\varphi_{A_0}$ and $\psi_{A_0}$ by
 $\varphi_{A_0}(\vt) \equiv (U_S \varphi_A)(\vt) = \varphi_A(S\vt)$
 and
  $\psi_{A_0}(\vt) \equiv (U_S \psi_A)(\vt) = \psi_A(S\vt)$.
 We have
\begin{theorem}
\label{theom:mra}
\begin{enumerate}
  \item The function $\psi_{A_0} $ is a Parseval's frame wavelet associated with matrix $A_0.$
  \item
 \begin{eqnarray}
   \varphi_{A_0} &=& \sum_{\vec{n}\in \Lambda_0} h_{\vec{n}} D_{A_0} T_{S^{-1} \vec{n}} \varphi_{A_0}. \\
    \psi_{A_0} &=& \sum_{\vec{\ell}_A-\vec{n}\in\Lambda_0}
 (-1)^{\vec{q}_A \circ \vec{n}} \overline{h_{\vec{\ell}_A-\vec{n}}} D_{A_0}T_{S^{-1}\vec{n}} \varphi_{A_0} .
\end{eqnarray}
where $\{h_{\vec{n}}, \vec{n} \in \Lambda_0 \}$ is the finite solution to the Lawton's system of equations associated with matrix $A;$ vectors $\{\ell_A, \vec{q}_A\}$ and the $d\times d$ matrix $S$ are as defined in the Partition Theorem (Theorem \ref{theom:that}).
\item
\begin{equation*}
V^{(n)}_{A_0} (\varphi_{A_0})\subset V^{(n+1)}_{A_0} (\varphi_{A_0}), \ n \in \Z.
\end{equation*}
\item
\begin{equation*}
\overline{\bigcup_{n\in\Z} V^{(n)}_{A_0} (\varphi_{A_0})} = L^2(\R^d).
\end{equation*}
\end{enumerate}
\end{theorem}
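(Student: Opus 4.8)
The plan is to obtain (1)--(3) by pushing the corresponding facts for the matrix $A$ through the unitary $U_S$, and then to read off the density statement (4) from the wavelet frame property proved in (1). Throughout I will use the intertwining identities of Lemma \ref{lem:noproof}: because $A_0 = S^{-1}AS$ with $|\det(S)|=1$, one has $U_S D_A^J U_S^{-1}=D_{A_0}^J$ and $U_S T_{\vec{n}} U_S^{-1}=T_{S^{-1}\vec{n}}$, hence $U_S D_A T_{\vec{n}} U_S^{-1}=D_{A_0}T_{S^{-1}\vec{n}}$, together with $T_{\vec{\ell}}D_{A_0}=D_{A_0}T_{A_0\vec{\ell}}$ (Equation \eqref{eq:R}). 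Statement (1) is then immediate: $\psi_A$ is a Parseval's frame wavelet associated with $A$ by Theorem \ref{theom_frame} (recall $A$, being integrally similar to the expansive $A_0$, is itself expansive), so Theorem \ref{theom:redsym}, applied with this $A$, this $S$, and $B=A_0$, gives that $\psi_{A_0}=U_S\psi_A$ is a Parseval's frame wavelet associated with $A_0$. For (2) I apply $U_S$ to the two scaling equation \eqref{eq:2rel} for $\varphi_A$ and to Definition \ref{def:psi} for $\psi_A$; since $U_S\varphi_A=\varphi_{A_0}$ and $U_S\psi_A=\psi_{A_0}$, the generic term $h_{\vec{n}}D_A T_{\vec{n}}\varphi_A$ turns into $h_{\vec{n}}D_{A_0}T_{S^{-1}\vec{n}}\varphi_{A_0}$, which is exactly the asserted pair of identities, the finite summation ranges being unaffected.

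For (3), since $D_{A_0}$ is unitary it suffices to establish $V^{(0)}_{A_0}(\varphi_{A_0})\subset V^{(1)}_{A_0}(\varphi_{A_0})$ and then apply $D_{A_0}^n$. Fix $\vec{\ell}\in\Z^d$. Combining (2) with $T_{\vec{\ell}}D_{A_0}T_{S^{-1}\vec{n}}=D_{A_0}T_{A_0\vec{\ell}+S^{-1}\vec{n}}$ gives $T_{\vec{\ell}}\varphi_{A_0}=\sum_{\vec{n}\in\Lambda_0}h_{\vec{n}}D_{A_0}T_{A_0\vec{\ell}+S^{-1}\vec{n}}\varphi_{A_0}$, a finite combination of vectors $D_{A_0}T_{\vec{m}}\varphi_{A_0}$ with $\vec{m}=A_0\vec{\ell}+S^{-1}\vec{n}\in\Z^d$ (both $A_0$ and $S^{-1}$ being integral), hence $T_{\vec{\ell}}\varphi_{A_0}\in V^{(1)}_{A_0}(\varphi_{A_0})$; since $V^{(1)}_{A_0}(\varphi_{A_0})$ is closed, $V^{(0)}_{A_0}(\varphi_{A_0})\subseteq V^{(1)}_{A_0}(\varphi_{A_0})$. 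The same identity $T_{\vec{\ell}}D_{A_0}=D_{A_0}T_{A_0\vec{\ell}}$, used inside $V^{(1)}_{A_0}(\varphi_{A_0})=D_{A_0}V^{(0)}_{A_0}(\varphi_{A_0})$ together with the $\Z^d$-translation invariance of $V^{(0)}_{A_0}(\varphi_{A_0})$ and the fact $A_0\vec{\ell}\in\Z^d$, shows that each $V^{(n)}_{A_0}(\varphi_{A_0})$ with $n\geq 0$ is invariant under the translations $T_{\vec{\ell}}$, $\vec{\ell}\in\Z^d$; I record this for use in (4).

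For (4), note first that by (2) the function $\psi_{A_0}$ is a finite combination of vectors $D_{A_0}T_{\vec{m}}\varphi_{A_0}$, $\vec{m}\in\Z^d$, so $\psi_{A_0}\in V^{(1)}_{A_0}(\varphi_{A_0})$. By the $\Z^d$-translation invariance just noted, $T_{\vec{\ell}}\psi_{A_0}\in V^{(1)}_{A_0}(\varphi_{A_0})$ for every $\vec{\ell}\in\Z^d$, whence $D_{A_0}^nT_{\vec{\ell}}\psi_{A_0}\in D_{A_0}^n V^{(1)}_{A_0}(\varphi_{A_0})=V^{(n+1)}_{A_0}(\varphi_{A_0})$ for all $n\in\Z$, $\vec{\ell}\in\Z^d$. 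Consequently every element of the Parseval frame $\{D_{A_0}^nT_{\vec{\ell}}\psi_{A_0}\}$ lies in $W\equiv\overline{\bigcup_{m\in\Z}V^{(m)}_{A_0}(\varphi_{A_0})}$, which is a closed subspace since by (3) the spaces $V^{(m)}_{A_0}(\varphi_{A_0})$ form a nested family. If $f\in W^{\perp}$, then $\langle f,D_{A_0}^nT_{\vec{\ell}}\psi_{A_0}\rangle=0$ for all $n,\vec{\ell}$, so $\|f\|^2=\sum_{n\in\Z,\vec{\ell}\in\Z^d}|\langle f,D_{A_0}^nT_{\vec{\ell}}\psi_{A_0}\rangle|^2=0$ by (1) and \eqref{eq:framew}; hence $W^{\perp}=\{\vec{0}\}$ and $W=L^2(\R^d)$.

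The only step that is not routine bookkeeping with the relations of Lemma \ref{lem:noproof} is the choice of strategy for (4): instead of deducing density of $\bigcup_m V^{(m)}_{A_0}(\varphi_{A_0})$ from the behaviour of $\widehat{\varphi_{A_0}}$ near the origin, as in the classical MRA argument, the plan exploits that the wavelet $\psi_{A_0}$ already built is simultaneously a Parseval frame generator and an element of $V^{(1)}_{A_0}(\varphi_{A_0})$, so that (4) collapses to the one-line orthogonality computation above. I expect no serious obstacle beyond keeping careful track of which quantities are integral in the translation manipulations underlying (3).
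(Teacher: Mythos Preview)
Your proposal is correct and follows essentially the same route as the paper: both derive (1) from Theorem~\ref{theom_frame} via the unitary $U_S$, obtain (2) by conjugating the scaling/wavelet relations \eqref{eq:2rel} and \eqref{eq:defpsi} by $U_S$, and prove (4) by the same mechanism---observing that $\psi$ and all its translates lie in $V^{(1)}$, so the full Parseval frame sits inside $\bigcup_n V^{(n)}$, forcing density. The only organizational difference is that the paper first proves the nesting (3) and the density (4) for the matrix $A$ and then transports these statements to $A_0$ via the identity $U_S V^{(n)}_A(\varphi_A)=V^{(n)}_{A_0}(\varphi_{A_0})$, whereas you establish (2) first and then argue (3) and (4) directly on the $A_0$ side; this is a cosmetic reordering, not a different idea.
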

\begin{proof} By Equations \eqref{eq:2rel} and \eqref{eq:defpsi} we have
\begin{eqnarray*}
 \varphi_A &=& \sum_{\vec{n}\in \Z^d} h_{\vec{n}} D_AT_{\vec{n}} \varphi_A. \\
 \psi_A &=& \sum_{\vec{n}\in \Z^d} (-1)^{\vec{q}_A \circ \vec{n}} \overline{h_{\vec{\ell}_A-\vec{n}}} D_AT_{\vec{n}} \varphi .
\end{eqnarray*}
This implies that for any $\vec{k}\in\Z^d,$
\begin{eqnarray*}
T_{\vec{k}} \varphi_A &=&T_{\vec{k}} D_A\sum_{\vec{n}\in \Z^d } h_{\vec{n}}T_{\vec{n}} \varphi_A
=D_A \sum_{\vec{n}\in \Z^d} h_{\vec{n}}T_{\vec{n}+A\vec{k}} \varphi_A \in V_A ^{(1)}
(\varphi_A)
\end{eqnarray*}
So we have
\begin{equation*}
  V_A ^{(0)} (\varphi_A) \subset V_A ^{(1)} (\varphi_A)
  \textrm{ and } \psi_A \in V_A ^{(1)} (\varphi_A).
\end{equation*}
Hence
\begin{equation*}
    V_A ^{(n)} (\varphi_A) \subset V_A ^{(n+1)} (\varphi_A) , \forall n\in\Z,\textrm{ and }\{T_{\vec{k}} \psi_A, \vec{k}\in\Z^d \} \subset V_A ^{(1)} (\varphi_A).
\end{equation*}
Therefore, we have
\begin{equation*}
  \{D_A ^m T_{\vec{k}} \psi_A, \vec{k}\in\Z^d \} \subset V_A ^{(1)} (\varphi_A), \forall m\in\Z, m\leq 0.
\end{equation*}
Apply $D_A ^n$ to the two sides and take union, we obtain,
\begin{equation*}
\label{eq:mra3}
   \{D_A ^n T_{\vec{k}} \psi_A, n\in\Z, \vec{k}\in\Z^d\} \subset \bigcup_{n\in\Z} V_A ^{(n)} (\varphi_A).
\end{equation*}
This equation together with Theorem \ref{theom_frame} Equation \eqref{eq:psieq2} we get
$\overline{\bigcup_{n\in\Z} V^{(n)}_A (\varphi_A)} = L^2(\R^d).$

Next, notice that $\Lambda_0$ is the finite support of the solution $\{h_{\vec{n}}\}$ to the Lawton's system of equations \eqref{eq:lawtoneq} associated with matrix $A$, by Equations \eqref{eq:2rel}, \eqref{eq:defpsi},  \eqref{eq:P} and \eqref{eq:Q}, we have
\begin{eqnarray*}
   \varphi_{A_0} &=& U_S \varphi_A = U_S \sum_{\vec{n}\in \Z^d} h_{\vec{n}} D_AT_{\vec{n}} \varphi_A \\
  &=& \sum_{\vec{n}\in \Z^d} h_{\vec{n}}  (U_S D_A U_S ^{-1}) (U_S T_{\vec{n}} U_S ^{-1}) (U_S \varphi_A )\\
   &=&\sum_{\vec{n}\in \Z^d} h_{\vec{n}} D_{A_0} T_{S^{-1} \vec{n}} \varphi_{A_0} \\
   &=&\sum_{\vec{n}\in \Lambda_0} h_{\vec{n}} D_{A_0} T_{S^{-1} \vec{n}} \varphi_{A_0}.   \\
    \psi_{A_0} &=&  U_S \psi_A
    =U_S \sum_{\vec{n}\in\Z^d}
 (-1)^{\vec{q}_A \circ \vec{n}} \overline{h_{\vec{\ell}_A-\vec{n}}} D_AT_{\vec{n}} \varphi\\
     &=& \sum_{\vec{n}\in\Z^d} (-1)^{\vec{q}_A \circ \vec{n}} \overline{h_{\vec{\ell}_A-\vec{n}}} (U_S D_A U_S ^{-1})(U_S T_{\vec{n}} U_S ^{-1}) (U_S \varphi_A )\\
    &=& \sum_{\vec{\ell}_A-\vec{n}\in\Lambda_0}
 (-1)^{\vec{q}_A \circ \vec{n}} \overline{h_{\vec{\ell}_A-\vec{n}}} D_{A_0}T_{S^{-1}\vec{n}} \varphi_{A_0} .
\end{eqnarray*}

Since $U_S$ is a unitary operator. It will map a normalize tight frame into a normalized tight frame.
We have
$U_S \{D_A ^n T_{\vec{k}} \psi_A, n\in\Z, \vec{k}\in\Z^d\} = \{D_{A_0} ^n T_{\vec{k}} \psi_{A_0}, n\in\Z, \vec{k}\in\Z^d\}.$ So $\psi_{A_0}$ is a Parseval's frame wavelet. It is also true that $U_S V^{(n)}_{A}(\varphi_A) = V^{(n)}_{A_0}(\varphi_{A_0}), \forall n\in\Z.$ So we have (3) and (4).
\end{proof}

Let $f_0(\vt)$ be a bounded function in $L^2(\R^d)$ which is contiguous at $\vec{0}$ and $f_0(\vec{0})=1.$
Define
\begin{eqnarray*}
\label{def:phi_kj}
    g_0 (\vx) &\equiv& \frac{1}{(2\pi)^{d/2}} \cdot
    f_0 (\vx),\\
    g_k (\vx) &\equiv& \frac{1}{(2\pi)^{d/2}} \cdot
    f_0 ((A^\tau)^{-k}\vx) \cdot
    \prod_{j=1}^{k} m_0((A^\tau)^{-j}\vx), \ \forall k \geq 1,
\end{eqnarray*}
and
\begin{equation*}
    \varphi_k \equiv \F^{-1} g_k, \forall k\geq 0.
\end{equation*}
Since $\lim_k f_0((A^\tau)^{-k}\vx)$ is converging to the constant function $1$ uniformly on any given bounded region of $\R^d$ and
$\prod_{j=1}^{k} m_0((A^\tau)^{-j}\vx)$ is also converging uniformly on any given bounded region of $L^2 (\R^d)$ (see the proof of Proposition \ref{prop:entire}),
we have
$\lim g_k =g$ and
$\lim \varphi_k = \varphi.$  The convergence is in $L^2(\R^d)$-norm.
We have
\begin{eqnarray*}
  \widehat{\varphi}_{k+1}(\vs)
  &=&g_{k+1}(\vs)\\
  &=&  m_0 ((A^\tau)^{-1} \vs) \cdot \frac{1}{(2\pi)^{d/2}} \cdot f_0 ((A^\tau)^{-(k+1)}\vs) \cdot
  \prod_{j=2}^{k+1}m_0((A^{\tau})^{-j} \vs)\\
  &=& m_0 ((A^\tau)^{-1} \vs) \cdot g_k((A^\tau)^{-1} \vs \ )\\
  &=&
  \frac{1}{\sqrt{2}}\sum_{\vec{n}\in\Lambda_0} h_{\vec{n}} e^{-i\vec{n}\circ (A^\tau)^{-1}\vs} g_k((A^\tau)^{-1} \vs \ )\\
  &=&
  \sum_{\vec{n}\in\Lambda_0} h_{\vec{n}} \widehat{T}_{A^{-1}\vec{n}}\widehat{D}_A g_k(\vs),\textrm{ by \eqref{eq:multiplier} and   \eqref{eq:3.7} } \\
  &=&
  \sum_{\vec{n}\in\Lambda_0} h_{\vec{n}} \widehat{D}_A\widehat{T}_{\vec{n}} \widehat{\varphi}_k(\vs), \textrm{by \eqref{eq:R}.}
\end{eqnarray*}
Hence, we have

\begin{equation}
\label{eq:iteratefinal_k}
  \varphi_{k+1} = \sum_{\vec{n}\in\Lambda_0} h_{\vec{n}} D_AT_{\vec{n}} \varphi_k, \ k=1,2,\cdots,
\end{equation}
and
\begin{equation}
\label{eq:iteratefinal_0}
  \varphi_1 = \sum_{\vec{n}\in\Lambda_0} h_{\vec{n}} D_AT_{\vec{n}} \varphi_0.
\end{equation}
Let $\Psi$ be the linear operator
\begin{equation}\label{eq:Psi}
  \Psi \equiv \sum_{\vec{n}\in\Lambda_0} h_{\vec{n}} D_AT_{\vec{n}}.
\end{equation}
The above discussion proves that if $\widehat{\varphi}_0$ is a bounded $L^2(\R^d)$-function which is continuous at $\vec{0}$ with
value $1,$
then the sequence $\{ \Psi ^k \varphi_0 \}$ converges to the scaling function $\varphi$ in $L^2(\R^d)$-norm.

\begin{theorem}
\label{theom:iterate}
We have
\begin{equation}
\label{eq:iteratelimit}
\lim_{k\rightarrow\infty} \Psi^k \chi_{[0,1)^d} = \varphi
\end{equation}
the limit converges in $L^2(\R^d)$-norm.
\end{theorem}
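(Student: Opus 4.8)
The plan is to obtain \eqref{eq:iteratelimit} as a special case of the general fact established just before this theorem: if $f_0$ is a bounded $L^2(\R^d)$-function continuous at $\vec{0}$ with $f_0(\vec{0})=1$, and $\varphi_0\equiv\F^{-1}\big(\tfrac{1}{(2\pi)^{d/2}}f_0\big)$, then $\Psi^k\varphi_0=\varphi_k\to\varphi$ in $L^2(\R^d)$-norm. Hence it suffices to exhibit one such $f_0$ whose associated initial function $\varphi_0$ is precisely $\chi_{[0,1)^d}$; no new estimate is required, since the analytic content (uniform convergence of $g_k$ on bounded regions, hence $L^2$-convergence of $\varphi_k=\F^{-1}g_k$) was already carried out above.

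First I would set $f_0\equiv(2\pi)^{d/2}\,\F\chi_{[0,1)^d}$, so that by definition $\F^{-1}\big(\tfrac{1}{(2\pi)^{d/2}}f_0\big)=\chi_{[0,1)^d}$. Computing the transform gives
\[
  f_0(\vs)=\int_{[0,1)^d}e^{-i\vs\circ\vt}\,d\vt=\prod_{j=1}^{d}\frac{1-e^{-is_j}}{is_j},
\]
with the $j$-th factor read as $1$ when $s_j=0$. I would then check the three hypotheses: (i) boundedness, using $\frac{1-e^{-is_j}}{is_j}=e^{-is_j/2}\,\frac{2\sin(s_j/2)}{s_j}$, whose modulus is at most $1$, so $|f_0|\leq 1$; (ii) continuity on all of $\R^d$, the apparent singularities at $s_j=0$ being removable, and in particular $f_0(\vec{0})=1$; (iii) $f_0\in L^2(\R^d)$, which is immediate from $\chi_{[0,1)^d}\in L^1(\R^d)\cap L^2(\R^d)$ and the unitarity of $\F$ (or concretely from $\int_{\R}\tfrac{4\sin^2(s/2)}{s^2}\,ds=2\pi$ and Tonelli's theorem).

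With this $f_0$ the preceding discussion applies verbatim: \eqref{eq:iteratefinal_k} and \eqref{eq:iteratefinal_0} give $\varphi_k=\Psi^k\varphi_0=\Psi^k\chi_{[0,1)^d}$, while $g_k\to g$ uniformly on every bounded region of $\R^d$ --- because $f_0((A^\tau)^{-k}\vx)\to 1$ uniformly on bounded sets (since $(A^\tau)^{-k}\vx\to\vec{0}$ there) and $\prod_{j=1}^{k}m_0((A^\tau)^{-j}\vx)$ converges uniformly on bounded sets, as shown in the proof of Proposition \ref{prop:entire} --- and this forces $\varphi_k\to\varphi$ in $L^2(\R^d)$-norm. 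That is exactly \eqref{eq:iteratelimit}.

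I do not expect a genuine obstacle; the only points needing care are bookkeeping ones. One must confirm that, for the specific choice $\varphi_0=\chi_{[0,1)^d}$, the iterates $\F^{-1}g_k$ delivered by the recursion really coincide with $\Psi^k\chi_{[0,1)^d}$ (immediate from \eqref{eq:iteratefinal_k}), and one must keep the normalization straight: the condition ``continuous at $\vec{0}$ with value $1$'' is $f_0(\vec{0})=1$, equivalently $(2\pi)^{d/2}(\F\varphi_0)(\vec{0})=1$, which indeed holds since $(\F\chi_{[0,1)^d})(\vec{0})=(2\pi)^{-d/2}$.
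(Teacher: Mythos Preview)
Your proposal is correct and follows essentially the same approach as the paper: set $\varphi_0=\chi_{[0,1)^d}$, compute $f_0=(2\pi)^{d/2}\F\varphi_0$ as a product of one-variable sinc-type functions, verify it is bounded, continuous at $\vec{0}$ with value $1$ (and in $L^2$), and then invoke the general convergence result established immediately before the theorem. Your write-up is in fact slightly more detailed than the paper's, which checks only boundedness and continuity at $\vec{0}$.
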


\begin{proof}
Let $\varphi_0 \equiv \chi_{[0,1)^d}.$ All we need to prove is that $\F \varphi_0 = \frac{1}{(2\pi)^{d/2}}\cdot f_0$
for some function $f_0$ and this $f_0$ is bounded in $\R^d$ and continuous at $\vec{0}.$ We have
\begin{eqnarray*}
  \F \varphi_0 (\vs) &=& \frac{1}{(2\pi)^{d/2}}
  \int_{\R^d}e^{-i\vs \circ\vt}\chi_{[0,1)^d}d\vt \\
    &=& \frac{1}{(2\pi)^{d/2}}
  \int_{[0,1)^d}e^{-i\vs \circ\vt}d\vt \\
  &=& \frac{1}{(2\pi)^{d/2}}
  \prod_{j=1} ^d \big(\frac{e^{is_j}-1}{s_j}\big)
\end{eqnarray*}
The function $\frac{e^{is_j}-1}{s_j}$ is continuous at $s_j=0$ if we define its value by $1$. Also, it is bounded when $s_j$ is real numbers. This proves this theorem.

\end{proof}

\section{Examples}\label{ss:examples}

In this section we will use our methods developed in this paper to construction some examples of wavelet functions in $L^2(\R^3)$.

\textsc{Example 1.} In this example we will construct a Haar wavelet in $L^2(\R^3)$ associated with matrix
\begin{equation*}
A_0 \equiv\left[\begin{array}{ccc} 0 & 1 & 0\\  0 & 0 & 1 \\2 & 0 & 0 \end{array}\right].
\end{equation*}
We have $A_0= S^{-1}AS$ where
\begin{equation*}
A \equiv\left[\begin{array}{ccc} 0 & 2 & -1\\  0 & 0 & 1 \\ 1 & 1 & 0 \end{array}\right] \textrm{ and }
S \equiv\left[\begin{array}{ccc} -1 & 0 & 1 \\  1 & 0 & 0 \\0 & 1 & 0 \end{array}\right].
\end{equation*}
Then $\det (A)=2$ with eigenvalues
$\{\sqrt[3]{2}e^{^{\frac{ik\pi}{3}}},k=0,1,2. \}.$
The matrix $A$ is expansive. We have
\begin{equation*}
A \Z^3 = \big\{ \alpha \left[\begin{array}{ccc} 0 \\  0 \\ 1 \end{array}\right]
+  \beta \left[\begin{array}{ccc}1 \\  1 \\ 0 \end{array}\right] + (2\Z)^3 , \alpha,\beta\in\Z \big\}
= A^\tau \Z^3.
\end{equation*}
Let
\begin{equation*}
    \vec{\ell}_A \equiv \left[\begin{array}{ccc} 1 \\  0 \\0\end{array}\right], \
\vec{q}_A \equiv \left[\begin{array}{cc} 1 \\  1 \\ 0 \end{array}\right].
\end{equation*}
The vectors $\vec{\ell}_A , \vec{q}_A$ and matrix $A$ have the properties (1)-(5) in the Partition Theorem.
In this example we assume that the only non zero elements for $h_{\vec{n}}$ are
at
\begin{equation*}
\vec{n}_0 = \left[\begin{array}{cc} 0 \\  0 \\ 0\end{array}\right]
\textrm{ and } \vec{n}_1 =
\left[\begin{array}{cc} 1 \\  0 \\ 0\end{array}\right]\in \vec{\ell}_A + A\Z^3.
\end{equation*}
So the product $h_{\vec{n}_0} \overline{h}_{\vec{n}_1}$ is not in any of the Equations \eqref{eq:lawtoneq}.  The reduced system of Lawton's Equations is
\begin{equation*}
\left\{\begin{array}{l}
h_{\vec{n}_0} ^2 + h_{\vec{n}_1} ^2 =1\\
h_{\vec{n}_0}  + h_{\vec{n}_1} =\sqrt{2}.
\end{array}\right.
\end{equation*}
The system has one solution $h_{\vec{n}_0}= h_{\vec{n}_1} = \frac{\sqrt{2}}{2}.$
The two scaling relation equation \eqref{eq:2rel} is
\begin{equation}
\label{eq:fixed point}
\varphi_A  = \frac{\sqrt{2}}{2}D_A(I+T_{\vec{n}_1})\varphi_A.
\end{equation}

By Theorem \ref{theom:mra}
\begin{equation*}
\label{eq:fixed point}
\varphi_{A_0}  =  U_S \varphi_A
=  \frac{\sqrt{2}}{2}D_{A_0}(I+T_{S^{-1}\vec{n}_1})\varphi_{A_0}
= \frac{\sqrt{2}}{2}D_{A_0}(I+T_{\vec{e}_3})\varphi_{A_0}.
\end{equation*}
Notice that we have
$(I+T_{\vec{e}_3})\chi_{_{[0,1)^3}}=\chi_{_{[0,1)^2 \times [0,2)}}$
and $\frac{\sqrt{2}}{2}D_{A_0}\chi_{_{[0,1)^2 \times [0,2)}} =
\chi_{_{[0,1)^3}}.$  The function $\chi_{_{[0,1)^3}}$ is the scaling function $\varphi_{A_0}.$ Then the related normalized tight frame (orthogonal) wavelet is
\begin{equation*}
\psi_{A_0} = \chi_{_{Q^+}}-\chi_{_{Q^-}},\textrm{ with }
Q^+ \equiv \chi_{_{[0, 0.5)\times [0,1)^2}}\textrm{ and }
Q^- \equiv \chi_{_{[0.5,1)\times [0,1)^2}}
\end{equation*}
This is a Haar wavelet in $L^2(\R^3).$ By this method, we can find examples of Haar wavelets in any dimension.

\textsc{Example 2.} Let
\begin{equation*}
   A\equiv  \left[
      \begin{array}{ccc}
        -2 & 1 & -2 \\
        1  & 0 & 0 \\
        2  & 0 & 2 \\
      \end{array}
    \right],
    \vec{\ell}_A \equiv \left[
                                   \begin{array}{c}
                                     0 \\
                                     0 \\
                                     1 \\
                                   \end{array}
                                 \right]\textrm{ and }
    \vec{q}_A \equiv \left[
                                   \begin{array}{c}
                                     0 \\
                                     0 \\
                                     1 \\
                                   \end{array}
                                 \right].
\end{equation*}
It is clear that $\det(A) =-2.$ Also, we have
\begin{equation*}
    A\Z^3 = \big\{ \alpha \vec{e}_1
+  \beta \vec{e}_2 + (2\Z)^3 , \alpha,\beta\in\Z \big\}
= A^\tau \Z^3.
\end{equation*}
The vectors $\vec{\ell}_A, \vec{q}_A$ and matrix $A$ satisfy the properties (1)-(5) in the Partition Theorem (Theorem \ref{theom:that}). So, if we have a finite solution to the the Equations \eqref{eq:lawtoneq}, we will have a Parseval's frame wavelet associated with matrix $A$. In this example we assume
$    \Lambda_0 \equiv \big\{ \vec{n} = \alpha \vec{e}_1
+  \beta \vec{e}_2 + \gamma \vec{e}_3 , \ \alpha =0, 1,2,3, \ \beta=0,1, \ \gamma = 0,1, \big\}.$
The corresponding reduced Lawton's system of equations is
\begin{equation*}
\label{eq:lawtoneq3d}
\left\{
    \begin{array}{llc}
      \sum_{\vec{n} \in \Lambda_0} h_{\vec{n}} ^2 &= &1 ,\\
      \sum_{k=0} ^3 (h_{k, 0, 0} \cdot  h_{(1+k),0, 0}+ h_{k, 0, 1} \cdot  h_{(1+k),0, 1}+ h_{k, 1, 0} \cdot  h_{(1+k),1, 0}+ h_{k, 1, 1} \cdot  h_{(1+k),1, 1})&= &0 ,\\
      \sum_{k=0} ^2 (h_{k, 0, 0} \cdot  h_{(2+k),0, 0}+ h_{k, 0, 1} \cdot  h_{(2+k),0, 1}+ h_{k, 1, 0} \cdot  h_{(2+k),1, 0}+ h_{k, 1, 1} \cdot  h_{(2+k),1, 1})&= &0 ,\\
      \sum_{k=0} ^1 (h_{k, 0, 0} \cdot  h_{(3+k),0, 0}+ h_{k, 0, 1} \cdot  h_{(3+k),0, 1}+ h_{k, 1, 0} \cdot  h_{(3+k),1, 0}+ h_{k, 1, 1} \cdot  h_{(3+k),1, 1})&= &0 ,\\
      \sum_{k=0} ^0 (h_{k,0,0} \cdot  h_{(3+k),1,0}+h_{k,0,1} \cdot  h_{(3+k),1,1})                                   &= &0 ,\\
      \sum_{k=0} ^1 (h_{k,0,0} \cdot  h_{(2+k),1,0}+h_{k,0,1} \cdot  h_{(2+k),1,1})                                   &= &0 ,\\
      \sum_{k=0} ^2 (h_{k,0,0} \cdot  h_{(1+k),1,0}+h_{k,0,1} \cdot  h_{(1+k),1,1})                                   &= &0 ,\\
      \sum_{k=0} ^3 (h_{k,0,0} \cdot  h_{k,1,0}+h_{k,0,1} \cdot  h_{k,1,1})                                   &= &0 ,\\
      \sum_{k=1} ^3 (h_{k,0,0} \cdot  h_{(k-1),1,0}+h_{k,0,1} \cdot  h_{(k-1),1,1})                                   &= &0 ,\\
      \sum_{k=2} ^3 (h_{k,0,0} \cdot  h_{(k-2),1,0}+h_{k,0,1} \cdot  h_{(k-2),1,1})                                   &= &0 ,\\
      \sum_{k=3} ^3 (h_{k,0,0} \cdot  h_{(k-3),1,0}+h_{k,0,1} \cdot  h_{(k-3),1,1})                                   &= &0 ,\\
      \sum_{\vec{n} \in \Lambda_0} h_{\vec{n}}                                     &= &\sqrt{2}.\\
    \end{array}
    \right.
\end{equation*}

In the Table \ref{tb:2sol} we have two sets of solutions. The solutions satisfies the equations
with errors less than $10^{-10}$.
\begin{table}[ht]
\center
\begin{tabular}{|l|l|l||r|r|}
\hline
$\alpha$ & $\beta$ & $\gamma$ & Solution set $1$ \hspace{.8cm}    & Solution set $2$ \hspace{.8cm} \\
\hline
$0$ & $0$ & $0$ & 0.00000000000000003754 & -0.00000000000000000294  \\
\hline
$1$ & $0$ & $0$ & 0.08378339374280850000 & 0.03292120287539430000  \\
\hline
$2$ & $0$ & $0$ & 0.49453510790101500000 & -0.13290357845020300000  \\
\hline
$3$ & $0$ & $0$ & 0.00000000000000024969 & 0.00000000000000017890  \\
\hline
$0$ & $1$ & $0$ & 0.00000000000000002218 & 0.00000000000000004947  \\
\hline
$1$ & $1$ & $0$ & 0.35330635188230000000 & 0.55716952051625900000  \\
\hline
$2$ & $1$ & $0$ & -0.22451807131547000000 & 0.24991965790058100000  \\
\hline
$3$ & $1$ & $0$ & 0.00000000000000011746 & -0.00000000000000000691  \\
\hline
$0$ & $0$ & $1$ & 0.00000000000000007270 & -0.00000000000000000396  \\
\hline
$1$ & $0$ & $1$ & 0.16226597620431900000 & 0.04430091724524290000  \\
\hline
$2$ & $0$ & $1$ & -0.25534514772672400000 & 0.09876422297993930000  \\
\hline
$3$ & $0$ & $1$ & -0.00000000000000012892 & -0.00000000000000013295  \\
\hline
$0$ & $1$ & $1$ & 0.00000000000000004295 & 0.00000000000000006657  \\
\hline
$1$ & $1$ & $1$ & 0.68425970262500800000 & 0.74976363753740400000  \\
\hline
$2$ & $1$ & $1$ & 0.11592624905984000000 & -0.18572201823152200000  \\
\hline
$3$ & $1$ & $1$ & -0.00000000000000006065 & 0.00000000000000000514  \\
\hline
\end{tabular}
\bigskip
\caption{Two solutions to Equations 
}\label{tb:2sol}
\end{table}


\bibliographystyle{amsplain}

\bigskip

\section{Appendix}

\textsc{Proof of Lemma \ref{lem:LV}}
\begin{proof}
(1) Right multiply some $S_i$ to $B$ to make the elements in the first row  of the product all non negative.
We denote the product by $B_1$, and the product of the $S_i$ we used by $U_1$ . $U_1$ is in $\mathfrak{M}$.
Note that $S_i^{-1} = S_i.$ We have
\begin{equation*}
    B=B_1 \cdot U_1.
\end{equation*}

(2)
Let $b_{1 j}$ be the elements of the first row of $B_1$. Since $B$ is non singular, the row has some positive terms.
Let $b_{1 j_1}$ be the smallest positive element in the row.
So we can further factor $B$ as following
\begin{equation*}
    B = B_2 \cdot U_2
\end{equation*}
where $B_2=B_1 I_{1 j_1}$ and $U_2=I_{1 j_1} U_1.$ Let $c_{1 j}$ be the first row of $B_2.$ Now $c_{1 1}$ is the smallest positive element in the row. $U_2$ is in $\mathfrak{M}.$
If for some $j_2>1$, $c_{1 j_2}>0,$ we factor further
\begin{equation*}
    B= B_3 \cdot U_3
\end{equation*}
where $B_3= B_2 (1-\Delta_{1 j_2})$ and $U_3=(1+\Delta_{1 j_2}) U_2.$ If $c_{1 j}=0, j\geq 2,$ we are done for this step.

(3) If needed, repeat above process (2), until we get factor $B=L_1 V_1,$
where $L_1=(\ell_{ij})$ is an integral matrix with $\ell_{1 1} >0$ and $\ell_{1,j} = 0, j>1.$ And $V_1$ is a finite product of elements from $\mathfrak{G}$ and $V_1\in \mathfrak{M}$.

(4) We can continue the above steps on the $(d-1)\times (d-1)$ matrix $(\ell_{ij})_{i,j\geq 2}.$ By induction we will have
\begin{equation*}
     B=LV,
\end{equation*}
where $V\in \mathfrak{M}$ and $L$ an integral lower triangular matrix with positive diagonals $\{\ell_{11},\cdots, \ell_{dd}\}$.
\end{proof}

\textsc{Proof of Lemma \ref{lem:see0}}
\begin{proof}
Let $B$ be an integral matrix with $\det B= \pm 1.$ By Lemma \ref{lem:LV}, we have $B=LV$
where $L = (\ell_{i j})$ is lower triangular. So $\ell_{i j}=0,$ if $i<j$ and $\ell_{i i} >0, i=1, \cdots d,$
and $\det(L) = \Pi \ell_{jj} =1.$ Therefore, every diagonal element has value $1.$

Let $i,j \leq d, i > j$. Assume $\ell_{i j} > 0.$ By Equation \eqref{eq:magic}, $\Delta_{i j}^n = 0, \forall n \geq 2.$
We have $(I-\Delta_{i j})^{\ell_{i j}}= I+ \sum_{m=1} ^{\ell_{i j}} (-1)^m  C_{\ell_{i j}}    ^m  (\Delta_{i j})^m
     =(I-\ell_{i j} \Delta_{i j}).$  So $(I-\Delta_{i j})^{\ell_{i j}}
     =(I-\ell_{i j} \Delta_{i j}).$ By Equation \eqref{eq:f2}, this equation is also valid for case $\ell_{i j}$. Hence we have
\begin{equation}
    (I\pm\Delta_{i j})^{\ell_{i j}}
     =(I\pm\ell_{i j} \Delta_{i j}).
\end{equation}
The inverse to $(I\pm\Delta_{i j})^{\ell_{i j}}$ are $(I\mp\ell_{i j} \Delta_{i j}).$

Right multiply $(I- \ell_{d j}  \Delta_{d j})$ to $L$ will subtract the $d^{\texttt{th}}$ column from the $j^{\texttt{th}}$ column in matrix $L$. Since $\ell_{d d}=1$, we have factorization This will result a $0$ value at $d j$ position of the product and all other
entries are remain unchanged. So,
\begin{eqnarray*}
  B &=& LV=\big( L  \Pi_{j<d} (I-\Delta_{d j})^{\ell_{d j}}\big) \big( \Pi_{j<d} (I+\Delta_{d j})^{\ell_{d j}} V \big)\\
   &=&
   \big( L  \Pi_{j<d} (I-\Delta_{d j})^{\ell_{d j}}
   \cdot
   \Pi_{j<d-1} (I-\Delta_{(d-1) j})^{\ell_{(d-1) j}} \cdots
   \Pi_{j<2} (I-\Delta_{2 j})^{\ell_{2 j}}
   \big)\cdot\\
   && \big( \Pi_{i=2} ^d \Pi_{j<i} (I+\Delta_{i j})^{\ell_{i j}} V \big)
\end{eqnarray*}
The product in the first big parentheses is $I$ since every $\ell_{i j}, i>j$ is changed into $0$.
By Lemma \ref{lem:LV}, $V$ is the finite product of elements from $\mathfrak{G}$, hence, $B$ can be factored
as products of elements from $\mathfrak{G}$.

\end{proof}

\textsc{Proof of Theorem \ref{theom:redsym}}.
\begin{proof}
We have   $B=S^{-1}AS, D_B=U_B=U_{S^{-1}AS} = U_{S} U_A U_{S^{-1}}= U_SD_A U_S^{-1}$.
Let $f\in L^2 (\R^d)$.
Since $\psi_A$ is a Parseval's frame wavelet associated with the matrix $A$, we have
\begin{eqnarray*}
  U_S ^{-1}  f &=& \sum _{n\in \Z, \vec{\ell}\in\Z^d}
  \langle   U_S ^{-1} f, D_A ^n  T_{\vec{\ell}} \psi_A \rangle D_A ^n  T_{\vec{\ell}} \psi_A.
  \end{eqnarray*}
Then
\begin{eqnarray*}
 f  &=& \sum _{n\in \Z, \vec{\ell}\in\Z^d}
 \langle   f, U_S D_A ^n  T_{\vec{\ell}} \psi_A \rangle
 U_S D_A ^n  T_{\vec{\ell}}\psi_A\\
&=& \sum _{n\in \Z, \vec{\ell}\in\Z^d}
 \langle   f, D_B ^n  T_{S^{-1}\vec{\ell}} \eta_B \rangle
 D_B ^n  T_{S^{-1}\vec{\ell}}\eta_B\\
 &=& \sum _{n\in \Z, \vec{\ell}\in S^{-1}\Z^d}
 \langle   f, D_B ^n  T_{\vec{\ell}} \eta_B \rangle
 D_B ^n  T_{\vec{\ell}}\eta_B.
 \end{eqnarray*}
Since $S$ is an integral matrix with  $|\det (S)|=1,$ we have $\Z^d = S \Z^d = S^{-1} \Z^d.$
So we have
\begin{eqnarray*}
f &=& \sum _{n\in \Z, \vec{\ell}\in\Z^d}
 \langle   f, D_B ^n  T_{\vec{\ell}} \eta_B \rangle
 D_B ^n  T_{\vec{\ell}}\eta_B.
\end{eqnarray*}

\end{proof}

\textsc{Proof of Proposition \ref{prop:filter}}.
\begin{proof} We have
\begin{eqnarray*}
& &|m_0(\vec{t})|^2+|m_0(\vec{t}+\pi\vec{q}_A)|^2
=  \frac{1}{2}\left| \sum_{\vec{m}\in\Z^d} h_{\vec{m}} e^{-i \vec{m}\circ\vec{t}}\right|^2 + \frac{1}{2}\left| \sum_{\vec{m}\in\Z^d} h_{\vec{m}} e^{-i \vec{m}\circ(\vec{t}+\pi\cdot \vec{q}_A)}\right|^2 \\
 &=& \frac{1}{2}\left[ \sum_{\vec{m}\in\Z^d,\vec{n}\in\Z^d} h_{\vec{m}}\overline{h_{\vec{n}}} e^{-i(\vec{m}-\vec{n})\circ\vec{t}} +\sum_{\vec{m}\in\Z^d,\vec{n}\in\Z^d} (-1)^{(\vec{m}-\vec{n})\circ \vec{q}_A} h_{\vec{m}}\overline{h_{\vec{n}}} e^{-i(\vec{m}-\vec{n})\circ\vec{t}}  \right] \\
 &=&
 \frac{1}{2}\left[
 \sum_{\vec{m}\in\Z^d,\vec{k}\in\Z^d}  h_{\vec{m}}\overline{h_{\vec{m}+\vec{k}}} e^{i\vec{k}\circ\vec{t}}+\sum_{\vec{m}\in\Z^d,\vec{k}\in\Z^d} (-1)^{-\vec{k}\circ \vec{q}_A} h_{\vec{m}}\overline{h_{\vec{m}+\vec{k}}} e^{i\vec{k}\circ\vec{t}}
 \right] \\
\end{eqnarray*}
Here $\vec{k}=\vec{m}-\vec{n}.$

By property \eqref{eq:(3)} in Partition Theorem,
$\vec{k}\circ \vec{q}_A$ is odd when $\vec{k}\in(\ell_A+A \Z^d).$ Terms $(-1)^{-\vec{k}\circ \vec{q}_A} h_{\vec{m}}\overline{h_{\vec{m}+\vec{k}}} e^{i\vec{k}\circ\vec{t}}$ in the second sum cancel terms
$ h_{\vec{m}}\overline{h_{\vec{m}+\vec{k}}} e^{i\vec{k}\circ\vec{t}}$ in the first sum. The term  $\vec{k}\circ \vec{q}_A$ is even when $\vec{k}\in A \Z^d.$ So by Lawton's equations \eqref{eq:lawtoneq} we have
\begin{eqnarray*}
&& |m_0(\vec{t})|^2+|m_0(\vec{t}+\pi\vec{q}_A)|^2
 =
 \sum_{\vec{m}\in\Z^d,\vec{k}\in A\Z^d}  h_{\vec{m}}\overline{h_{\vec{m}+\vec{k}}} e^{i\vec{k}\circ\vec{t}} \\
 &=&
 \sum_{\vec{k}\in A\Z^d} \left( \sum_{\vec{m}\in\Z^d}
  h_{\vec{m}}\overline{h_{\vec{m}+\vec{k}}} \right) e^{i\vec{k}\circ\vec{t}}
 =
 \sum_{\vec{k}\in A\Z^d} \delta_{\vec{0} \vec{k}} e^{i\vec{k}\circ\vec{t}}
 = 1.
\end{eqnarray*}
\end{proof}

\textsc{Proof of Lemma \ref{lem:gconverge}}.
\begin{proof}
For $z\in\C,$ define
\begin{equation}
v(z)=\left\{
\begin{array}{ll}
\frac{e^z-1}{z} \quad &z\neq0 \\
 1 \quad &z=0.
\end{array} \right.
\end{equation}
The function $v(z)$ is an entire function on $\C.$

By definition of $m_0$ we have
\begin{align*}
|d_j(\vx)| &= |m_0((A^\tau)^{-j}\vx)-1|\\
           &= \left|\frac{1}{\sqrt{2}}\sum_{\vec{n}\in\Z^d} h_{\vec{n}} e^{-i\vec{n}\circ (A^\tau)^{-j}\vx}-1 \right| \\
           &= \left|\frac{1}{\sqrt{2}}\sum_{\vec{n}\in\Lambda_0} h_{\vec{n}} (e^{-i\vec{n}\circ (A^\tau)^{-j}\vx}-1) \right| \\
           &= \left|\frac{1}{\sqrt{2}}\sum_{\vec{n}\in\Lambda_0} h_{\vec{n}} v(-i\vec{n}\circ(A^\tau)^{-j}\vx)[-i\vec{n}\circ (A^\tau)^{-j}\vx]\right|  \\
           &\leq \frac{1}{\sqrt{2}}\sum_{\vec{n}\in\Lambda_0} |h_n| |v(-i\vec{n}\circ(A^\tau)^{-j}\vx)|
           \cdot |-i\vec{n}\circ (A^\tau)^{-j}\vx|.
\end{align*}
The above estimate on $|d_j(\vx)|$ is bounded by $C_\Omega \cdot \frac{1}{\beta^j}$ for some constant $C_\Omega$
by the following obvious facts combined.
\begin{enumerate}
  \item $|h_n| \leq 1$.
  \item For each $n\in\Lambda_0,$ the function $|v(-i\vec{n}\circ(A^\tau)^{-j}\vx)|$ is continuous and assume its maximum on $\vx\in\Omega.$ Since $\Lambda_0$ is a finite set, $\max\{|v(-i\vec{n}\circ(A^\tau)^{-j}\vx)| , \vx\in\Omega, \vec{n}\in \Lambda_0\}$ is a finite number.
  \item For each $\vec{n}\in\Lambda_0, \vx\in\Omega,$
\begin{equation*}
  |-i\vec{n}\circ (A^\tau)^{-j}\vx|\leq \max \{|\vec{n}|\cdot |\vx|,\ \vx\in\Omega  \text{ \rm and }  \vec{n}\in\Lambda_0\} \cdot \frac{1}{\beta^j},
\end{equation*}
where $\max \{|\vec{n}|\cdot |\vx|,\ \vx\in\Omega  \text{ \rm and }  \vec{n}\in\Lambda_0\}$ is a finite number.
\end{enumerate}

\end{proof}

\textsc{Proof of Lemma \ref{lem:ineq}}.
\begin{proof}
Let $j\in\N, \vx\in \C^d$ and $(A^\tau)^{-j}\vx = \left(\begin{array}{c} \xi_1 \\ \cdots \\ \xi_d  \end{array}\right)\in\C^d.$
Define $ \vec{\ell}_{\vx} = \left(\begin{array}{c} \ell_1 \\ \cdots \\ \ell_d  \end{array}\right)\in\Z^d$ by
\begin{equation*}
\vec{\ell}_m =
\left\{
\begin{array}{rl}
-N_0,        & \text{if } \mathfrak{Im}(\xi_m ) \leq 0; \\
N_0,    & \text{if } \mathfrak{Im} (\xi_m ) > 0.
\end{array}
\right.  m=1,2, \cdots, d.
\end{equation*}
then
$\mathfrak{Im}\Big((\vec{n}-\vec{\ell}_{\vx}) \circ \big((A^\tau)^{-j} \vx\big)\Big)\leq 0$
for $\vec{n}\in\Lambda_0.$
Denote $B_0 \equiv 2\sqrt{d}N_0$.
It is clear that $|\vec{\ell}_\xi| \leq B_0$ and $|(\vec{n} - \vec{\ell}_{\vx})| \leq B_0, \forall n\in\Lambda_0.$
By inequality \eqref{eq:E} we have
\begin{eqnarray*}
 |e^{-i(\vec{n}-\vec{\ell}_{\vx}) \circ \big((A^\tau)^{-j} \vx\big)} -1|
 & \leq &
 \min (2,  |(\vec{n}-\vec{\ell}_{\vx}) \circ \big((A^\tau)^{-j} \vx\big)|),\forall \vec{n}\in\Lambda_0
\end{eqnarray*}
So, we have
\begin{equation}
\label{eq:XX}
|e^{-i(\vec{n}-\vec{\ell}_{\vx}) \circ \big((A^\tau)^{-j} \vx\big)} -1|
\leq \min \big(2, \frac{B_0 | \vx|}{\beta^j}\big) ,\forall \vec{n}\in\Lambda_0.\\
\end{equation}
We also have
\begin{eqnarray*}
 |e^{-i \vec{\ell}_{\vx} \circ \big((A^\tau)^{-j} \vx\big)}|
 &=& e^{\vec{\ell}_{\vx} \circ \big((A^\tau)^{-j} \mathfrak{Im}(\vx) \big)}\\
 &\leq& e^{|\vec{\ell}_{\vx}|  \|(A^\tau)^{-1}\|^j |\mathfrak{Im}(\vx)| }.
 \end{eqnarray*}
This implies
\begin{equation}
\label{eq:YY}
 |e^{-i \vec{\ell}_{\vx} \circ \big((A^\tau)^{-j} \vx\big)}|
 \leq \exp (\frac{B_0  |\mathfrak{Im}(\vx)|}{\beta^j} ).
\end{equation}
Since
\begin{eqnarray*}
m_0 \big( (A^\tau)^{-j} \vx \big)
            & =& e^{-i\vec{\ell}_{\vx} \circ \big((A^\tau)^{-j} \vx\big)}
          \Big( 1+  \sum_{\vec{n}\in\Lambda_0}\frac{1}{\sqrt{2}}h_{\vec{n}}
\big(e^{-i(\vec{n}-\vec{\ell}_{\vx}) \circ \big((A^\tau)^{-j} \vx\big)} -1\big)\Big),
\end{eqnarray*}
by \eqref{eq:XX} and \eqref{eq:YY} we obtain
\begin{eqnarray*}
|m_0 \big( (A^\tau)^{-j} \vx \big)|
&\leq& |e^{ -i\vec{\ell}_{\vx} \circ \big((A^\tau)^{-j} \vx\big)}|\cdot
          \big( 1+  \sum_{\vec{n}\in\Lambda_0}\frac{1}{\sqrt{2}}|h_{\vec{n}}|\cdot
|
e^{-i(\vec{n}-\vec{\ell}_{\vx}) \circ \big((A^\tau)^{-j} \vx\big)} -1 |\big) \\ 
&\leq& \exp (\frac{B_0  |\mathfrak{Im}(\vx)|}{\beta^j} )
\big( 1+  \frac{1}{\sqrt{2}}(2N_0+1)^d
\min (2, \frac{B_0 | \vx|}{\beta^j})\big) \\
&\leq&
\big( 1+  C_0
\min (1, \frac{| \vx|}{\beta^j})\big)
\exp (\frac{B_0  |\mathfrak{Im}(\vx)|}{\beta^j} ).
\end{eqnarray*}
where
$C_0 \equiv \max (\sqrt{2}(2N_0+1)^d, \frac{B_0}{\sqrt{2}}(2N_0+1)^d ).$

\end{proof}

\textsc{Proof of Proposition \ref{prop:compactsupport}}.
\begin{proof}
By Schwartz's Paley-Wiener Theorem, it suffices to prove that the function $g$ satisfies the inequality
\eqref{eq:ineqcpt}.

Let $\vx\in\R^d$. We assume $ \vx\neq \vec{0}.$ (The case $\vx = \vec{0}$ is trivial.)
Denote $B \equiv \sum_{j=1} ^\infty \frac{B_0}{\beta^j}.$ By Lemma \ref{lem:ineq} we have
 \begin{eqnarray*}
  |g(\vx)|
  &=& \big| \frac{1}{(2\pi)^{d/2}}\prod_{j=1}^{\infty}m_0((A^{\tau})^{-j} \vec{\xi})\big|\\
    &\leq& \frac{1}{(2\pi)^{d/2}} e^{B|\mathfrak{Im}(\vx)| }
\prod_{j=1}^{\infty}
  \big( 1+  C_0\min (1, \frac{|\vx|}{\beta^j})\big).
\end{eqnarray*}
On the other hand, the sequence $\{\beta^{j}\}$ is monotonically increasing to $+\infty$.
 Let $I_j \equiv [\beta^j,\beta^{j+1}), j\in\N$ and $I_0 \equiv (0,\beta).$
The set of intervals $\{ I_j, j\geq 0 \}$ is a partition of $(0,\infty).$
So $|\vx| \in I_{j_0}$ for some integer $j_0 \geq 0.$
We have
\begin{eqnarray*}
  (1+C_0)^{j_0} &=& (\beta^{j_0})^{ \log _{\beta} (1+C_0)}\\
&\leq& |\vx|^{\log _{\beta} (1+C_0)}\\
&\leq& (1+|\vx|)^N,
\end{eqnarray*}
where $N $ is the smallest natural number no less than $ \log _{\beta} (1+C_0).$ This is a constant related to $A$ and $N_0$ only.
So, we have
 \begin{eqnarray*}
  |g(\vx)|
  &\leq& \frac{1}{(2\pi)^{d/2}} (1+C_0)^{j_0} e^{B|\mathfrak{Im}(\vx)| }
\prod_{j=j_0+1}^{\infty}
  \big( 1+  C_0\min (1, \frac{| \vx|}{\beta^j})\big)\\
  &\leq& (1+|\vx|)^N e^{B|\mathfrak{Im}(\vx)| }\cdot \Big(\frac{1}{(2\pi)^{d/2}}
\prod_{j=j_0+1}^{\infty}
  \big( 1+  C_0\min (1, \frac{| \vx|}{\beta^j})\big)\Big).
\end{eqnarray*}
Since $|\vx|\in I_{j_0}=[\beta^{j_0},\beta^{j_0+1}), \frac{|\vx|}{\beta^{j_0+1}} < 1$.
We have
\begin{eqnarray*}
&& \frac{1}{(2\pi)^{d/2}}
\prod_{j=j_0+1}^{\infty}
  \big( 1+  C_0\min (1, \frac{| \vx|}{\beta^j})\big)\\
=&&
  \frac{1}{(2\pi)^{d/2}} \prod_{j=j_0+1}^{\infty}
  (1+C_0 \frac{| \vx|}{\beta^{j_0+1}} \cdot \frac{1}{\beta^{j-(j_0+1)}})\\
\leq&&
 \frac{1}{(2\pi)^{d/2}} \prod_{k=0}^{\infty}
  (1+   \frac{C_0}{\beta^{k}})\\
\leq&&
  \frac{1}{(2\pi)^{d/2}}
  e^{\sum \frac{C_0}{\beta^{k}}}.
\end{eqnarray*}
Denote $C \equiv \frac{1}{(2\pi)^{d/2}}
  e^{\sum \frac{C_0}{\beta^{k}}}.$
This is a constant decided by the matrix $A$.\\

Combining the above argument, we have the desired inequality
\begin{equation*}
|g(\vx)| \leq C (1+|\vx|)^N e^{B|\mathfrak{Im}(\vx)| }.
\end{equation*}
\end{proof}

\textsc{Proof of Proposition \ref{prop:telleskope}}.

\begin{proof}
By the equations (\ref{eq:2rel}) and  \eqref{eq:R} $T_{\vec{k}} D_A = D_A T_{A\vec{k}}$, we have
\begin{eqnarray*}
  I_{-1} &=&
  \sum_{\vec{k}\in\Z^d} \langle f, D^{-1} _A T_{\vec{k}}\varphi \rangle D_A ^{-1} T_{\vec{k}}\varphi \\
  &=&
  \sum_{\vec{k}\in\Z^d} \langle f, D_A ^{-1} T_{\vec{k}} \sum_{\vec{p}\in\Z^d} h_{\vec{p}} D_AT_{\vec{p}}\varphi \rangle D_A ^{-1} T_{\vec{k}} \sum_{\vec{q}\in\Z^d} h_{\vec{q}} D_AT_{\vec{q}}\varphi \\
    &=&\sum_{\vec{p}\in\Z^d} \sum_{\vec{q}\in\Z^d} \sum_{\vec{k}\in\Z^d}
 \overline{h_{\vec{p}}} h_{\vec{q}}\langle f,  T_{\vec{p}+A\vec{k}}\varphi \rangle   T_{\vec{q}+A\vec{k}}\varphi \\
     &=&\sum_{\vec{p}\in\Z^d} \sum_{\vec{q}\in\Z^d} \sum_{\vec{k}\in A\Z^d}
 \overline{h_{\vec{p}}} h_{\vec{q}}\langle f,  T_{\vec{p}+\vec{k}}\varphi \rangle   T_{\vec{q}+\vec{k}}\varphi . \\
 F_{-1}
& =& \sum_{\vec{k}\in\Z^d} \langle f, D_A ^{-1} T_{\vec{k}}\psi \rangle D_A ^{-1} T_{\vec{k}}\psi \\
& =& \sum_{\vec{k}\in\Z^d} \langle f, D_A ^{-1} T_{\vec{k}} \sum_{\vec{p}\in\Z^d} (-1)^{\vec{q}_A \circ \vec{p}} \overline{h_{\vec{\ell}_A-\vec{p}}} D_AT_{\vec{p}}\varphi \rangle D_A ^{-1} T_{\vec{k}} \sum_{\vec{q}\in\Z^d} (-1)^{\vec{q}_A \circ \vec{q}} \overline{h_{\vec{\ell}_A-\vec{q}}} D_AT_{\vec{q}}\varphi  \\
& =& \sum_{\vec{p}\in\Z^d}\sum_{\vec{q}\in\Z^d}\sum_{\vec{k}\in\Z^d} (-1)^{\vec{q}_A \circ (\vec{p}+\vec{q})}h_{\vec{\ell}_A-\vec{p}}
\overline{h_{\vec{\ell}_A-\vec{q}}}
\langle f,  T_{\vec{p}+A\vec{k}}\varphi \rangle   T_{\vec{q}+A\vec{k}}\varphi\\
& =& \sum_{\vec{p}\in\Z^d}\sum_{\vec{q}\in\Z^d}\sum_{\vec{k}\in A\Z^d} (-1)^{\vec{q}_A \circ (\vec{p}+\vec{q})}h_{\vec{\ell}_A-\vec{p}}
\overline{h_{\vec{\ell}_A-\vec{q}}}
\langle f,  T_{\vec{p}+\vec{k}}\varphi \rangle   T_{\vec{q}+\vec{k}}\varphi .
\end{eqnarray*}
Use substitutions $\vec{n} \equiv \vec{q}+ \vec{k}$ and $\vec{m} \equiv \vec{p} - \vec{q},$
we have $\vec{q} \equiv \vec{n}- \vec{k}$ and $\vec{p} \equiv \vec{m} + \vec{n} - \vec{k},$
then
\begin{eqnarray*}
I_{-1}
  &=& \sum_{(\vec{m},\vec{n})\in\Z^d\times \Z^d}  \big( \sum_{\vec{k}\in A\Z^d}
 \overline{h_{\vec{m}+\vec{n}-\vec{k}}} h_{\vec{n}-\vec{k}} \big) \langle f, T_{\vec{m}+\vec{n}}\varphi \rangle   T_{\vec{n}}\varphi.\\
F_{-1}
&=&  \sum_{(\vec{m},\vec{n})\in\Z^d\times \Z^d} \big( \sum_{\vec{k}\in A\Z^d}
(-1)^{\vec{q_A}\circ(\vec{m}+2\vec{n}-2\vec{k})}
h_{\vec{\ell}_A-\vec{m}-\vec{n}+\vec{k}} \overline{h_{\vec{\ell}_A-\vec{n}+\vec{k}}} \big)
\langle f,  T_{\vec{m}+\vec{n}}\varphi \rangle   T_{\vec{n}}\varphi\\
&=&  \sum_{(\vec{m},\vec{n})\in\Z^d\times \Z^d} \big( \sum_{\vec{k}\in A\Z^d}
(-1)^{\vec{q_A}\circ\vec{m}}
h_{\vec{\ell}_A-\vec{m}-\vec{n}+\vec{k}} \overline{h_{\vec{\ell}_A-\vec{n}+\vec{k}}} \big)
\langle f,  T_{\vec{m}+\vec{n}}\varphi \rangle   T_{\vec{n}}\varphi.
\end{eqnarray*}
Denote
\begin{eqnarray*}
  \alpha_{\vec{m},\vec{n}} &\equiv& \sum_{\vec{k}\in A\Z^d}
 \overline{h_{\vec{m}+\vec{n}-\vec{k}}} h_{\vec{n}-\vec{k}},\\
  \beta_{\vec{m},\vec{n}} &\equiv& \sum_{\vec{k}\in A\Z^d}
(-1)^{\vec{q}_A\circ \vec{m}}
h_{\vec{\ell}_A-\vec{m}-\vec{n}+\vec{k}} \overline{h_{\vec{\ell}_A-\vec{n}+\vec{k}}}.
\end{eqnarray*}
We have
\begin{equation}
\label{eq:-1}
    I_{-1} + F_{-1} =
    \sum_{(\vec{m},\vec{n})\in \Z^d \times \Z^d}
    \big( \alpha_{\vec{m},\vec{n}} + \beta_{\vec{m},\vec{n}} \big)
    \langle f,  T_{\vec{m}+\vec{n}}\varphi \rangle   T_{\vec{n}}\varphi.
\end{equation}

By Partition Theorem, Equations \eqref{eq:(1)} and \eqref{eq:(3)}, we have
\begin{equation}
\label{eq:sigma}
(-1)^{\vec{q}_A \circ \vec{m}}
    = \left\{
    \begin{array}{rl}
      1, & \vec{m} \in A\Z^d,  \\
      -1, & \vec{m} \in \vec{\ell}_A+A\Z^d.
    \end{array}
    \right.
\end{equation}

For $\vec{m}\in A\Z^d,$ by \eqref{eq:sigma} and Partition Theorem Equation \eqref{eq:partition} we have
\begin{eqnarray*}
   \alpha_{\vec{m},\vec{n}} + \beta_{\vec{m},\vec{n}}  &=&
   \sum_{\vec{k}\in A\Z^d}
 \overline{h_{\vec{m}+\vec{n}-\vec{k}}} h_{\vec{n}-\vec{k}} +
 \sum_{\vec{k}\in A\Z^d}
h_{\vec{\ell}_A-\vec{m}-\vec{n}+\vec{k}} \overline{h_{\vec{\ell}_A-\vec{n}+\vec{k}}}\\
&=&
   \sum_{\vec{\ell}\in \vec{n}-A\Z^d}
 \overline{h_{\vec{m}+\vec{\ell}}} h_{\vec{\ell}} +
 \sum_{\vec{\ell}\in \vec{\ell}_A-\vec{m}-\vec{n}+A\Z^d}
h_{\vec{\ell}} \overline{h_{\vec{m}+\vec{\ell}}}\\
     &=&
    \sum_{\vec{\ell}\in \Z^d}
 \overline{h_{\vec{m}+\vec{\ell}}} h_{\vec{\ell}}.
\end{eqnarray*}
Then by Lawton's equations \eqref{eq:lawtoneq} we obtain
\begin{equation}
\label{eq:first}
    \alpha_{\vec{m},\vec{n}} + \beta_{\vec{m},\vec{n}}
    =
   \delta_{\vec{m}, \vec{0}}, \ \vec{m} \in A\Z^d.
\end{equation}

For $\vec{m}\in \vec{\ell}_A+ A\Z^d,$ by \eqref{eq:sigma} and Partition Theorem Equation \eqref{eq:eq} we have
\begin{eqnarray*}
   \alpha_{\vec{m},\vec{n}} + \beta_{\vec{m},\vec{n}}  &=&
   \sum_{\vec{k}\in A\Z^d}
 \overline{h_{\vec{m}+\vec{n}-\vec{k}}} h_{\vec{n}-\vec{k}} -
 \sum_{\vec{k}\in A\Z^d}
h_{\vec{\ell}_A-\vec{m}-\vec{n}+\vec{k}} \overline{h_{\vec{\ell}_A-\vec{n}+\vec{k}}}\\
&=&
   \sum_{\vec{\ell}\in \vec{n}-A\Z^d}
 \overline{h_{\vec{m}+\vec{\ell}}} h_{\vec{\ell}} -
 \sum_{\vec{\ell}\in \vec{\ell}_A-\vec{m}-\vec{n}+A\Z^d}
h_{\vec{\ell}} \overline{h_{\vec{m}+\vec{\ell}}}\\
&=&
   \sum_{\vec{\ell}\in \vec{n}-A\Z^d}
 \overline{h_{\vec{m}+\vec{\ell}}} h_{\vec{\ell}} -
 \sum_{\vec{\ell}\in \vec{n}-A\Z^d}
h_{\vec{\ell}} \overline{h_{\vec{m}+\vec{\ell}}}\\
     &=&
    0.
\end{eqnarray*}
Or
\begin{equation}
\label{eq:second}
    \alpha_{\vec{m},\vec{n}} + \beta_{\vec{m},\vec{n}}
    =
   0, \vec{m} \in \vec{\ell}_A+A\Z^d.
\end{equation}
Combine \eqref{eq:-1}, \eqref{eq:first} and \eqref{eq:second} we obtain
\begin{equation*}
    I_{-1} + F_{-1} =
    \sum_{\vec{n}\in \Z^d}
    \langle f,  T_{\vec{n}}\varphi \rangle   T_{\vec{n}}\varphi.
\end{equation*}
This is
\begin{equation}
\label{eq:-1B}
    \sum_{\vec{k}\in\Z^d} \langle f, D^{-1} _A T_{\vec{k}}\varphi \rangle D_A ^{-1} T_{\vec{k}}\varphi +
    \sum_{\vec{k}\in\Z^d} \langle f, D^{-1} _A T_{\vec{k}}\psi \rangle D_A ^{-1} T_{\vec{k}}\psi
    =\sum_{\vec{n}\in \Z^d}
    \langle f,  T_{\vec{n}}\varphi \rangle   T_{\vec{n}}\varphi.
\end{equation}

After performing the following operations on equation \eqref{eq:-1B} we will have the desired
Equation \eqref{eq:T} in  this Propositon:

(1) Replace $f$ by $D^{-J-1}f$ in \eqref{eq:-1B} and then.

(2) Apply $D_A ^{J+1}$ to the two sides.

(3) Use the facts that
$\langle D_A ^{-J-1}f, D^{-1} _A T_{\vec{k}}\varphi \rangle=
\langle f, D^{J} _A T_{\vec{k}}\varphi \rangle,$
$\langle D_A ^{-J-1}f, D^{-1} _A T_{\vec{k}}\psi \rangle=
\langle f, D^{J} _A T_{\vec{k}}\psi \rangle$
and $\langle D_A ^{-J-1}f, T_{\vec{k}}\varphi \rangle=
\langle f, D^{J+1} _A T_{\vec{k}}\varphi \rangle$.

\end{proof}

\textsc{Proof of Proposition \ref{prop:go0}}.

\begin{proof}   (1)
By Proposition \ref{prop:compactsupport} the scaling function $\varphi$ has a compact support. Let $B$ be a natural number such that the set $[-B,B)^d$ contains the support of $\varphi.$
We will write
$E_0 \equiv [-\frac{1}{2},\frac{1}{2})^d$, $E_B \equiv [-B-\frac{1}{2},B+\frac{1}{2})^d$
and $\Lambda_B \equiv \Z^d \cap [-B,B]^d$. For $\vec{n}\in\Z^d,$ we have $\vec{n} = (2B+1)\vec{\ell}+\vec{d}, \ \vec{\ell}\in\Z^d, \ \vec{d}\in\Lambda_B.$ Here $\vec{\ell}$ and $\vec{d}\in\Lambda_B$ are uniquely determined  by $\vec{n}.$
We will denote $E_{\vec{\ell}, \vec{d}} \equiv E_B + (2B+1)\vec{\ell}+\vec{d}.$ We have the following partitions.

\begin{eqnarray*}
\Z^d &=& \bigcup _{\vec{d}\in\Lambda_B} \bigcup _{\vec{\ell}\in\Z^d}
\big((2B+1)\vec{\ell}+\vec{d}\big).\\
\R^d &=& \bigcup _{\vec{\ell}\in\Z^d}E_{\vec{\ell},\vec{d}},\text{ \rm for each fixed } \vec{d}\in\Lambda_B.
\end{eqnarray*}
So, for a fixed $\vec{d}\in\Lambda_B$ the functions $\chi_{E_{\vec{\ell},\vec{d}}} f,\vec{\ell} \in \Z^d$ have disjoint supports and sum to $f$, hence are orthogonal.
Then we have
\begin{eqnarray*}
L_0(f)
&=&
\sum_{\vec{d} \in \Lambda_B}\sum_{\vec{\ell} \in \Z^d}
|\langle f,T_{(2B+1)\vec{\ell}+\vec{d}}\varphi \rangle |^2
=
\sum_{\vec{d} \in \Lambda_B}\sum_{\vec{\ell} \in \Z^d}
|\langle \chi_{E_{\vec{\ell},\vec{d}}}f,T_{(2B+1)\vec{\ell}+\vec{d}}\varphi \rangle |^2\\
&\leq&
\sum_{\vec{d} \in \Lambda_B}\sum_{\vec{\ell} \in \Z^d} \|\chi_{E_{\vec{\ell},\vec{d}}}f\|^2 \cdot \|\phi\|^2
=(2B+1)^2 \|f\|^2 \|\phi\|^2.
\end{eqnarray*}
So we have
\begin{eqnarray*}
  L_J(f)
  &=&
    \sum_{\vec{\ell} \in \Z^d} |\langle (D_A ^J )^*f, T_{\vec{\ell}}\varphi \rangle |^2
   \leq
 (2B+1)^2 \|\varphi \|^2 \| (D_A ^J )^*f \|^2.
\end{eqnarray*}
This implies

\begin{equation}
\label{eq:x}
 L_J(f)
  \leq
  (2B+1)^2 \|\varphi \|^2 \| f \|^2, \forall J\in\Z
\end{equation}
and
\begin{equation}
\label{eq:y} \lim_{\rho\rightarrow\infty}\limsup_{J\rightarrow + \infty} L_J(f_{\overline{\rho}}) = 0.
\end{equation}

(2)
We have
\begin{eqnarray*}
L_{-J}(f)&=& \sum_{\vec{\ell} \in \Z^d} |\langle  f,D_A^{-J}T_{\vec{\ell} }\varphi \rangle |^2\\
&=&
\sum_{\vec{d} \in \Lambda_B} \sum_{\vec{\ell} \in \Z^d}
 |\langle  f,D_A^{-J}T_{(2B+1)\vec{\ell}+\vec{d} }\varphi \rangle |^2\\
&=&
\sum_{\vec{d} \in \Lambda_B} \sum_{\vec{\ell} \in \Z^d\backslash \{\vec{0}\}}
 |\langle  f,D_A^{-J}T_{(2B+1)\vec{\ell}+\vec{d} }\varphi \rangle |^2+
\sum_{\vec{d} \in \Lambda_B}
 |\langle  f,D_A^{-J}T_{\vec{d} }\varphi \rangle |^2.
\end{eqnarray*}
It is clear that $E_0 \subset E_B+\vec{d}$ and
$(E_B+(2B+1)\vec{\ell}+\vec{d})\cap E_0 = \emptyset, \forall \vec{\ell} \in \Z^d\backslash \{\vec{0}\}$.
The support of the function $D_A^{-J}T_{\vec{\ell} }\varphi $ is contained in
$A^J (E_B+\vec{\ell}).$ We have
\begin{eqnarray*}
&&\sum_{\vec{\ell} \in \Z^d\backslash \{\vec{0}\}}
 |\langle  f,D_A^{-J}T_{(2B+1)\vec{\ell}+\vec{d} }\varphi \rangle |^2 \\
=&&\sum_{\vec{\ell} \in \Z^d\backslash \{\vec{0}\}}
 |\langle \chi_{A^J(E_B+(2B+1)\vec{\ell}+\vec{d}) } f,D_A^{-J}T_{(2B+1)\vec{\ell}+\vec{d} }\varphi \rangle |^2 \\
 \leq&&
 \|\phi\|^2  \cdot \sum_{\vec{\ell} \in \Z^d\backslash \{\vec{0}\}}
\int_{A^J(E_B+(2B+1)\vec{\ell}+\vec{d}) }  |f |^2 d \mu  \\
\leq&&
\|\varphi\|^2  \cdot \int_{\R^d\backslash A^J E_0}  \left| f \right|^2 d \mu.
\end{eqnarray*}
Since $A$ is expansive, $\lim _{J\rightarrow+\infty}A^J E_0 =\R^d,$ $\lim_{J\rightarrow+\infty} \int_{\R^d\backslash A^J E_0}  \left| f \right|^2 d \mu =0.$
So
\begin{equation*}
\lim_{J\rightarrow+\infty} \sum_{\vec{d} \in \Lambda_B} \sum_{\vec{\ell} \in \Z^d\backslash \{\vec{0}\}}
 |\langle  f,D_A^{-J}T_{(2B+1)\vec{\ell}+\vec{d} }\varphi \rangle |^2
=0.
\end{equation*}

(3) To complete the proof of this Proposition, we need to show that
\begin{equation}
\lim_{J\rightarrow+\infty} \sum_{\vec{d} \in \Lambda_B}
 |\langle  f,D_A^{-J}T_{\vec{d} }\varphi \rangle |^2 =0.
\end{equation}
Let $f_N\equiv \chi_{[-N,N]^2} \cdot f.$ Let $\varepsilon>0,$ and choose $N\in\N$ be large
such that $\|f-f_N\|\leq \frac{\varepsilon}{2\|\varphi\|}.$ Then we have
$|\langle  f,D_A^{-J}T_{\vec{d} }\varphi \rangle|\leq |\langle  f_N,D_A^{-J}T_{\vec{d} }\varphi \rangle|+\frac{\varepsilon}{2}.$ Since
\begin{eqnarray*}
|\langle  f_N,D_A^{-J}T_{\vec{d} }\varphi \rangle | &=& |\langle  D_A^{J}f_N,T_{\vec{d} }\varphi \rangle | \\
&=& |\langle  \chi_{A^{-J} [-N,N]^2} D_A^{J}f_N,T_{\vec{d} }\varphi \rangle | \\
&=& |\langle  D_A^{J}f_N,\chi_{A^{-J} [-N,N]^2} T_{\vec{d} }\varphi \rangle |,
\end{eqnarray*}
we have
\begin{eqnarray*}
|\langle  f_N,D_A^{-J}T_{\vec{d} }\varphi \rangle |
&\leq&
\|D_A^{J}f_N\|  \cdot \sqrt{\int_{A^{-J} [-N,N]^d}\left|T_{\vec{d} }\varphi\right|^2d\mu}.
\end{eqnarray*}
since $\lim_{J\rightarrow \infty} \mu(A^{-J} [-N,N]^d)=0$ and $\|D_A^{J}f_N\| \leq \|f\|,$ so $\lim_{J\rightarrow+\infty}|\langle  f,D_A^{-J}T_{\vec{d} }\varphi \rangle |^2 =0$ for each $\vec{d}\in \Lambda_B.$ Since $\Lambda_B$ is a finite set, we have
\begin{equation*}
    \lim_{J\rightarrow+\infty} \sum_{\vec{d} \in \Lambda_B}
 |\langle  f,D_A^{-J}T_{\vec{d} }\varphi \rangle |^2 =0.
\end{equation*}

\end{proof}

\textsc{Proof of Proposition \ref{prop:isnorm}}
\begin{proof}
We denote
\begin{eqnarray*}
  U_J (f) &\equiv&
  (2\pi)^d
\int_{\R^d} |\widehat{f}(\vs)|^2 |\widehat{\varphi}((A^\tau)^{-J}\vs) |^2 d\vs,\\
  V_J (f) &\equiv&
  (2\pi)^d
\int_{\R^d}\sum_{\vec{\ell} \in \Z^d\backslash\{\vec{0}\}}\Big(\widehat{f}(\vs) \cdot
\overline{\widehat{f}(\vs-2\pi(A^\tau)^{J}\vec{\ell})} \cdot
\widehat{\varphi}((A^\tau)^{-J}\vs-2\pi\vec{\ell})\cdot
\overline{\widehat{\varphi}((A^\tau)^{-J}\vs)}
\Big) d\vs.
\end{eqnarray*}
By Lemma \ref{lem:identitylj}, we have
$L_J(f) = U_J (f) + V_J (f).$ It is enough to prove that
\begin{equation}
\label{eq:r}
\lim_{J\rightarrow +\infty} U_J (f) = \| f \| ^2
\end{equation}
and
\begin{equation}
\label{eq:s}
\lim_{J\rightarrow +\infty} V_J (f) = 0.
\end{equation}

1. Recall that $A^\tau$ is also expansive, so $\lim_{J\rightarrow +\infty}(A^\tau)^{-J}\vs=\vec{0}, \forall \vs\in\R^d.$
Also, by definition \eqref{def:phi}, Corollary \ref{less1} and Proposition \ref{prop:entire}, $g$ is bounded and continuous on $\R^d$. Also, $(2\pi)^{\frac{d}{2}} \widehat{\varphi} (\vec{0}) = (2\pi)^{\frac{d}{2}} g(\vec{0})=1.$ By Lebesgue Dominate Convergence Theorem we have
\begin{eqnarray*}
  \lim_{J\rightarrow +\infty} U_J (f) &=&
  \lim_{J\rightarrow +\infty}
(2\pi)^d  \int_{\R^d}|\widehat{f}(\vs)|^2 \cdot
|\widehat{\varphi}((A^\tau)^{-J}\vs)|^2
d\vs\\
&=&   \lim_{J\rightarrow +\infty}
  \int_{\R^d}|\widehat{f}(\vs)|^2 \cdot
|(2\pi)^{\frac{d}{2}}g((A^\tau)^{-J}\vs)|^2
d\vs\\
&=&  \|\widehat{f}\|^2
=  \|f\|^2.
\end{eqnarray*}
This proves \eqref{eq:r}.

2. Let $\rho\in\R^+,$ $\Delta_\rho$ be the open ball with center $\vec{0}$ and radius $\rho.$
Since $A^\tau$ is expansive, $\beta\equiv \|(A^\tau)^{-1}\|^{-1}>1.$ Denote $a\equiv \log_\beta(2\rho).$ Let  $J_{\rho}$ be the smallest natural number in the interval
$(a,+\infty).$
 When $J\geq J_\rho,$  $(A^\tau)^J \Delta_1$ contains an open ball $\Delta_{2\rho}.$
 Since $\Delta_1\cap \Z^d =\{\vec{0}\},$
$2\pi (A^\tau)^J\Delta_1\cap 2\pi (A^\tau)^J\Z^d =\{\vec{0}\}.$ Also we have $\Delta_{2\rho} \subseteq (A^\tau)^J\Delta_1\subseteq 2\pi (A^\tau)^J\Delta_1.$
These facts implies that when $J\geq J_\rho$ the distance between $\vec{0}$ and $2\pi (A^\tau)^J \vec{\ell}$ is greater than $2\rho.$
So for each $\vec{\ell}\in\Z^d\backslash\{\vec{0}\}$, the support of $\widehat{f_\rho}(\vt)$ which is $\Delta_\rho$ and the  support of $\widehat{f_\rho}(\vt-2\pi (A^\tau)^J\vec{\ell})$
which is $\Delta_\rho+2\pi (A^\tau)^J\vec{\ell}$ are disjoint.
This implies that the product
$\widehat{f_\rho}(\vt)\overline{\widehat{f_\rho}(\vt-2\pi (A^\tau)^J\vec{\ell})}\equiv 0$
when $J\geq J_\rho.$ Therefore, we have
\begin{equation*}
    \lim_{J\rightarrow+\infty} V_J (f_\rho) = 0,  \forall \rho\in\R^+.
\end{equation*}
Together with \eqref{eq:r}, we have proved that
\begin{equation}
\label{eq:i}
\lim _{J\rightarrow + \infty} L_J(f_\rho) = \|f_\rho\|^2,
\forall f\in L^2(\R^d),\forall\rho\in\R^+.
\end{equation}

3. Let $D_\rho \equiv \sum_{\vec{\ell}\in\Z^d} \big(\langle f_\rho,D_A ^J T_{\vec{\ell}}\varphi \rangle
  \overline{\langle f_{\overline{\rho}} ,D_A ^J T_{\vec{\ell}}\varphi \rangle}+
  \langle f_{\overline{\rho}} ,D_A ^J T_{\vec{\ell}}\varphi \rangle
  \overline{\langle f_\rho ,D_A ^J T_{\vec{\ell}}\varphi \rangle}\big).$
  Then
  \begin{eqnarray*}
   | D_\rho | &\leq& 2\sum_{\vec{\ell}\in\Z^d} | \langle f_\rho,D_A ^J T_{\vec{\ell}}\varphi \rangle| \cdot
  |\langle f_{\overline{\rho}} ,D_A ^J T_{\vec{\ell}}\varphi \rangle| \\
  &\leq&
  2\sqrt{\sum_{\vec{\ell}\in\Z^d} | \langle f_\rho,D_A ^J T_{\vec{\ell}}\varphi \rangle|^2} \cdot
  \sqrt{\sum_{\vec{\ell}\in\Z^d}|\langle f_{\overline{\rho}} ,D_A ^J T_{\vec{\ell}}\varphi \rangle|^2}\\
  &=&
  2 \sqrt{L_J(f_\rho)} \cdot \sqrt{L_J(f_{\overline{\rho}})}.
  \end{eqnarray*}
By \eqref{eq:x}, we have

\begin{equation}
\label{eq:haha}
 | D_\rho | \leq  2 (2B+1)^2 \|\varphi\|^2 \|f_\rho\| \|f_{\overline{\rho}}\|.
\end{equation}

We have
\begin{eqnarray*}
  L_J(f)-\|f\|^2
  &=& L_J(f_\rho + f_{\overline{\rho}}) -\|f\|^2 \\
  &=& \sum_{\vec{\ell}\in\Z^d} \langle f_\rho + f_{\overline{\rho}} ,D_A ^J T_{\vec{\ell}}\varphi \rangle
  \overline{\langle f_\rho + f_{\overline{\rho}} ,D_A ^J T_{\vec{\ell}}\varphi \rangle}-\|f\|^2\\
  &=& L_J(f_\rho )-\|f\|^2 + L_J(f_{\overline{\rho}})+D_\rho\\
  &=& \big(L_J(f_\rho )-\|f_\rho\|^2\big) -\|f_{\overline{\rho}}\|^2 + L_J(f_{\overline{\rho}})+D_\rho.
\end{eqnarray*}
By \eqref{eq:i}, \eqref{eq:y} and \eqref{eq:haha} we have       
  \begin{eqnarray*}
&&\limsup_{J\rightarrow + \infty}\big| L_J(f)-\|f\|^2\big|\leq 0+ \|f_{\overline{\rho}}\|^2 + 0 +
2 (2B+1)^2 \|\varphi\|^2 \|f_\rho\| \|f_{\overline{\rho}}\|, \forall \rho\in\R^+.
\end{eqnarray*}
Let $\rho\rightarrow+\infty,$
we have \eqref{eq:s}
\begin{eqnarray*}
  \lim_{J\rightarrow \infty} L_J (f)  &=& \|f\|^2.
\end{eqnarray*}

\end{proof}

\end{document}